\documentclass[11pt]{article}

\makeatletter
\let\@fnsymbol\@arabic
\makeatother

\usepackage{appendix}
\usepackage{booktabs}

\usepackage{graphicx}
\usepackage{makecell}
\usepackage{amsmath}
\usepackage{amsfonts}
\usepackage{amssymb}
\usepackage{epsfig}

\usepackage{amsmath}
\usepackage{amsfonts}
\usepackage{amssymb}
\usepackage{dsfont}
\usepackage{mathrsfs}
\usepackage{bbm}
 \usepackage{hyperref}
\usepackage{amsmath}
\usepackage{amsfonts}
\usepackage{amssymb}
\usepackage{multirow}
\usepackage{mathrsfs}
\usepackage[dvipsnames,usenames]{color}

\usepackage{subcaption}
\usepackage{caption}
\usepackage{svg}
\usepackage{diagbox}

\usepackage{natbib}

\usepackage{enumerate}

\renewcommand{\Box}{\framebox{\rule{0.3em}{0.0em}}}

\newtheorem{theorem}{Theorem}[section]
\newtheorem{theorem*}{Theorem}[subsubsection]
\newtheorem{lemma}{Lemma}[section]
\newtheorem{proposition}{Proposition}[section]

\newtheorem{remark}{Remark}[section]
\newtheorem{specification}{Model specification}[section]
\newtheorem{definition}{Definition}[section]

\newtheorem{assumption}{Assumption}[section]
\newtheorem{corollary}{Corollary}[section]

\newcommand{\setd}{{ d \kern -.15em l}}
\newcommand{\hatsetd}{ d \hat{\kern -.15em l }}
\newcommand{\dd}{\mathsf {d\kern -0.07em l}} 

\newcommand{\bgeqn}{\begin{eqnarray}}
\newcommand{\edeqn}{\end{eqnarray}}
\newcommand{\bgeq}{\begin{eqnarray*}}
\newcommand{\edeq}{\end{eqnarray*}}
\newcommand{\bec}{\begin{center}}
\newcommand{\enc}{\end{center}}
\newcommand{\R}{{\rm I\!R}}

\newcommand{\inmat}[1]{\mbox{\rm {#1}}}

\newcommand{\be}{\begin{equation}}
\newcommand{\ee}{\end{equation}}

\def\e{\epsilon}

\def\bbe{{\Bbb{E}}} 

\renewcommand{\Box}{\hfill \rule{2.3mm}{2.3mm}}

\numberwithin{equation}{section}

\title{
Risk-Averse Bayesian Games with an Unknown Type Distribution: Bayesian Learning, Equilibrium Analysis, and Finite-Sample Guarantees
}

\author{
Yuan Tao\thanks{Department of Systems Engineering and Engineering Management, The Chinese University of Hong Kong. Email: ytao@se.cuhk.edu.hk
}
\,\,and\,\,
Huifu Xu\thanks{Department of Systems Engineering and Engineering Management, The Chinese University of Hong Kong. Email: hfxu@se.cuhk.edu.hk.}
}

\begin{document}

\maketitle

\begin{abstract}
Classical Bayesian games assume that the joint distribution of players' types is common knowledge, an assumption that rarely holds in practical applications.
We address this issue by studying a group of myopic players who repeatedly interact under the Bayesian Nash conjecture and learn a parametric joint type distribution from type profiles observed over time within a Bayesian learning framework.
Players may hold heterogeneous priors and may be risk averse toward both epistemic uncertainty about the distributional parameter and aleatoric uncertainty in their rivals' types.
We propose two models: a Bayesian Nash equilibrium among risk-averse Bayesian learners (BNE-RABL), in which epistemic and aleatoric risks are evaluated separately, and a BNE based on Bayesian predictive distributions (BNE-BPD), in which the two sources of uncertainty are integrated into a Bayesian predictive distribution and evaluated through a single risk measure.
Under suitable conditions, we establish the existence and uniqueness 
of both equilibria,
and derive 
non-asymptotic convergence rates of the equilibrium sequences toward the corresponding oracle BNE under the true type distribution as the game is repeatedly played and more type profiles are observed.
These results further show that the discrepancy between the BNE-RABL and BNE-BPD strategies vanishes as the sample size grows, which illustrates that the two models can be regarded as effective approximations to each other.
We apply the proposed models to a price competition problem and numerically illustrate the theoretical results.
\end{abstract}

 \textbf{Keywords.} Bayesian learning,
 Bayesian Nash equilibrium, 
 Risk-averse players, 
 Non-asymptotic convergence, 
 Bayesian predictive distribution, 
 Price competition

\section{Introduction}
Bayesian games are a class of Nash games where  players have their own private information characterized by
type parameters.
Specifically, each player knows its own type but does not have complete information about the types of other players. 
For example, in a price competition problem, each firm typically has more accurate information about its own cost than about the costs of its competitors.
Accordingly, the type of each firm is naturally taken to represent its loss parameter \citep{ui2016bayesian,liu2025bayesian}.
Since each player chooses an optimal action after observing its own type, equilibrium strategies are necessarily type-contingent. A Bayesian Nash equilibrium (BNE) is a profile of such type-contingent strategies in which no player can improve its expected payoff through a unilateral deviation \citep{harsanyi1967games}.
Over the past few decades, the BNE framework has been  widely used in operations research and management science to model and analyze strategic interactions under incomplete information. 
Recent applications include price competition \citep{liu2025bayesian,du2026each},
inventory management \citep{kock2026deep,cui2024supply},
blockchain management \citep{chen2025bayesian},
auctions and contests \citep{pieroth2025deep,su2025existence}.

A central assumption in the classical BNE framework is that the joint probability distribution of the type parameters is common knowledge. However, this assumption may be restrictive in many data-driven settings due to limited information availability. First, the true joint distribution may be unknown, or the information required to specify it may be insufficient \citep{jackson1997social}. When the same strategic environment is encountered repeatedly and realized type profiles become observable, players can gradually learn the underlying distribution from accumulated data. This observation motivates a framework in which the type distribution is learned through repeated interaction and the progressive acquisition of information.
Second, players may enter the game with different information and subjective assessments, leading to heterogeneous prior beliefs about the unknown distributional parameters \cite{dekel2004learning}. 
This consideration suggests a relaxation of the common-prior assumption underlying classical Bayesian games, 
under which the true type distribution is initially unknown and players hold heterogeneous subjective priors.
Related work has likewise questioned the descriptive plausibility and necessity of common priors in incomplete-information environments  \citep{gul1998comment,kojevnikov2025existence}.

To address these challenges, we investigate a sequence of repeatedly played Bayesian games 
in which the unknown joint distribution of type parameters is parametric 
and the players learn the joint distribution through repeated interaction: 
initially, each player starts with a prior belief over the parameters of the joint distribution;
in every round, a player updates its belief using the type profiles observed in previous rounds and then selects an action under the Bayesian–Nash conjecture given its current type;
at the end of the round, the realized type profile is publicly revealed and is used in the next update.
We model the players as 
Bayesian learners
whose prior beliefs (and hence posterior beliefs) 
are
heterogeneous
to capture the differences in their available information and subjective judgment.
To avoid introducing an additional hierarchy of beliefs,
we take the players' prior/posterior beliefs and their risk attitudes to be common knowledge.

In the learning process described above,
players face two sources of uncertainty in each round.
The first is the aleatoric uncertainty inherent in the rivals' types, which is irreducible in the following sense: 
even if the true distribution of type parameters were known, a player would still not observe its rivals' realized types when choosing its action. 
The second is the epistemic uncertainty about the true type distribution itself, which stems from incomplete information and, unlike aleatoric uncertainty, is learnable: 
it diminishes as more samples of type profiles are observed \citep{hullermeier2021aleatoric}.
These two uncertainties may affect decisions in different ways. 
If a rival's type represents, say, its production cost or technology level, then an unfavorable realization can directly reduce a player's current payoff even when the distribution is fully known. 
By contrast, uncertainty about the distribution itself reflects a lack of statistical information about the environment and tends to be less important as more data accumulate. 
In the Bayesian learning framework, 
it is typically assumed that the true joint distribution of the type parameters belongs to a known parametric family which captures the aleatoric uncertainty, 
and that each player forms their own prior and posterior distributions over the parameters which capture the epistemic uncertainty.

Motivated by the discussion above, we first consider the equilibrium model which preserves this hierarchical structure of aleatoric and epistemic uncertainty explicitly.
The players are risk-averse and adopt different risk attitudes toward the aleatoric and epistemic uncertainties. 
Consequently, each player's objective function takes the form of nested risk measures: 
the inner risk measure evaluates the risk arising from aleatoric uncertainty under a given type distribution,
and the outer risk measure assesses the risk from epistemic uncertainty over 
the set of all plausible 
type distributions.
We term the equilibrium induced by this objective the Bayesian Nash equilibrium among risk-averse Bayesian learners (BNE-RABL).
Risk-averse games 
have been 
well studied,
see, e.g., \cite{pang2017two,su2025continuous}.
These works assume that the true probability distribution of the underlying uncertainty is known, while we consider the case with an unknown true distribution and employ nested risk measures to deal with epistemic and aleatoric uncertainty.
Such formulations with nested risk measures are common in the recent Bayesian optimization literature, see e.g., \cite{wu2018bayesian,lin2022bayesian,shapiro2026episodic,ma2024bayesian,milz2026stochastic}.
In contrast to these studies,
however, our inner risk measure for the aleatoric uncertainty is evaluated with respect to the conditional distribution of the rivals' type parameters, 
which yields a more complex uncertainty structure and incurs a substantial simulation cost when evaluating each player's objective function.

To simplify the structure of uncertainty within the Bayesian learning framework, 
we alternatively consider the equilibrium model where players combine the underlying type distribution and the prior/posterior distribution over the parameters 
into a Bayesian predictive distribution (BPD) \citep{geweke2010comparing,ghahramani2015probabilistic}. 
The BPD captures the aleatoric and epistemic uncertainties simultaneously, and
each player applies a single risk measure to it.  
We refer to the equilibrium induced by this formulation as the Bayesian Nash equilibrium based on Bayesian predictive distributions 
(BNE-BPD). 
Nevertheless,
the BNE-BPD and the BNE-RABL generally differ, and this discrepancy persists even 
when all risk measures are expectations.
This modeling approach, however, has received little attention in the classical Bayesian optimization literature.
Therefore, it is crucial to explore the properties of the BNE-RABL and BNE-BPD models and the relationship between them.

We emphasize that our analysis focuses on the myopic setting,
in which players' actions do not influence the realization of future type parameters and each player optimizes only its current-stage payoff. 
This setting differs from much of the existing literature on learning in repeated games, where players maximize discounted intertemporal payoffs and may strategically influence future information and play \citep{dekel2004learning,noguchi2015bayesian,norman2022possibility,mertikopoulos2016learning}. 
This literature typically studies how players learn their rivals' beliefs and strategies from their own payoff feedback and their rivals' observed actions. 
By contrast, models with myopic players rule out intertemporal strategic incentives, and thereby allow us to concentrate on the convergence analysis and limiting behavior of the equilibria among Bayesian learners. 
The assumption about myopic players is likewise widely adopted in the game theory literature, see, e.g., \cite{esponda2016berk,molavi2016learning,jiang2017distributed,harel2021rational,wu2025convergence}.
The main contributions of the paper can be summarized as follows:
\begin{itemize}

    \item[(a)] \textbf{Equilibrium models among Bayesian learners.} 
    We develop a new framework that integrates a Bayesian learning scheme into classical Bayesian game theory in order to address the players' lack of knowledge about the joint distribution of types. 
    We propose two equilibrium models: 
    the interim BNE-RABL model (Definition~\ref{def:BNE-HB}), which employs a composite risk measure that explicitly captures players' attitudes toward the two distinct forms of uncertainty, epistemic and aleatoric;
    and 
    the interim BNE-BPD model  (Definition~\ref{def:BNE-BPD}), which is based on the Bayesian predictive distribution (BPD), which consolidates aleatoric and epistemic uncertainty into a single distribution to which each player applies one risk measure. 
    We demonstrate the differences and connections among these equilibrium concepts within the Bayesian learning framework (Remark~\ref{remark:connection-difference}).
    Notably, calculating the objective function in the interim BNE-RABL model requires nested sampling of the unknown distributional parameter and then the rivals' types under it.
    This motivates us to use the interim BNE-BPD model as a tractable approximation with direct sampling from a conditional Bayesian predictive distribution.

    \item[(b)] \textbf{Existence and uniqueness of equilibrium.}
    We provide a rigorous theoretical analysis to establish the existence and uniqueness of equilibrium.
    Unlike the existing literature, e.g., \cite{meirowitz2003existence,guo2021existence,tao2025generalized,su2025existence}, where the assumptions are imposed directly on the objective function, 
    we identify the conditions on the model primitives, including the loss function, the risk measures, and the probability distributions, that guarantee existence and uniqueness (Theorems~\ref{thm:rabl-existence-schauder} and \ref{thm:rabl-contraction}).
    In doing so, we ensure that these assumptions are verifiable and applicable within the risk-averse Bayesian learning framework. 
    In particular, we establish the uniqueness of equilibrium in the case where the effect of a player's own action dominates the aggregate effect of the rivals' actions. 
    We impose refined assumptions on the gradients of the risk measures and 
    show how these assumptions are satisfied for several commonly used risk measures, including conditional value-at-risk (CVaR) and entropic risk measure in a price competition model.
    This analysis reveals how nonlinear risk measures may amplify cross-player marginal effects and thereby influence the equilibrium.

    \item[(c)]
    \textbf{Non-asymptotic convergence and the difference between BNE-RABL and BNE-BPD.}
    We prove that the equilibria among Bayesian learners converge to the BNE under the true joint type distribution as the number of rounds increases, 
    and derive a non-asymptotic convergence rate (Theorems~\ref{thm:convergence-rate-bne-rabl} and \ref{thm:convergence-rate-bne-bpd}). 
    This contrasts with the classical asymptotic convergence results in the Bayesian optimization literature, e.g., \cite{shapiro2026episodic,ma2024bayesian,milz2026stochastic}. 
    Building on these convergence results, 
    we quantify the discrepancy between the BNE-RABL and BNE-BPD strategies at any given round 
    and show that it vanishes as the number of rounds increases (Corollary~\ref{corollary:bne-rabl-and-bne-bpd}). 
    This result shows that the strategies obtained from the two models can be regarded as effective approximations to each other.
    We conduct numerical experiments in a price competition model to evaluate the performance of the Bayesian learning framework in the Bayesian game setting and to verify the convergence results (Section~\ref{sec:app_price_competition}).

\end{itemize}

The rest of the paper is organized as follows.
In Section~\ref{sec:preliminary}, we recall some notions of risk measures and Bayesian games.
In Section~\ref{sec:model}, we propose the interim BNE-RABL model and the interim BNE-BPD model and discuss the relationship between them.
Section~\ref{sec:exist-unique} investigates the existence and uniqueness of the proposed equilibrium models, and Section~\ref{sec:asymptotic-converge} analyzes the convergence behavior of both models.
In Section~\ref{sec:app_price_competition}, we apply the proposed framework to a price competition setting to verify the preceding assumptions and numerically illustrate the theoretical results.
Finally, we conclude the paper in Section~\ref{sec:concluding_remarks}.

\section{Preliminaries}

\label{sec:preliminary}

\subsection{Risk measure}
Consider a probability space $(\Omega, \mathcal{F}, \mathbb{P})$ and
real-valued random variables in $L^p:=L^p(\Omega,\mathcal{F},\mathbb{P})$, which
represent random losses. A functional $\rho:L^p\to\R$ is called a
\textit{monetary risk measure} if it satisfies the following two properties:
    (a) \textit{monotonicity}: for any $\xi,\zeta\in L^p$, $\xi\geqslant\zeta$
    almost surely implies $\rho(\xi)\geqslant\rho(\zeta)$;
    (b) \textit{cash invariance}: $\rho(\xi+c)=\rho(\xi)+c$ for any $\xi\in L^p$
    and $c\in\R$.
A monetary risk measure $\rho$ is said to be \textit{convex} if it additionally
satisfies
(c) \textit{convexity}: for any $\xi,\zeta\in L^p$ and $\lambda\in[0,1]$,
    $\rho(\lambda\xi+(1-\lambda)\zeta)\leqslant\lambda\rho(\xi)+(1-\lambda)\rho(\zeta)$,
and a convex risk measure $\rho$ is said to be \textit{coherent} if it further
satisfies
(d) \textit{positive homogeneity}: $\rho(\lambda\xi)=\lambda\rho(\xi)$ for
    any $\xi\in L^p$ and $\lambda\geqslant 0$.
We shall also invoke two additional properties. A risk measure $\rho$ is
(e) \textit{law invariant}: if $\rho(\xi)=\rho(\zeta)$ for any $\xi,\zeta\in L^p$ share
the same probability distribution;
(a') \textit{strictly monotonic}: if $\rho(\xi)>\rho(\zeta)$
for every $\xi,\zeta\in L^p$ with $\xi\geqslant\zeta$ almost surely and
$\mathbb{P}\{\omega\in\Omega:\xi(\omega)>\zeta(\omega)\}>0$.

When $\rho$ is a law invariant risk measure defined over a non-atomic space $L^p$, it admits an equivalent representation
as a \textit{risk functional} over the space of probability distributions induced by random variables. 
Specifically, there exists
a functional $\varrho:\mathcal{M}_1^p\to\R$ such that
$
    \rho(\xi) = \varrho(P_\xi) := \rho\!\left(F_{P_\xi}^{-1}(U)\right),
$
where $P_\xi$ denotes the distribution induced by $\xi$, $U$ is a random variable uniformly
distributed on $[0,1]$, $F_{P_\xi}$ is the cumulative distribution function
associated with $P_\xi$, and
$F_{P_\xi}^{-1}(t):=\inf\{s\in\R:F_{P_\xi}(s)\geqslant t\}$ is the corresponding quantile function with $t\in[0,1]$.

In this paper, we will primarily use 
a specific class of convex risk measures which can be represented in the following parametric form:
\begin{eqnarray} \label{eqn:parametric-risk-measure}
   \rho(\xi):=\inf_{s \in \mathcal{S}}\mathbb{E}[\Psi(\xi,s)],
\end{eqnarray}
where $\mathcal{S}$ is a subset of a finite-dimensional vector space and $\Psi:\R\times \mathcal{S}$ is a real-valued function.
This class of risk measures is proposed in  \cite{guigues2023risk}, 
and covers 
a number of important risk measures.
For instance, if 
$\Psi(\xi,s):=\xi$, 
then %
\eqref{eqn:parametric-risk-measure} %
recovers the expectation.
If we set
$\mathcal{S}:=\R$ and $\Psi(\xi,s) := s - u(s-\xi)$ with $u:\R \rightarrow \R$ being a proper closed concave and non-decreasing utility function, 
then 
\eqref{eqn:parametric-risk-measure} recovers the negated optimized certainty equivalent (OCE) in \cite{ben1986expected,ben2007old},
\begin{eqnarray} \label{eqn:oce}
    \rho(\xi) = \inf_{s\in \R} s - \mathbb{E}\left[ u(s-\xi) \right]. 
\end{eqnarray}
In the latter case, 
if we set $u(x)=\frac{1}{1-\beta}x$ for $x\leq 0$ and $u(x)=0$ for $x>0$, then we obtain the 
CVaR with confidence level $\beta$
\begin{eqnarray*}
    \rm{CVaR}_{\beta_i}(\xi) = \inf_{s\in\R} s + \frac{1}{1-\beta} \mathbb{E}\left[ (\xi-s)_+ \right],
\end{eqnarray*}
where $(x)_+:= \max\{0,x\}$;
in the case that
$u(x) = \frac{1}{\gamma}(1-e^{-\gamma x})$ with $\gamma>0$, we obtain the entropic risk measure 
\begin{eqnarray*}
    \rho^{\rm ent}(\xi) = \frac{1}{\gamma}\ln \mathbb{E}[e^{\gamma\xi}].
\end{eqnarray*}
Moreover, by setting $\mathcal{S}=\R$ and $\Psi(\xi,s) = u(s) - u(s - \xi)$, 
the parametric form \eqref{eqn:parametric-risk-measure} recovers the negated modified version of optimized certainty equivalent (MOCE) in \cite{wu2022preference},
\begin{eqnarray*}
    \rho(\xi) = \inf_{s\in \R} u(s) - \mathbb{E}\left[ u(s - \xi) \right]. 
\end{eqnarray*}

\subsection{Classical Bayesian game and equilibrium}
Consider a non-cooperative game with $n$ players. 
Let $N:=\{1,\ldots,n\}$. 
Each player $i\in N$ has a loss function denoted by $c_i(a_i,a_{-i},\theta_i,\theta_{-i})$, which depends on the player's action $a_i$, the rivals' actions $a_{-i}:=(a_1,\dots,a_{i-1},a_{i+1},\dots,a_n)$,
the player $i$'s type $\theta_i$,
and the rivals' types $\theta_{-i}:= (\theta_1,\dots,\theta_{i-1},\theta_{i+1},\dots,\theta_n)$.
For $i\in N$,
we assume that $\theta_i\in
\Theta_i\subset \R^{d_i}$ and $a_i\in\mathcal{A}_i\subset \R^{n_i}$.
Following the terminology of \cite{meirowitz2003existence}, a profile of types is a vector $\theta:= (\theta_1,\dots,\theta_n)\in \Theta:= \Theta_1\times \cdots \times \Theta_n$ and a profile of actions is a vector $a:= (a_1,\dots,a_n)\in \mathcal{A}:= \mathcal{A}_1\times \cdots \times \mathcal{A}_n$.
Throughout the paper, we make the following basic assumptions.

\begin{assumption}
\label{basic-assumption}
For $i\in N$,
the following are satisfied.
(a) $\Theta_i$ and $\mathcal A_i$ are nonempty, compact, and convex.
(b) %
   The loss function $c_i:\mathcal{A}_i\times \mathcal{A}_{-i} \times \Theta_i \times \Theta_{-i} \rightarrow \R$ is continuous.
   Let $\bar c_i$ be a positive constant such that 
\begin{eqnarray} \label{eqn:c-bound}
    |c_i(a_i,a_{-i},\theta_i,\theta_{-i})|
    \leq \bar c_i, \quad \forall (a_i,a_{-i},\theta_i,\theta_{-i})
\in\mathcal A_i\times\mathcal A_{-i}\times\Theta_i\times\Theta_{-i}.
\end{eqnarray}
\end{assumption}
These conditions are standard in  %
Bayesian game literature, see e.g., \cite{meirowitz2003existence} and
\cite{guo2021existence}. 

In the standard setup in Bayesian games, 
$\theta$ is a random vector endowed with a distribution $\eta$ with full support (i.e., $\rm{supp}(\eta) = \Theta$).
$\eta$ is common knowledge among all players.
This information describes the probability that a particular type profile $\theta$ is realized, which may be retrieved from empirical data in practice.
After the realization of type $\theta$, each player $i$ only knows its own type but not others, and thereby constructs a belief about $\theta_{-i}$ using a conditional probability distribution $\eta(\cdot|\theta_i)$ with the density function 
$
    p(\theta_{-i}\mid \theta_i)
    =
    \frac{p(\theta_i,\theta_{-i})}
    {\int_{\Theta_{-i}} p(\theta_i,\theta_{-i})\,d\theta_{-i}},
$
which describes the probability that player $i$'s rivals have a particular type profile $\theta_{-i}$.
Throughout the paper, we will use $\theta$ to denote both a deterministic element of $\Theta$ and a random vector $\theta(\omega)$ depending on the context.
According to the time at which the information of players' types is observed, a Bayesian game
can be divided into three stages. In the \emph{ex-ante stage}, players are uncertain
about the potential realizations of both their own type parameters $\theta_i$ and
the other players' types $\theta_{-i}$.
In the \emph{interim stage}, each player has observed its own type $\theta_i$ but
still does not know its rivals' types $\theta_{-i}$.
In the \emph{ex-post stage}, players have complete information about all
type parameters, including those of their rivals; see, e.g.,
\cite{koniorczyk2020ex,saglam2025bayesian} for a detailed discussion on the three stages.

In Bayesian games with pure strategies, the response (strategy) function of player $i$ is represented by $f_i$ mapping from the type space $\Theta_i$ to the action space $\mathcal{A}_i$.
For $i\in N$, we denote by $\mathcal{F}_i$ the set of measurable functions $f_i:\Theta_i \rightarrow \mathcal{A}_i$ 
endowed with the supremum norm
$\|f_i\|_\infty = \sup_{\theta_i\in \Theta_i}\|f_i(\theta_i)\|$, where $\|\cdot\|$
denotes the Euclidean norm in finite-dimensional spaces,
and by $\mathcal{C}_i \subset \mathcal{F}_i$ the set of continuous functions.
For simplicity of notation, we write
$
    \mathcal{F}:=\prod_{j\in N}\mathcal{F}_j,
    \mathcal{F}_{-i}:=\prod_{j\in N\setminus\{i\}}\mathcal{F}_j ,
    \mathcal{C}:=\prod_{j\in N}\mathcal{C}_j,
    \mathcal{C}_{-i}:=\prod_{j\in N\setminus\{i\}}\mathcal{C}_j .
$
With a slight abuse of notation,
we use the same symbol for the supremum norm on $\mathcal{F}$ and on $\mathcal{F}_{-i}$ when the relevant space is clear from context, i.e., for $f\in\mathcal{F}$,
$ \|f\|_\infty := \max_{j\in N}\|f_j\|_\infty,$
and    for $f_{-i}\in \mathcal{F}_{-i}$,
$\|f_{-i}\|_\infty:=\max_{j\neq i}\|f_j\|_\infty.$

We define each player's expected loss given the rivals' strategy function $f_{-i}$ by 
\begin{equation}
    \min_{a_i\in \mathcal{A}_i}
    \mathbb{E}_{\eta(\cdot\mid \theta_i)}
    \left[
        c_i\bigl(a_i,f_{-i}(\theta_{-i}),\theta_i,\theta_{-i}\bigr)
    \right], \quad \forall \theta_i \in \Theta_{i},
    \label{eq:classical_BNE_problem}
\end{equation}
where the expectation is taken with respect to the conditional probability distribution $\eta(\cdot|\theta_i)$, and $f_{-i}(\theta_{-i})$ represents the rivals' action in scenario $\theta_{-i}\in \Theta_{-i}$. 
Assuming that each player chooses its optimal strategy by minimizing expected loss under the Nash conjecture (taking rivals' optimal response functions as given), we consider a situation in which no player can benefit by unilaterally deviating from its own strategy. 
This leads to the formal definition of a Bayesian Nash equilibrium.

\begin{definition}[Interim BNE]
\label{def:bne}
A pure-strategy interim Bayesian Nash equilibrium is an $n$-tuple
$f^*:=(f_1^*,\ldots,f_n^*)\in \mathcal{F}$, mapping from
$\Theta_1\times\cdots\times\Theta_n$ to $\mathcal{A}_1\times\cdots\times\mathcal{A}_n$,
such that for each $i\in N$ and each $\theta_i\in\Theta_i$,
\begin{equation}
    f_i^*(\theta_i)
    \in
    \arg\min_{a_i\in \mathcal{A}_i}
    \mathbb{E}_{\eta(\cdot\mid \theta_i)}
    \left[
        c_i\bigl(a_i,f_{-i}^*(\theta_{-i}),\theta_i,\theta_{-i}\bigr)
    \right].
    \label{eqn:BNE}
\end{equation}
\end{definition}

\subsection{Notation}

To facilitate reading, we list some  specific 
notation to be used throughout this paper 
in Table~\ref{tab:notation}.

\begin{table}[htbp]
\centering
\caption{Notation.}
\label{tab:notation}
\begin{tabular}{ll}
\hline
\textbf{Notation} & \textbf{Meaning} \\
\hline
$a_i\in\mathcal{A}_i$ 
& Action of player $i$ and its space, $a:=(a_1,\dots,a_n) \in \mathcal{A}:= \mathcal{A}_1\times \cdots \times \mathcal{A}_n$ \\
$\theta_i\in \Theta_i$ 
& Type of player $i$ and its space, $\theta :=(\theta_1,\dots,\theta_n)\in\Theta:= \Theta_1\times \cdots \times \Theta_n$ \\
$c_i(a_i,a_{-i},\theta_i,\theta_{-i})$ 
&Loss function of player $i$\\
$\zeta, \zeta^*\in \mathcal{Z}$ 
& Parameter of the joint distribution of $\theta$ and true parameter \\
$\eta(\cdot\,;\zeta)$, $p(\theta;\zeta)$ 
& Joint distribution of $\theta$ 
under parameter $\zeta$ and its density\\
$\eta_{i}(\cdot;\zeta)$, $p_i(\theta_i;\zeta)$ 
& Marginal distribution of $\theta_i$
and its density\\
$\eta(\cdot|\theta_i;\zeta)$, $p(\theta_{-i}\mid\theta_i;\zeta)$
& Conditional distribution of $\theta_{-i}$ given $\theta_i$
and its density\\
$\mu_i^t$, $m_i^t(\zeta)$
& Player $i$'s prior and posterior beliefs 
of $\zeta$
at round $t$ and its density\\
$\nu_i^t$, $q_i^t(\theta)$&
Bayesian predictive distribution of $\theta$ by player $i$ at round $t$ and its density\\
$\nu_{i,\theta_i}^t$, $q_{i,\theta_i}^t(\theta_i)$&
Marginal distribution 
of player $i$ under $\nu_i^t$  and its density\\
$\nu_{i}^t(\cdot|\theta_i)$, $q_i^t(\theta_{-i}|\theta_i)$&
Conditional distribution of $\theta_{-i}$ 
of player $i$ under $\nu_i^t$ and its density\\
$\rho_{\eta(\cdot|\theta_i;\zeta)}^{\rm al}$ & Risk measure for aleatoric uncertainty in $\theta_{-i}$ w.r.t. $\eta(\cdot|\theta_i;\zeta)$\\
$\rho_{\mu_i^t}^{\rm ep}$ & Risk measure for epistemic uncertainty in $\zeta$ w.r.t. $\mu_i^t$ \\
$\rho_{\nu_i^t(\cdot|\theta_i)}^{\rm bp}$ & Risk measure for uncertainties in both $\theta_{-i}$ and $\zeta$ w.r.t. $\nu_i^t(\cdot|\theta_i)$ \\
$\rho_{\nu_{i,\theta_i}^t}^{\rm ex}$ & Risk measure for ex-ante uncertainty in $\theta_i$ w.r.t. $\nu_{i,\theta_i}^t$ \\
$v_{i,\mu_i^t}^t(a_i,f_{-i},\theta_i)$ & 
$\rho_{\mu_i^t}^{\rm ep}
    \left(
        \rho_{\eta(\cdot\mid\theta_i;\zeta)}^{\rm al}
        \left( c_i\left(  a_i,  f_{-i}(\theta_{-i}),  \theta_i, \theta_{-i} \right) \right)
    \right)$\\
$\vartheta_{i,\mu_i^t}^t(f_{-i},\theta_i)$, $\mathcal A_{i,\mu_i^t}^{t*}(f_{-i},\theta_i)$
& Optimal value and the set of optimal solutions for $\min\limits_{a_i\in \mathcal{A}_i}v_{i,\mu_i^t}^t(a_i,f_{-i},\theta_i)$\\
$v_{i,\nu_i^t(\cdot|\theta_i)}^{\text{bp},t}(a_i,f_{-i},\theta_i)$ & 
$\rho_{i,\nu_i^t(\cdot|\theta_i)}^{\rm bp}
        \left( c_i\left(a_i, f_{-i}(\theta_{-i}), \theta_i, \theta_{-i} \right)
    \right)$\\
$\vartheta_{i,\nu_i^t(\cdot|\theta_i)}^{\text{bp},t}(f_{-i},\theta_i)$, $\mathcal A_{i,\nu_i^t(\cdot|\theta_i)}^{\text{bp},t*}$
& Optimal value and the set of optimal solutions for $\min\limits_{a_i\in \mathcal{A}_i}v_{i,\nu_i^t(\cdot|\theta_i)}^{\text{bp},t}(a_i,f_{-i},\theta_i)$\\
\hline
\end{tabular}
\end{table}

\section{Repeated Bayesian games and learning mechanism}
\label{sec:model}

In classical BNE models \eqref{eqn:BNE}, the joint probability distribution $\eta$ is assumed to be publicly known. 
Here, we consider the case that
the true distribution 
$\eta$ 
is unknown but can be learned from empirical data.

\subsection{BNE among risk-averse Bayesian learners}
We begin by specifying some important features of the game.

\begin{specification} 
\label{model:repeated-game}
The Bayesian game is repeatedly played under the following rules.
    
\begin{itemize}

\item[(a)] \textbf{Players.}
The games are played repeatedly among the same set of players. The cost
functions, action spaces, and type spaces remain unchanged across rounds; that
is,\linebreak
$
    c_i^t(a_i,a_{-i},\theta_i,\theta_{-i})
    =
    c_i(a_i,a_{-i},\theta_i,\theta_{-i}),\
    \mathcal{A}_i^t=\mathcal{A}_i,\ 
    \Theta_i^t=\Theta_i,
$
for all $i\in N$ and $t=1,2,\ldots$. 
The type profiles $\{\theta^t\}_{t\ge 1}$ are i.i.d. samples generated by a fixed but unknown joint distribution $\eta$.

\item[(b)] \textbf{Repetition of the game and observation.}
At the beginning of round $t$, each player makes a decision according to its present belief on the joint distribution of $\theta$.
A Bayesian Nash equilibrium 
of the one-shot game 
is reached. 
At the end of the round, 
the realized type profile $\theta^t$ is publicly observed by
all players.

\item[(c)] \textbf{Information about $\eta$.}
The true distribution of $\theta$ belongs to a known parametric family  $\{\eta(\cdot;\zeta):\zeta\in\mathcal{Z}\}$ with density functions $\{p(\theta;\zeta):\zeta\in\mathcal{Z}\}$. 
There exists a single true parameter $\zeta^*\in\mathcal{Z}$ such that
the true distribution is $\eta(\cdot;\zeta^*)$.
The parametric family is common knowledge, whereas the true value $\zeta^*$ is
unknown. 
For each $i\in N$ and $\zeta\in\mathcal{Z}$, whenever the marginal density of $\theta_i$,
$
    p_i(\theta_i;\zeta):=\int_{\Theta_{-i}} p(\theta_i,\theta_{-i};\zeta)\,d\theta_{-i}
    >0,
$
we write $\eta(\cdot\mid \theta_i;\zeta)$ for the conditional distribution of
$\theta_{-i}$ given $\theta_i$ under $\eta(\cdot;\zeta)$ with density function $p(\theta_{-i}|\theta_i;\zeta):= \frac{p(\theta_{i},\theta_{-i};\zeta)}{p_i(\theta_i;\zeta)}$.

\item[(d)] \textbf{Information learning and updating mechanism.}
Each player treats $\zeta$ as a random variable and assigns a prior distribution $\mu_i^1$ with density $m_i^1$ to it, based on its own available information or subjective judgment. 
After observing the sample history $\boldsymbol{\theta}^{t-1}:=(\theta^1,\ldots,\theta^{t-1}),$
player $i$ updates its posterior distribution to $\mu_i^t$, with density
$m_i^t$, by Bayes' formula:
\begin{equation}
\label{eqn:bayesian-update1}
    m_i^t(\zeta)
    =
    \frac{p(\theta^{t-1};\zeta)m_i^{t-1}(\zeta)}
    {\int_{\mathcal{Z}}p(\theta^{t-1};\xi)m_i^{t-1}(\xi)\,d\xi}
    =
    \frac{\prod_{\tau=1}^{t-1}p(\theta^\tau;\zeta)m_i^1(\zeta)}
    {\int_{\mathcal{Z}}\prod_{\tau=1}^{t-1}p(\theta^\tau;\xi)m_i^1(\xi)\,d\xi},
    \qquad t=2,3,\ldots,
\end{equation}
where
$L_{t-1}(\zeta):=\prod_{\tau=1}^{t-1}p(\theta^\tau;\zeta)$
is the likelihood of the observed sample history under parameter value
$\zeta$. The posterior beliefs may be heterogeneous across players because
their prior distributions may differ.

\item[(e)] \textbf{Objective of each player.} At each round, player $i$ faces two distinct sources of uncertainty:
aleatoric uncertainty in the rivals' current types $\theta_{-i}^t$ and epistemic uncertainty about the true parameter $\zeta^*$.
Players may take different risk attitudes toward the two sources of uncertainty.
To formalize this distinction, we assign player $i$ two risk measures:
$\rho^{\rm al}$ for aleatoric uncertainty and $\rho^{\rm ep}$ for epistemic uncertainty. 
Given the rivals' strategy profile $f_{-i}^t\in\mathcal{F}_{-i}$ and player $i$'s realized type
$\theta_i^t$, player $i$ solves the nested risk-averse optimization problem
\begin{equation}
\label{eqn:model-bne-he-ra}
    \min_{a_i\in\mathcal{A}_i}
    \rho_{\mu_i^t}^{\rm ep}
    \left(
        \rho_{\eta(\cdot\mid \theta_i^t;\zeta)}^{\rm al}
        \left(
            c_i\left(
                a_i,
                f_{-i}^t(\theta_{-i}),
                \theta_i^t,
                \theta_{-i}
            \right)
        \right)
    \right),
\end{equation}
where the inner risk measure is used to evaluate the random loss 
associated with the aleatoric uncertainty in $\theta_{-i}$,
whereas the outer risk measure is used to evaluate the epistemic uncertainty in $\zeta$.
To avoid distracting from the main modeling ideas, we assume throughout that 
the problem is well defined. 
\hfill $\Box$

\end{itemize}   
\end{specification}

The nested formulation in \eqref{eqn:model-bne-he-ra} can be read from inside to outside. 
If the true parameter $\zeta^*$ were known, then the only remaining uncertainty for player $i$, after observing $\theta_i^t$, would be the aleatoric uncertainty in $\theta_{-i}^t$. 
The corresponding aleatoric risk would be
\[
    \rho_{\eta(\cdot\mid \theta_i^t;\zeta^*)}^{\rm al}
    \left[
        c_i\left(
            a_i,
            f_{-i}^t(\theta_{-i}^t),
            \theta_i^t,
            \theta_{-i}^t
        \right)
    \right].
\]
When $\zeta^*$ is unknown, the aleatoric risk value depends on the candidate parameter $\zeta$, and hence becomes a random quantity under player $i$'s posterior belief $\mu_i^t$. 
The outer risk measure $\rho_{\mu_i^t}^{\rm ep}$ 
aggregates this parameter-dependent
aleatoric risk across candidate values of $\zeta$, and thereby represents player $i$'s attitude toward epistemic uncertainty.
By composing these two risk measures, we arrive at the full objective function in \eqref{eqn:model-bne-he-ra}, which provides a comprehensive measure of the total risk faced by the player.
This formulation is consistent with the Bayesian composite-risk perspective \citep{wu2018bayesian,lin2022bayesian,ma2024bayesian}.

For the Bayesian game among risk-averse Bayesian learners under
Model Specification~\ref{model:repeated-game}, we define the equilibrium concept as
follows.

\begin{definition} [Interim BNE-RABL]
\label{def:BNE-HB}
    A pure-strategy interim Bayesian Nash equilibrium among risk-averse Bayesian learners (interim BNE-RABL) at round $t$ 
    under the 
   profile of posterior distributions $\mu^t:=(\mu_1^t,\cdots,\mu_n^t)$ 
    is an $n$-tuple $f^{t*}:=(f_1^{t*},\dots,f_n^{t*})\in\mathcal{F}$ mapping from $\Theta_1\times\cdots\times\Theta_n$ to $\mathcal{A}_1\times\cdots\times\mathcal{A}_n$ such that for each $i\in N$ and fixed $\theta_i\in \Theta_i$,
    \begin{equation}
\label{eqn:BNE-heter-belief}
    f_i^{t*}(\theta_i)
    \in
    \arg\min_{a_i\in\mathcal{A}_i}
    \rho_{\mu_i^t}^{\rm ep}
    \left(
        \rho_{\eta(\cdot\mid \theta_i;\zeta)}^{\rm al}
        \left(
            c_i\left(
                a_i,
                f_{-i}^{t*}(\theta_{-i}),
                \theta_i,
                \theta_{-i}
            \right)
        \right)
    \right).
\end{equation}
\end{definition}
The BNE-RABL model 
differs from the classical interim BNE model by (a) incorporating a Bayesian 
learning structure for updating 
each player's belief about $\zeta^*$
and (b) considering players' heterogeneous risk attitudes against
two sources of uncertainty. 
If the true parameter $\zeta^*$ is known, 
and each player is risk-neutral with respect to the aleatoric uncertainty of its rivals' 
type parameters, then
 \eqref{eqn:BNE-heter-belief} reduces to the classical interim BNE in Definition~\ref{def:bne} under the true distribution $\eta(\cdot;\zeta^*)$.
The new model effectively 
extends the classical  BNE models, which are primarily based on von Neumann--Morgenstern's expected utility theory, to a broader class of risk-sensitive strategic interactions.

\subsection{BNE based on Bayesian Predictive Distributions}
\label{sec:model-bpd}

The BNE-RABL model in Definition~\ref{def:BNE-HB} describes a player who explicitly distinguishes between aleatoric and epistemic uncertainty. 
The nested structure, however, is computationally demanding. 
Direct solution methods based on discretizing type spaces and using step-like strategies, such as those in \cite{athey2001single,liu2025bayesian}, suffer from the curse of dimensionality: the number of decision variables grows rapidly with the number of players, the dimension of the type vector, and the number of grid points. 
To simplify the structure of uncertainty and reduce the associated simulation cost, we introduce a new equilibrium concept based on Bayesian predictive distributions (BPD).

At the beginning of round $t$, player $i$ has a belief $\mu_i^t$ about $\zeta^*$ with density $m_i^t$
as shown in Model~Specification~\ref{model:repeated-game}~(c)-(d).
Based on $\mu_i^t$, player $i$ forms the BPD of the current type profile $\theta$ by averaging the parametric joint density over
$\zeta$:
\begin{equation}
\label{eqn:bpd-def}
    q_i^t(\theta) := \int_{\mathcal{Z}} p(\theta;\zeta)m_i^t(\zeta)\,d\zeta.
\end{equation}
Let $\nu_i^t$ denote the corresponding 
probability distribution
which synthesizes 
two 
sources of uncertainty into a single predictive distribution.

Under this setup, 
since player $i$ observes its own type before choosing an action, the relevant interim belief is the conditional distribution of $\theta_{-i}$ given $\theta_i$ under the BPD $\nu_i^t$ with density
\begin{equation}
\label{eqn:conditional-bpd}
    q_i^t(\theta_{-i}\mid \theta_i)
    :=
    \frac{q_i^t(\theta_i,\theta_{-i})}
    {\int_{\Theta_{-i}}q_i^t(\theta_i,\theta_{-i})d\theta_{-i}}.
\end{equation}
We denote 
the corresponding
conditional distribution 
by $\nu_i^t(\cdot|\theta_i)$.
Based on this interim belief, player $i$ evaluates the total predictive uncertainty about $\theta_{-i}$ through a single risk measure $\rho^{\rm bp}$ with respect to $\nu_i^t(\cdot\mid\theta_i)$. 
Therefore, given the rivals' strategy profile
$f_{-i}^t\in\mathcal{F}_{-i}$ and player $i$'s realized type $\theta_i$,
the interim decision problem is
\begin{equation}
\label{eqn:model-bpd-ra}
    \min_{a_i\in\mathcal{A}_i}
    \rho_{\nu_i^t(\cdot\mid\theta_i)}^{\rm bp}
    \left(
        c_i\left(
            a_i,
            f_{-i}^t(\theta_{-i}),
            \theta_i,
            \theta_{-i}
        \right)
    \right).
\end{equation}
Instead of computing a nested risk as $\rho_{\mu_i^t}^{\text{ep}} \circ \rho_{\eta(\cdot|\theta_i;\zeta)}^{\text{al}}$ in \eqref{eqn:model-bne-he-ra}, we integrate out the parametric uncertainty over $\zeta$ 
and consider a single risk measure in problem \eqref{eqn:model-bpd-ra} based on the Bayesian predictive distribution \eqref{eqn:bpd-def}. %
This prompts us to study the Bayesian Nash equilibrium with Bayesian predictive distribution (BNE-BPD) 
defined as follows.

\begin{definition} [Interim BNE-BPD]
\label{def:BNE-BPD}
    A pure-strategy interim BNE-BPD at round $t$ under a profile of posterior distributions $\mu^t:=(\mu_1^t,\cdots,\mu_n^t)$ 
    is an $n$-tuple $f^{t*}:=(f_1^{t*},\dots,f_n^{t*})\in\mathcal{F}$ mapping from $\Theta_1\times\cdots\times\Theta_n$ to $\mathcal{A}_1\times\cdots\times\mathcal{A}_n$ such that for each $i\in N$ 
    \begin{equation}
\label{eqn:BNE-BPD}
    f_i^{t*}(\theta_i)
    \in \arg\min_{a_i\in\mathcal{A}_i}
    \rho_{\nu_i^t(\cdot\mid\theta_i)}^{\rm bp}
    \left( c_i \left( a_i, f_{-i}^{t*}(\theta_{-i}), \theta_i, \theta_{-i} \right) \right),
\end{equation}
    where $\nu_i^t, i=1,\dots,n$ are defined by \eqref{eqn:bpd-def}.
\end{definition}

In the BNE-RABL model \eqref{eqn:BNE-heter-belief}, 
player $i$ 
tackles  
the risk of the epistemic uncertainty $\zeta$ and the risk of aleatoric uncertainty $\theta_{-i}$ separately.
By contrast, player $i$ uses
the posterior distribution 
of $\zeta$ to construct 
a single predictive joint distribution of $\theta$ and then applies a single risk measure to the combined predictive uncertainty in $\theta_{-i}$.
We elaborate on the connections and differences between the two equilibrium models in the following remark.

\begin{remark} \label{remark:connection-difference}
\begin{itemize}
\item[(i)]
Intuitively, 
the BNE-BPD model provides a surrogate for the BNE-RABL model when epistemic uncertainty is small.
Indeed, if the posterior distribution $\mu_i^t$ concentrates around the true parameter
$\zeta^*$, 
that is, the posterior variance of $\zeta$ is small,
then the predictive joint distribution $\nu_i^t$ is close to
the true joint distribution $\eta(\cdot;\zeta^*)$, and hence the induced conditional
distribution $\nu_i^t(\cdot \mid \theta_i)$ is close to the true conditional
distribution $\eta(\cdot \mid \theta_i;\zeta^*)$.
If, in addition, the risk measure $\rho^{\rm bp}$ in BNE-BPD is chosen to coincide with
the aleatoric risk measure $\rho^{\rm al}$ in BNE-RABL, then both BNE-BPD and BNE-RABL
converge to the true BNE with risk-averse objectives as $t\to\infty$.
Conversely, when the posterior variance of $\zeta$ remains large, $\nu_i^t$
may deviate substantially from $\eta(\cdot;\zeta^*)$, 
in which case
both BNE-BPD and BNE-RABL may
deviate significantly from the true BNE.
We return to this issue in Section~\ref{sec:asymptotic-converge}.

    \item[(ii)]
    Nevertheless, the BNE-BPD and the BNE-RABL generally differ.
To see the fundamental 
difference between the two models, 
we consider the risk-neutral case.
The objective function in \eqref{eqn:BNE-heter-belief} can be written as
\begin{eqnarray}
\label{eqn:rabl-risk-neutral-objective}
&&\mathbb{E}_{\mu_i^t}
    \left[
        \mathbb{E}_{\eta(\cdot\mid \theta_i;\zeta)}
        \left[
            c_i\left(
                a_i,
                f_{-i}^{t}(\theta_{-i}),
                \theta_i,
                \theta_{-i}
            \right)
        \right]
    \right]\nonumber\\
    &&\quad =\int_{\mathcal Z}
    \left(
        \int_{\Theta_{-i}}
            c_i\bigl(a_i,f_{-i}^t(\theta_{-i}),\theta_i,\theta_{-i}\bigr)
            \frac{p(\theta_i,\theta_{-i};\zeta)}
            {\int_{\Theta_{-i}} p(\theta_i,\theta_{-i};\zeta)\,d\theta_{-i}}
        \,d\theta_{-i}
    \right)
    m_i^t(\zeta)\,d\zeta.
\end{eqnarray}
Under some moderate conditions,
\eqref{eqn:rabl-risk-neutral-objective} can be reformulated by Fubini's theorem as
\bgeqn 
\label{eqn:bpd-conditional}
    \int_{\Theta_{-i}}
        c_i\bigl(a_i,f_{-i}^t(\theta_{-i}),\theta_i,\theta_{-i}\bigr)
        \widetilde q_i^t(\theta_{-i}\mid\theta_i)
    \,d\theta_{-i},
\edeqn 
where
\begin{eqnarray}
\label{eqn:rabl-conditional}
   \widetilde q_i^t(\theta_{-i}\mid\theta_i):= \int_{\mathcal Z}
        \frac{p(\theta_i,\theta_{-i};\zeta)}
        {\int_{\Theta_{-i}} p(\theta_i,\theta_{-i};\zeta)\,d\theta_{-i}}
        m_i^t(\zeta)\,d\zeta = \int_{\mathcal Z}
        p(\theta_{-i}|\theta_i; \zeta)
        m_i^t(\zeta)\,d\zeta.
\end{eqnarray}
By contrast, 
the objective function 
in the BNE-BPD model \eqref{eqn:BNE-BPD}
can be written as 
\begin{equation}
\label{eqn:bpd-risk-neutral-objective}
    \mathbb{E}_{\nu_i^t(\cdot\mid\theta_i)}
    \left[ c_i \left( a_i, f_{-i}^{t}(\theta_{-i}), \theta_i, \theta_{-i} \right) \right] =\int_{\Theta_{-i}}
        c_i\bigl(a_i,f_{-i}^t(\theta_{-i}),\theta_i,\theta_{-i}\bigr)
        q_i^t(\theta_{-i}\mid\theta_i)
    \,d\theta_{-i}.
\end{equation}
By \eqref{eqn:bpd-def}-\eqref{eqn:conditional-bpd},
\begin{eqnarray}
\label{eqn:bpd-condition}
q_i^t(\theta_{-i}\mid\theta_i)
& = & 
    \frac{\int_{\mathcal Z}p(\theta_i,\theta_{-i};\zeta)m_i^t(\zeta)\,d\zeta} 
    {\int_{\Theta_{-i}} \int_{\mathcal Z} p(\theta_i,\theta_{-i};\zeta) m_i^t(\zeta) d\zeta d\theta_{-i}} \nonumber\\
&=& \int_\mathcal{Z} p(\theta_{-i}\mid\theta_i;\zeta)
\frac{p_i(\theta_i;\zeta)}
{\int_\mathcal{Z}\left[\int_{\Theta_{-i}}
p(\theta_i,\theta_{-i};\zeta)d\theta_{-i}\right]m_i^t(\zeta)d\zeta}
m_i^t(\zeta)d\zeta \nonumber\\
&=& \int_\mathcal{Z} p(\theta_{-i}\mid\theta_i;\zeta)
\frac{p_i(\theta_i;\zeta)}
{\int_\mathcal{Z} p_i(\theta_i;\zeta)m_i^t(\zeta)d\zeta}
m_i^t(\zeta)d\zeta.
\end{eqnarray}
Comparing the right-hand side (rhs) 
of \eqref{eqn:rabl-conditional} and the rhs
of \eqref{eqn:bpd-condition},
we see that 
both expressions calculate 
an average of $p(\theta_{-i}\mid\theta_i;\zeta)$
with respect to the posterior 
density $m_i^t$. However, the latter is reweighted by 
$\frac{p_i(\theta_i;\zeta) }{\int_\mathcal{Z}p_i(\theta_i;\zeta) m_i^t(\zeta)d\zeta}$.
Intuitively, this likelihood reweighting uses the player's own type $\theta_i$ as an additional signal about $\zeta$, and assigns greater weight to parameter values under which the observed $\theta_i$ is more likely and thereby alters the inferred distribution of the rivals' types.

(iii)
Evaluating the objective function of the interim BNE-RABL in Definition~\ref{def:BNE-HB} requires a two-stage simulation: 
for each player $i$ and each type $\theta_i$, one must first sample the parameter $\zeta$ of the underlying joint distribution from the posterior $\mu_i^t$, 
and then, 
for each pair $(\theta_i,\zeta)$, sample the rivals' types $\theta_{-i}$ from $\eta(\cdot\mid\theta_i;\zeta)$.
The interim BNE-BPD in Definition~\ref{def:BNE-BPD} avoids this nested procedure:
for each player $i$ and each type $\theta_i$, it suffices to sample the rivals' types $\theta_{-i}$ directly from the Bayesian predictive distribution $\nu_i^t(\cdot\mid\theta_i)$. 
This formulation greatly simplifies the simulation process, 
especially when $\nu_i^t(\cdot\mid\theta_i)$ admits a closed form.
In Section~\ref{sec:ex-ante}, we incorporate the ex-ante uncertainty in $\theta_i$ and derive a reformulation of the interim BNE-BPD that can further simplify the simulation process.

\end{itemize}
\end{remark}

\section{Ex-ante equilibrium in the BNE-BPD model}

\label{sec:ex-ante}

In Section \ref{sec:model}, we focus on the interim stage, where each player observes their own type parameter and exhibits risk aversion with respect to the uncertainty surrounding the rival players' type parameters. 
It is possible to develop an ex-ante model before players' observation of their own type parameters. Here, we 
provide a sketch of the ex-ante BNE model.
We concentrate on BNE-BPD, as a similar 
approach can be 
developed for BNE-RABL.

\subsection{Ex-ante BNE-BPD and its relationship with interim BNE-BPD}

Recall that at the interim stage, the problem faced by player $i$ in BNE-BPD~\eqref{eqn:BNE-BPD} is formulated in terms of the conditional distribution given the observed own-type parameter $\theta_i\in \Theta_i$, namely,
\begin{equation*}
    \min_{a_i\in\mathcal{A}_i}
    \rho_{\nu_i^t(\cdot\mid\theta_i)}^{\rm bp}
    \left( c_i \left( a_i, f_{-i}^{t*}(\theta_{-i}), \theta_i, \theta_{-i} \right) \right).
\end{equation*}
The corresponding optimal value characterizes the minimal risk that player $i$ can attain at type $\theta_i$ by adopting the equilibrium strategy $f_i^{t*}(\theta_i)$.
Since this minimal risk value depends on the player's own type parameter $\theta_i$, it may be regarded as a random function that maps the player's own (random) type to the risk induced by the uncertainty over the rivals' types, that is,
\begin{eqnarray*}
    \theta_i \mapsto \rho_{\nu_i^t(\cdot\mid\theta_i)}^{\rm bp}
    \left( c_i \left( f_i^{t*}(\theta_i), f_{-i}^{t*}(\theta_{-i}), \theta_i, \theta_{-i} \right) \right).
\end{eqnarray*}
Accordingly, the player seeks to assess the overall risk of the equilibrium strategy across all possible realizations of its own type via
\begin{eqnarray}
\label{eqn:nested-1}
    \rho_{\nu_{i,\theta_i}^t}^{\rm ex} \left( \rho_{\nu_i^t(\cdot\mid\theta_i)}^{\rm bp}
    \left( c_i \left( f_i^{t*}(\theta_i), f_{-i}^{t*}(\theta_{-i}), \theta_i, \theta_{-i} \right) \right)
    \right),
\end{eqnarray}
where $\nu_{i,\theta_i}^t$ denotes the marginal distribution of $\theta_i$ induced by the Bayesian predictive distribution $\nu_i^t$ defined in \eqref{eqn:bpd-def}, and $\rho_{\nu_{i}^t}^{\rm ex}$ is a law-invariant risk measure that captures the ex-ante uncertainty of $\theta_i$.
Based on the risk evaluated in the ex-ante stage \eqref{eqn:nested-1}, we define the ex-ante counterpart of interim BNE-BPD as follows.

\begin{definition} [Ex-ante BNE-BPD]
\label{def:exante-bnebpd}
    A pure-strategy ex-ante Bayesian Nash equilibrium with Bayesian predictive distribution (ex-ante BNE-BPD) at round $t$ is an $n$-tuple $f^{t*}:=(f_1^{t*},\dots,f_n^{t*})\in\mathcal{F}$ mapping from $\Theta_1\times\cdots\times\Theta_n$ to $\mathcal{A}_1\times\cdots\times\mathcal{A}_n$ such that for each $i\in N$ 
    \begin{equation}
\label{eqn:exante-BNE-BPD}
    f_i^{t*}
    \in \arg\min_{f_i\in\mathcal{F}_i}
    \rho_{\nu_{i,\theta_i}^t}^{\rm ex} \left( \rho_{\nu_i^t(\cdot\mid\theta_i)}^{\rm bp}
    \left( c_i \left( f_i(\theta_i), f_{-i}^{t*}(\theta_{-i}), \theta_i, \theta_{-i} \right) \right)
    \right).
\end{equation}
    where $\nu_i, i=1,\dots,n$ are defined in \eqref{eqn:bpd-def}.
\end{definition}

The next theorem establishes that the interim BNE-BPD in Definition~\ref{def:BNE-BPD}, 
obtained by solving a series of decision-making problems~\eqref{eqn:BNE-BPD} for all realizations of $\theta_i$, 
is equivalent to the ex-ante BNE-BPD in Definition~\ref{def:exante-bnebpd}, obtained by solving a single optimization problem for each player whose decision variable is the strategy function as formulated in~\eqref{eqn:exante-BNE-BPD}, provided that both equilibria exist. We will discuss the existence and uniqueness of the equilibria in Section~\ref{sec:exist-unique}.

\begin{theorem} [Relationship between ex-ante BNE-BPD and interim BNE-BPD]
\label{thm:reformulation-bpd}
    Suppose that Assumption~\ref{basic-assumption} holds
    and $\rho^{\rm{ex}}$ is strictly monotonic. 
    Then $f^{t*}$ is almost surely a measurable interim BNE-BPD  if and only if 
    it is an ex-ante BNE-BPD, i.e., for $i\in N$,
    \begin{eqnarray} \label{eqn:equivalent-BNE1}
        f_i^{t*}\in \mathop{\arg\min}_{f_i\in \mathcal{F}_i} \rho_{\nu_{i,\theta_i}^t}^{\rm{ex}}\left(
    \rho_{\nu_i^t(\cdot|\theta_i)}^{\rm{bp}} \left( c_{i}\left(f_{i}(\theta_i),f_{-i}^{t*}(\theta_{-i}),\theta_{i},\theta_{-i}\right) \right)\right),
    \end{eqnarray}
    or equivalently 
    \begin{eqnarray} \label{eqn:equivalent-BNE2}
        f^{t*}\in \arg\min_{f\in \mathcal{F}}\sum_{i=1}^n \rho_{\nu_{i,\theta_i}^t}^{\rm{ex}}\left(
    \rho_{\nu_i^t(\cdot|\theta_i)}^{\rm{bp}} \left( c_{i}\left(f_{i}(\theta_i),f_{-i}^{t*}(\theta_{-i}),\theta_{i},\theta_{-i}\right) \right)\right).
    \end{eqnarray}
    Moreover, $f^{t*}$ is a continuous interim BNE-BPD if and only if it
is a continuous ex-ante BNE-BPD, i.e., \eqref{eqn:equivalent-BNE1} and
\eqref{eqn:equivalent-BNE2} hold with the feasible sets $\mathcal{F}_i$ and
$\mathcal{F}$ replaced by the continuous strategy spaces $\mathcal{C}_i$ and $\mathcal{C}$, respectively.
\end{theorem}

\noindent
\textbf{Proof.}
Optimality condition \eqref{eqn:equivalent-BNE2} is immediately derived by \eqref{eqn:equivalent-BNE1}. 
We only prove the equivalence between the interim BNE-BPD and ex-ante BNE-BPD. 

We first establish the equivalence for measurable BNE-BPD. 
For the ``only if'' part,  $f^{t*}$ is almost surely an interim BNE-BPD, i.e., for $i\in N$ and for $\theta_i\in \Theta_i$ almost surely,
\begin{eqnarray}\label{proof:equivalent-1}
    f_i^{t*}(\theta_i)\in \arg\min_{a_i\in \mathcal{A}_i} 
    \rho_{\nu_i^t(\cdot|\theta_i)}^{\rm{bp}} \left( c_{i}\left(a_i,f_{-i}^{t*}(\theta_{-i}),\theta_{i},\theta_{-i}\right) \right).
\end{eqnarray}  
The monotonicity of $\rho_{\nu_{i,\theta_i}^t}^{\rm{ex}}$ implies that $f_i^{t*}$ is also an optimal solution to  
\begin{eqnarray} \label{proof:equivalent-2}
    \min_{f_i\in \mathcal{F}_i} \rho_{\nu_{i,\theta_i}^t}^{\rm{ex}} \left(
    \rho_{\nu_i^t(\cdot|\theta_i)}^{\rm{bp}} \left( c_{i}\left(f_{i}(\theta_i),f_{-i}^{t*}(\theta_{-i}),\theta_{i},\theta_{-i}\right) \right)\right),
\end{eqnarray}
and thus $f^{t*}$ is an ex-ante BNE-BPD.

Conversely, for the ``if'' part, $f^{t*}$ is an ex-ante BNE-BPD, i.e., $f_i^{t*}$ is the optimal solution to \eqref{proof:equivalent-2}.
By Proposition 2.1 in \cite{shapiro2017interchangeability},
the strict monotonicity implies that,
for $\theta_i\in \Theta_i$ almost surely,  \eqref{proof:equivalent-1} holds.
Therefore, $f^{t*}$ is almost surely an interim BNE-BPD.

We next turn to the continuous BNE-BPD. 
The ``only if'' part follows by the same argument as in the measurable case.
It remains to prove the ``if'' part.
Assume for the sake of a contradiction that $f^{t*}$ is a continuous ex-ante BNE-BPD but not a continuous interim BNE-BPD. 
Then, there exist some $i\in N$ and $\tilde{f}_i^t\in \mathcal{C}_i$ such that for some $\tilde{\theta}_i\in \Theta_i$,
\begin{eqnarray}
    \rho_{\nu_i^t(\cdot|\tilde{\theta}_i)}^{\rm{bp}} \left( c_{i}\left(\tilde{f}_i^{t}(\tilde{\theta}_i),f_{-i}^{t*}(\theta_{-i}),\tilde{\theta}_{i},\theta_{-i}\right) \right)
    < \rho_{\nu_i^t(\cdot|\tilde{\theta}_i)}^{\rm{bp}} \left( c_{i}\left(f_i^{t*}(\tilde{\theta_i}),f_{-i}^{t*}(\theta_{-i}),\tilde{\theta}_{i},\theta_{-i}\right) \right).
\end{eqnarray}
Since $c$ is continuous on $\mathcal{A}_i\times \mathcal{A}_{-i}\times \Theta_i\times \Theta_{-i}$, there exists a neighborhood $\mathbb{B}(\tilde{\theta}_i)\subset \Theta_i$ of $\tilde{\theta}_i$ such that 
\begin{eqnarray} \label{eqn:proof-equivalent}
    \rho_{\nu_i^t(\cdot|{\theta}_i)}^{\rm{bp}} \left( c_{i}\left(\tilde{f}_i^{t}({\theta}_i),f_{-i}^{t*}(\theta_{-i}),\theta_{i},\theta_{-i}\right) \right)
    < \rho_{\nu_i^t(\cdot|{\theta}_i)}^{\rm{bp}} \left( c_{i}\left(f_i^{t*}({\theta}_i),f_{-i}^{t*}(\theta_{-i}),\theta_{i},\theta_{-i}\right) \right), \forall \theta_i \in \mathbb{B}(\tilde{\theta}_i).
\end{eqnarray}
Consequently, we can construct a continuous function $\hat{f}_i^t(\theta_i)$ such that $\hat{f}_i^t(\theta_i) =
f_i^{t*}(\theta_i)$ for $\theta_i \notin \mathbb{B}(\tilde{\theta}_i)$
and $\hat{f}_i^{t}(\theta_i) =
\tilde{f}_i^t(\theta_i)$ otherwise. Note that in general 
the function values of $\tilde{f}_i^t$ do not necessarily 
meet those of $f^{t*}_i$ at the boundaries of $\mathbb{B}(\tilde{\theta}_i)$, in which case, we may revise the function values of $\hat{f}_i^t$ near the boundaries within 
$\mathbb{B}(\tilde{\theta}_i)$ such that the inequality \eqref{eqn:proof-equivalent} is preserved.
By the strict monotonicity of $\rho^{\rm{ex}}$, we have 
\begin{eqnarray*}
    \rho_{\nu_{i,\theta_i}^t}^{\rm{ex}}\left(
    \rho_{\nu_i^t(\cdot|\theta_i)}^{\rm{bp}} \left( c_{i}\left(\hat{f}_{i}^t(\theta_i),f_{-i}^{t*}(\theta_{-i}),\theta_{i},\theta_{-i}\right) \right)\right) 
    < \rho_{\nu_{i,\theta_i}^t}^{\rm{ex}}\left(
    \rho_{\nu_i^t(\cdot|\theta_i)}^{\rm{bp}} \left( c_{i}\left({f}_{i}^{t*}(\theta_i),f_{-i}^{t*}(\theta_{-i}),\theta_{i},\theta_{-i}\right) \right)\right),
\end{eqnarray*}
which contradicts the assumption that $f^{t*}$ is a continuous ex-ante equilibrium.
Hence, any continuous ex-ante BNE-BPD is also a continuous interim BNE-BPD.
\hfill $\Box$

In the ex-ante BNE-BPD problem in Definition~\ref{def:exante-bnebpd}, 
each player clearly separates the risk arising from the ex-ante uncertainty of its own type
and that arising from the uncertainty of its rivals' types. 
Crucially, the player's risk attitude toward its rivals' types is prescribed as $\rho^{\rm bp}$ for every realization of its own type. 
Therefore, for a fixed rivals' strategy profile $f_{-i}$, once its own type $\theta_i$ is realized, the player's action is determined solely by this prespecified inner risk measure $\rho^{\rm bp}$.
The role of the outer risk measure is merely to aggregate, at the ex-ante
stage, the risks over all potential realizations of $\theta_i$. 
As long as the outer risk measure faithfully incorporates the risk induced by the actions under each realized type, which is guaranteed by strict monotonicity,
it preserves the optimality of the action chosen at each type almost surely. 
Hence the strictly monotonic outer risk measure has no actual effect on the players' response functions, and thus none on the resulting BNE-BPD.

By contrast,
\cite{liu2026games} studies risk-averse Bayesian games through an ex-ante formulation, but proceeds in the opposite direction.
He takes an ex-ante joint risk measure as the primitive object and derives a type-dependent interim risk measure after the player's own type is observed.
In his model, the players do not distinguish between uncertainty of their own type and that of their rivals' types at the ex-ante stage. 
Each player adopts a single joint risk measure
$\rho^{\rm joint}$ to evaluate the risk arising from both sources of uncertainty simultaneously, 
leading to the primitive objective
\begin{eqnarray} \label{eqn:liu-exante}
    \rho_{\eta}^{\rm joint}\!\left(c\bigl(f_i(\theta_i),f_{-i}(\theta_{-i}),\theta_i,\theta_{-i}\bigr)\right),
\end{eqnarray}
where $\eta$ is the joint distribution of the type profile $(\theta_1,\dots,\theta_n)$.
Once its own type is realized, the player revises this risk attitude in the interim stage according to the realized type $\theta_i$ as a type-dependent conditional risk measure $\rho^{{\rm re},\theta_i}$,
and then considers the interim objective for each type $\theta_i$,
\begin{eqnarray}\label{eqn:liu-interim}
    \rho_{\eta(\cdot\mid\theta_i)}^{{\rm re},\theta_i}\!\left(c\bigl(f_i(\theta_i),f_{-i}(\theta_{-i}),\theta_i,\theta_{-i}\bigr)\right),
\end{eqnarray}
where the revised measure $\rho^{{\rm re},\theta_i}$ may vary across realizations of $\theta_i$ and $\eta(\cdot\mid\theta_i)$ denotes the conditional distribution of $\theta_{-i}$. 
\cite{liu2026games} shows that, 
when the players revise their risk attitudes in the interim stage using the decomposition theorem for risk functionals in \cite{pflug2016time}, the ex-ante equilibrium induced by \eqref{eqn:liu-exante}
and the interim equilibrium induced by \eqref{eqn:liu-interim} coincide under suitable conditions.
In their model, the ex-ante formulation with risk measure $\rho^{\rm joint}$ for joint uncertainty is thus the primitive object, and the interim equilibrium arises as its type-contingent revision.

Our model differs from \cite{liu2026games} in both modeling and conceptual perspectives. On the modeling side, each player clearly separates the uncertainty of its own type from that of its rivals' types and 
assigns a distinct risk measure to each, rather than aggregating them through a single joint measure.
Since each player's action is taken at the interim stage where its own type has been observed, 
the strategy is primarily a response to the remaining uncertainty arising from its rivals' types.
On the conceptual side, we take the standard interim equilibrium as the primitive equilibrium concept, 
and introduce the ex-ante formulation only to evaluate the risk of the resulting type-contingent equilibrium strategy before the player's own type is realized. 
Therefore, our ex-ante equilibrium model
is
not intended to revise the interim equilibrium, but
provides a formulation that
preserves the interim equilibrium almost surely under strict monotonicity.

\subsection{Further simplifications under appropriate choices of risk measures}
\label{sec:further-simplications}

Given the equivalence between interim BNE-BPD and ex-ante BNE-BPD established in 
Theorem~\ref{thm:reformulation-bpd},
we can 
next derive 
a reformulation 
of the latter, which may 
simplify
the simulation procedure under appropriate choices of risk measures.
For instance, 
the nested composition of risk measures $\rho_{\nu_{i,\theta_i}^t}^{\rm ex}\circ\rho_{\nu_i^t(\cdot\mid\theta_i)}^{\rm bp}$
may be consolidated into a single expectation taken over the profile of all players' types. 
This will
facilitate 
a 
single-stage simulation
where 
the entire type profile $\theta$ is sampled at once from the Bayesian predictive distribution $\nu_i^t$, and
avoid sampling the rivals' types $\theta_{-i}$ for each $\theta_i$.

Consider the case where $\rho_{\nu_{i,\theta_i}^t} = \mathbb{E}_{\nu_{i,\theta_i}^t}$ and $\rho_{\nu_i^t(\cdot|\theta_i)}^{\rm{bp}}$ takes the
parametric form \eqref{eqn:parametric-risk-measure},
which covers many important risk measures as 
discussed in Section \ref{sec:preliminary}.
We have the following equivalence result.

\begin{corollary} \label{corollary:reformuation-parametric}
Consider
the setting of 
Theorem~\ref{thm:reformulation-bpd} and 
the specific case that $\rho_{\nu_{i,\theta_i}^t}^{\rm{ex}} = \mathbb{E}_{\nu_{i,\theta_i}^t}$, for $i\in N$.
    Then the following assertions hold.
    \begin{itemize}
        \item[(i)]$f^{t*}$ is almost surely a measurable interim BNE-BPD if and only if 
    it is an ex-ante BNE-BPD, 
    i.e., for $i\in N$,
    \begin{eqnarray} \label{eqn:equivalent-BNE-expectation1}
        f_i^{t*}\in \mathop{\arg\min}_{f_i\in \mathcal{F}_i} \mathbb{E}_{\nu_{i,\theta_i}^t} \left[
    \rho_{\nu_i^t(\cdot|\theta_i)}^{\rm{bp}} \left( c_{i}\left(f_{i}(\theta_i),f_{-i}^{t*}(\theta_{-i}),\theta_{i},\theta_{-i}\right) \right)\right].
    \end{eqnarray}

        \item[(ii)] Assume in addition that 
$\rho^{\rm{bp}}(\xi):=\inf_{s \in \mathcal{S}}\mathbb{E}[\Psi(\xi,s)]$.
Then 
$f^{t*}$ is almost surely a measurable interim BNE-BPD
if and only if,
for $i\in N$,
\begin{eqnarray} \label{eqn:equivalent-BNE-expectation2}
    f_i^{t*}\in \mathop{\arg\min}_{f_i\in \mathcal{F}_i} \inf_{\varphi(\cdot) \in \mathcal{M}_\mathcal{S}} \mathbb{E}_{\nu_{i}^t} \left[ \Psi \left( c(f_i(\theta_i), f_{-i}^{t*}(\theta_{-i}),\theta_i, \theta_{-i}), \varphi(\theta_i) \right) \right],
\end{eqnarray}
where $\mathcal{M}_\mathcal{S}$ is the set of measurable functions from $\Theta_i$ to $\mathcal{S}$.

    \end{itemize}
\end{corollary}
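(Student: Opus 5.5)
Part (i) is Theorem~\ref{thm:reformulation-bpd} specialized to $\rho^{\rm ex}_{\nu^t_{i,\theta_i}}=\mathbb{E}_{\nu^t_{i,\theta_i}}$. To apply it, I only need the expectation to be strictly monotonic. If $\xi\ge\zeta$ almost surely and $\mathbb{P}(\xi>\zeta)>0$, then $\mathbb{E}[\xi-\zeta]>0$. The inner value $\theta_i\mapsto\rho^{\rm bp}_{\nu_i^t(\cdot|\theta_i)}(c_i(\cdots))$ is bounded by $\bar c_i$, by monotonicity and cash invariance together with \eqref{eqn:c-bound}. Hence every expectation involved is finite and the strict inequality is meaningful. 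Theorem~\ref{thm:reformulation-bpd} then yields \eqref{eqn:equivalent-BNE-expectation1} directly.

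For part (ii), I would start from (i) and show that, for each fixed $f_i\in\mathcal{F}_i$,
\[
\mathbb{E}_{\nu^t_{i,\theta_i}}\Big[\inf_{s\in\mathcal S}\mathbb{E}_{\nu_i^t(\cdot|\theta_i)}\big[\Psi(c_i(f_i(\theta_i),f^{t*}_{-i}(\theta_{-i}),\theta_i,\theta_{-i}),s)\big]\Big]
=\inf_{\varphi\in\mathcal M_{\mathcal S}}\mathbb{E}_{\nu_i^t}\big[\Psi(c_i(\cdots),\varphi(\theta_i))\big].
\]
Once this identity holds, the two objectives agree as functions of $f_i$. Their argmin sets over $\mathcal F_i$ therefore coincide, and (ii) follows from (i).

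The identity itself has two ingredients. First, I rewrite the inner conditional expectation using the disintegration $q_i^t(\theta)=q^t_{i,\theta_i}(\theta_i)\,q_i^t(\theta_{-i}|\theta_i)$ from \eqref{eqn:conditional-bpd}. For any measurable $\varphi$, the tower property gives $\mathbb{E}_{\nu_i^t}[\Psi(c_i,\varphi(\theta_i))]=\mathbb{E}_{\nu^t_{i,\theta_i}}\big[g(\theta_i,\varphi(\theta_i))\big]$, where $g(\theta_i,s):=\mathbb{E}_{\nu_i^t(\cdot|\theta_i)}[\Psi(c_i,s)]$. Second, I interchange the expectation over $\theta_i$ with the infimum over $s$:
\[
\inf_{\varphi\in\mathcal M_{\mathcal S}}\mathbb{E}[g(\theta_i,\varphi(\theta_i))]=\mathbb{E}\big[\inf_{s\in\mathcal S} g(\theta_i,s)\big].
\]
This is the interchangeability principle, for which I would invoke \cite{shapiro2017interchangeability}, the reference already used in the proof of Theorem~\ref{thm:reformulation-bpd}, or equivalently Rockafellar--Wets Theorem~14.60.

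The inequality ``$\ge$'' is immediate: $g(\theta_i,\varphi(\theta_i))\ge\inf_s g(\theta_i,s)$ pointwise in $\theta_i$. The inequality ``$\le$'' is the main obstacle. It needs a measurable $\varepsilon$-minimizer selection $\theta_i\mapsto\varphi_\varepsilon(\theta_i)$ and measurability of $\theta_i\mapsto\inf_s g(\theta_i,s)$. Both follow if $g$ is a normal integrand, which I would verify by showing that $g$ is Carathéodory when $\Psi$ is continuous in $s$ and $\mathcal S$ is a closed subset of a Euclidean space. In that case the infimum over $s$ can be taken over a countable dense set, and a measurable $\varepsilon$-optimal selection exists by standard measurable selection theorems. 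I also need integrability so that neither side equals $-\infty$ trivially. The inner value lies in $[-\bar c_i,\bar c_i]$, and $\varphi_\varepsilon$ can be chosen so that $g(\theta_i,\varphi_\varepsilon(\theta_i))\le\inf_s g(\theta_i,s)+\varepsilon$, which is bounded above. Letting $\varepsilon\downarrow0$ gives ``$\le$''. If the paper's standing well-definedness assumption does not already cover these regularity conditions on $\Psi$, I would state them explicitly as the conditions under which (ii) is proved.
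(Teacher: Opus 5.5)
Your proposal is correct and follows the paper's proof essentially step for step. Part (i) is Theorem~\ref{thm:reformulation-bpd} applied with the strictly monotone expectation as the outer risk measure. For part (ii), you rewrite the objective using the parametric form of $\rho^{\rm bp}$, use the interchangeability principle to turn the inner infimum over $s$ into an outer infimum over measurable $\varphi$, and merge the nested expectations into $\mathbb{E}_{\nu_i^t}$ by the tower property. The only difference is that you make explicit the normal-integrand, measurable-selection and finiteness conditions under which interchangeability holds, whereas the paper cites the principle without checking them.
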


\noindent
\textbf{Proof.} Part (i). 
Since 
the mathematical expectation operator is strictly monotonic, 
the equivalence follows from Theorem \ref{thm:reformulation-bpd}.

Part (ii). With the specific form of $\rho^{\rm{bp}}(\xi)$, 
we can rewrite
\eqref{eqn:equivalent-BNE-expectation1}  as
\begin{eqnarray} \label{proof:equivalent-BNE-expectation1}
    f_i^{t*}\in \mathop{\arg\min}_{f_i\in \mathcal{F}_i} \mathbb{E}_{\nu_{i,\theta_i}^t} \left[
    \inf_{s \in \mathcal{S}} \mathbb{E}_{\nu_i^t(\cdot|\theta_i)} \left[ \Psi\left( c_{i}\left(f_{i}(\theta_i),f_{-i}^{t*}(\theta_{-i}),\theta_{i},\theta_{-i}\right),s \right)\right]\right].
\end{eqnarray}
By the interchangeability principle (see, e.g., Theorem~2.2 in \cite{hiai1977integrals},
Theorem~9.108 in \cite{shapiro2014lectures},
or Lemma~3 in \cite{liu2025preference}),
\begin{eqnarray} \label{proof:equivalent-BNE-expectation2}
    && \mathbb{E}_{\nu_{i,\theta_i}^t} \left[
    \inf_{s \in \mathcal{S}} \mathbb{E}_{\nu_i^t(\cdot|\theta_i)} \left[ \Psi\left( c_{i}\left(f_{i}(\theta_i),f_{-i}^{t*}(\theta_{-i}),\theta_{i},\theta_{-i}, s\right) \right)\right]\right] \nonumber\\
    &=& \inf_{\varphi(\cdot) \in \mathcal{M}_\mathcal{S}} \mathbb{E}_{\nu_{i,\theta_i}^t} \left[ \mathbb{E}_{\nu_i^t(\cdot|\theta_i)} \left[ \Psi\left( c_{i}\left(f_{i}(\theta_i),f_{-i}^{t*}(\theta_{-i}),\theta_{i},\theta_{-i}, \varphi(\theta_i)\right) \right)\right]\right] \nonumber\\
    &= &\inf_{\varphi(\cdot) \in \mathcal{M}_\mathcal{S}} \mathbb{E}_{\nu_{i}^t} \left[  \Psi\left( c_{i}\left(f_{i}(\theta_i),f_{-i}^{t*}(\theta_{-i}),\theta_{i},\theta_{-i}, \varphi(\theta_i)\right) \right)\right].
\end{eqnarray}
Substituting \eqref{proof:equivalent-BNE-expectation2} into \eqref{proof:equivalent-BNE-expectation1}, we 
obtain
\eqref{eqn:equivalent-BNE-expectation2}.
The equivalence argument follows from Part~(i).
\hfill $\Box$

Corollary~\ref{corollary:reformuation-parametric} implies that, if one seeks to compute the interim BNE-BPD by solving the corresponding ex-ante BNE-BPD, it suffices to set the outer risk measure as the expectation operator, namely, $\rho_{\nu_{i,\theta_i}^t}^{\rm ex}=\mathbb{E}_{\nu_{i,\theta_i}^t}$. Moreover, when the inner risk measure $\rho_{\nu_i^t(\cdot\mid\theta_i)}^{\rm bp}$ admits the parametric form in \eqref{eqn:parametric-risk-measure}, the nested composition $\rho_{\nu_{i,\theta_i}^t}^{\rm ex}\circ\rho_{\nu_i^t(\cdot\mid\theta_i)}^{\rm bp}$ reduces to a single expectation under $\nu_i^t$ over the entire type profile. 
Consequently, the objective function can be evaluated through a single-stage simulation in which the full type vector $\theta$ is sampled directly from the Bayesian predictive distribution $\nu_i^t$. This avoids the need to repeatedly sample rivals' types $\theta_{-i}$ conditional on each realization of $\theta_i$. As discussed in Section~\ref{sec:preliminary}, a variety of classical risk measures can be recovered within this framework through appropriate choices of the function $\Psi$.

\begin{corollary} \label{corollary:oce-cvar-ent}
    Consider the setting and conditions of Corollary~\ref{corollary:reformuation-parametric}.
    If the inner risk measure is given by an OCE of a utility function $u:\R\rightarrow\R$ defined in \eqref{eqn:oce},
    then the following assertions hold.
    \begin{itemize}
        \item[(i)] $f^{t*}$ is almost surely a measurable interim BNE-BPD
        if and only if,
        for $i\in N$,
        \begin{eqnarray} \label{eqn:equivalent-BNE-oce}
            f_i^{t*}\in \mathop{\arg\min}_{f_i\in \mathcal{F}_i} \inf_{\varphi(\cdot) \in \mathcal{M}_\mathcal{S}} \mathbb{E}_{\nu_{i}^t} \left[ \varphi(\theta_i) - u \left(   \varphi(\theta_i) -c(f_i(\theta_i), f_{-i}^{t*}(\theta_{-i}),\theta_i, \theta_{-i}) \right) \right].
        \end{eqnarray}

        \item[(ii)] If, in particular, the inner risk measure is the CVaR with $u(x)=\frac{1}{1-\beta}x$ for $x\leq 0$ and $u(x)=0$ for $x>0$, 
        then $f^{t*}$ is almost surely a measurable interim BNE-BPD
        if and only if,
        for $i\in N$,
        \begin{eqnarray} \label{eqn:equivalent-BNE-cvar}
            f_i^{t*}\in \mathop{\arg\min}_{f_i\in \mathcal{F}_i} \inf_{\varphi(\cdot) \in \mathcal{M}_\mathcal{S}} \mathbb{E}_{\nu_{i}^t} \left[ \varphi(\theta_i) + \frac{1}{1-\beta} \left( c(f_i(\theta_i), f_{-i}^{t*}(\theta_{-i}),\theta_i, \theta_{-i}) - \varphi(\theta_i) \right)_+ \right].\qquad
        \end{eqnarray}

        \item[(iii)] If, in particular, the inner risk measure is the entropic risk measure with $u(x)=\frac{1}{\gamma}(1-e^{-\gamma x})$ and $\gamma>0$, 
        then $f^{t*}$ is almost surely a measurable interim BNE-BPD
        if and only if,
        for $i\in N$,
        \begin{eqnarray} \label{eqn:equivalent-BNE-ent}
            f_i^{t*}\in \mathop{\arg\min}_{f_i\in \mathcal{F}_i} \mathbb{E}_{\nu_{i}^t} \left[ \exp\left( \gamma c(f_i(\theta_i), f_{-i}^{t*}(\theta_{-i}),\theta_i, \theta_{-i}) \right)  \right].
        \end{eqnarray}
    \end{itemize}

\end{corollary}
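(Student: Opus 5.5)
Each part will be obtained by specializing Corollary~\ref{corollary:reformuation-parametric}(ii) to the relevant choice of $\Psi$. The setting is unchanged: $\rho^{\rm ex}_{\nu^t_{i,\theta_i}}=\mathbb{E}_{\nu^t_{i,\theta_i}}$, and the inner risk measure has the parametric form \eqref{eqn:parametric-risk-measure}.

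For part (i), the OCE \eqref{eqn:oce} corresponds to $\mathcal S=\R$ and $\Psi(\xi,s)=s-u(s-\xi)$. Substituting this $\Psi$ into \eqref{eqn:equivalent-BNE-expectation2} gives \eqref{eqn:equivalent-BNE-oce} directly, with $\mathcal M_{\mathcal S}$ the measurable maps $\Theta_i\to\R$. The interchangeability principle used in Corollary~\ref{corollary:reformuation-parametric} needs two things: the integrand must be a normal integrand in $(s,\theta)$, and the infimum must be finite. Both follow because $u$ is closed, concave and nondecreasing, so $\Psi$ is continuous in $s$, and because $c_i$ is bounded by Assumption~\ref{basic-assumption}, which gives $\rho(\xi)\in[-\bar c_i,\bar c_i]$.

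For part (ii), we plug in $u(x)=\frac{1}{1-\beta}\min\{x,0\}$. This gives $s-u(s-\xi)=s+\frac{1}{1-\beta}(\xi-s)_+$, and \eqref{eqn:equivalent-BNE-cvar} follows by substitution. The infimum over $s$ is attained at the $\beta$-quantile, which lies in $[-\bar c_i,\bar c_i]$, so the measurable selection is harmless.

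Part (iii) takes slightly more work, since we must remove the inner infimum over $\varphi$. With $u(x)=\frac1\gamma(1-e^{-\gamma x})$ the integrand becomes
\[
\varphi(\theta_i)-\tfrac1\gamma+\tfrac1\gamma e^{-\gamma\varphi(\theta_i)}e^{\gamma c_i}.
\]
I will not minimize $\varphi$ jointly with $f_i$. Instead I will return to the nested form \eqref{eqn:equivalent-BNE-expectation1} of Corollary~\ref{corollary:reformuation-parametric}(i), where the inner value is explicit: $\rho^{\rm ent}=\frac1\gamma\ln\mathbb{E}_{\nu_i^t(\cdot|\theta_i)}[e^{\gamma c_i}]$. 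Then I apply Theorem~\ref{thm:reformulation-bpd} with a different strictly monotonic outer functional:
\[
\rho^{\rm ex}(X):=\tfrac1\gamma\ln\mathbb{E}_{\nu^t_{i,\theta_i}}[e^{\gamma X}].
\]
This is the entropic measure, which is strictly monotonic. The composition telescopes by the tower property:
\[
\tfrac1\gamma\ln \mathbb{E}_{\nu^t_{i,\theta_i}}\Bigl[\mathbb{E}_{\nu_i^t(\cdot|\theta_i)}[e^{\gamma c_i}]\Bigr]=\tfrac1\gamma\ln\mathbb{E}_{\nu_i^t}[e^{\gamma c_i}].
\]
Since $x\mapsto\frac1\gamma\ln x$ is strictly increasing, its argmin over $f_i$ coincides with that of $\mathbb{E}_{\nu_i^t}[e^{\gamma c_i}]$, which is \eqref{eqn:equivalent-BNE-ent}. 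Theorem~\ref{thm:reformulation-bpd} guarantees that the interim equilibrium does not depend on which strictly monotonic outer measure is used, so this is legitimate.

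There is a second route that stays with the expectation outer measure. Take the first-order pointwise minimizer $\varphi^*(\theta_i)=\frac1\gamma\ln\mathbb{E}_{\nu_i^t(\cdot|\theta_i)}[e^{\gamma c_i}]$. This yields $\mathbb{E}_{\nu^t_{i,\theta_i}}\bigl[\frac1\gamma\ln\mathbb{E}_{\nu_i^t(\cdot|\theta_i)}[e^{\gamma c_i}]\bigr]$. Its minimizers over $f_i$ agree with those of $\mathbb{E}_{\nu_i^t}[e^{\gamma c_i}]$ because both problems decouple pointwise in $\theta_i$: each reduces almost surely to minimizing $\mathbb{E}_{\nu_i^t(\cdot|\theta_i)}[e^{\gamma c_i(a_i,\dots)}]$ over $a_i$, by the argument of Theorem~\ref{thm:reformulation-bpd}.

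The main obstacle is exactly this identification in (iii). The objective in \eqref{eqn:equivalent-BNE-ent} is not equal to the value in \eqref{eqn:equivalent-BNE-oce}; only the argmin sets coincide. The proof therefore has to go through the almost-sure pointwise decoupling, using strict monotonicity and the measurable-selection argument of \cite{shapiro2017interchangeability}, rather than through an equality of values.
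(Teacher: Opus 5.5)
Your proposal is correct. Parts (i) and (ii) are the same direct substitution of $\Psi$ that the paper performs. Your second route for (iii) is essentially the paper's argument. The paper keeps $\rho^{\rm ex}=\mathbb{E}$ and the OCE form \eqref{eqn:equivalent-BNE-oce}, fixes an optimal $\varphi^*$, and decouples pointwise in $\theta_i$ by the interchangeability principle. It then observes that $\varphi^*(\theta_i)+\frac{1}{\gamma}e^{-\gamma\varphi^*(\theta_i)}\mathbb{E}_{\nu_i^t(\cdot\mid\theta_i)}[e^{\gamma c_i}]$ is an increasing affine function of the conditional exponential moment, so $\varphi^*$ drops out of the argmin in $a_i$, and it re-aggregates via interchangeability.

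Your main route is genuinely different. You use the fact that neither side of the equivalence in (iii) involves $\rho^{\rm ex}$, so Theorem~\ref{thm:reformulation-bpd} may be invoked with any strictly monotonic outer measure. Choosing the entropic measure with the same $\gamma$ collapses the nested composition, by the tower property, exactly to $\frac{1}{\gamma}\ln\mathbb{E}_{\nu_i^t}[e^{\gamma c_i}]$. This buys three things:
\begin{itemize}
\item an identity of objective values rather than merely of argmin sets;
\item a single appeal to the theorem that delivers both directions of the equivalence at once;
\item no need to deal with the fact that the optimal $\varphi^*$ depends on $f_i$, which the paper's step of fixing an optimal $\varphi^*$ has to work around.
\end{itemize}
The paper's route, in turn, stays inside the stated setting $\rho^{\rm ex}=\mathbb{E}$ and shows explicitly how (iii) specializes the general $\varphi$-reformulation of part (i). Both arguments ultimately rely on structure peculiar to the exponential utility: the paper uses multiplicative separability in $\varphi$, and you use the recursivity of the entropic measure. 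So neither transfers verbatim to CVaR or a general OCE.
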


\noindent
\textbf{Proof.}
Parts (i) and (ii) follow directly from substituting the corresponding forms of $\Psi$.
It remains to prove part (iii).
Set $u(x)=\frac{1}{\gamma}(1-e^{-\gamma x})$ in  \eqref{eqn:equivalent-BNE-oce}. 
Then
\begin{eqnarray*}
    f_i^{t*}&\in & \mathop{\arg\min}_{f_i\in \mathcal{F}_i} \inf_{\varphi(\cdot) \in \mathcal{M}_\mathcal{S}} \mathbb{E}_{\nu_{i}^t} \left[ \varphi(\theta_i) - \frac{1}{\gamma} \left(1- e^{ -\gamma \left( \varphi(\theta_i) - c(f_i(\theta_i), f_{-i}^{t*}(\theta_{-i}),\theta_i, \theta_{-i}) \right) } \right) \right] \\
    &=& \mathop{\arg\min}_{f_i\in \mathcal{F}_i} \inf_{\varphi(\cdot) \in \mathcal{M}_\mathcal{S}} \mathbb{E}_{\nu_{i}^t} \left[ \varphi(\theta_i) + \frac{1}{\gamma \exp\left(\gamma  \varphi(\theta_i)\right) } 
    \exp\left(\gamma c(f_i(\theta_i), f_{-i}^{t*}(\theta_{-i}),\theta_i, \theta_{-i}) \right)  \right] - \frac{1}{\gamma}.
\end{eqnarray*}
Note that for any optimal $\varphi^*\in \mathcal{M}_\mathcal{S}$,  
\begin{eqnarray*}
f_i^{t*}&\in & \mathop{\arg\min}_{f_i\in \mathcal{F}_i} 
\mathbb{E}_{\nu_{i}^t} \left[ \varphi^*(\theta_i) + \frac{1}{\gamma \exp\left(\gamma  \varphi^*(\theta_i)\right) } 
    \exp\left(\gamma c(f_i(\theta_i), f_{-i}^{t*}(\theta_{-i}),\theta_i, \theta_{-i}) \right)  \right] - \frac{1}{\gamma}\\
    & = &  
    \mathop{\arg\min}_{f_i\in \mathcal{F}_i}  
    \mathbb{E}_{\nu_{i,\theta_i}^t} \left[    \varphi^*(\theta_i) + \frac{1}{\gamma \exp\left(\gamma  \varphi^*(\theta_i)\right) } \mathbb{E}_{\nu_{i}^t(\cdot|\theta_i)} \left[ \exp\left(\gamma c(f_i(\theta_i), f_{-i}^{t*}(\theta_{-i}),\theta_i, \theta_{-i})  \right)\right]\right].
\end{eqnarray*}
By the interchangeability principle, for a.e. $\theta_i \in \Theta_i$,
\begin{eqnarray} \label{eqn:proof-ent}
    f_i^{t*}(\theta_i) &\in& \mathop{\arg\min}_{a_i\in \mathcal{A}_i}\varphi^*(\theta_i) + \frac{1}{\gamma \exp\left(\gamma  \varphi^*(\theta_i)\right) } \mathbb{E}_{\nu_{i}^t(\cdot|\theta_i)} \left[ \exp\left(\gamma c(a_i, f_{-i}^{t*}(\theta_{-i}),\theta_i, \theta_{-i})  \right)\right]\nonumber\\
    &=& 
    \mathop{\arg\min}_{a_i\in \mathcal{A}_i} \mathbb{E}_{\nu_{i}^t(\cdot|\theta_i)} \left[ \exp\left(\gamma c(a_i, f_{-i}^{t*}(\theta_{-i}),\theta_i, \theta_{-i})  \right)\right], 
\end{eqnarray}
which means that the optimal solution $f_i^{t*}$ is independent of the choice of $\varphi^*$.
Consequently, 
we can establish the equivalence between 
\eqref{eqn:proof-ent} and  \eqref{eqn:equivalent-BNE-ent}
underpinned by the interchangeability principle. 
\hfill $\Box$

Corollary~\ref{corollary:oce-cvar-ent} shows that, 
for 
the OCE-type risk measures,
the objective function of the ex-ante BNE-BPD problem can be written as a single expectation over the type profile under $\nu_i^t$. 
The same 
reformulations 
may be obtained 
for other risk measures which 
admit a representation in terms of expectations or  CVaR.
The reformulation is similar to that used
in the risk-averse contextual optimization of \cite{tao2025risk}, where 
the objective is expressed as a nested risk measure with respect to
contextual uncertainty
(corresponding to
$\theta_i$) and 
the problem data uncertainty
(corresponding to
$\theta_{-i}$).
In their setting,
sample data are typically available only for the joint distribution of two types of uncertainties in that 
the conditional sample data for problem data uncertainty are typically unavailable for almost every 
context.
By contrast, the Bayesian learning framework adopted in this paper enables us to learn the underlying continuous distribution, 
and hence to sample from the marginal distribution of $\theta_i$ and the conditional distribution of $\theta_{-i}$ separately. 
Thus,
the reformulation is not motivated by the unavailability of conditional samples, rather it is
intended
to simplify the nested simulation to a single-stage simulation over the joint distribution, which may reduce the computational cost.

\section{Existence and uniqueness of BNE-RABL and BNE-BPD}
\label{sec:exist-unique}

In this section, we move on to discuss the existence and uniqueness of equilibria in the BNE models introduced in Section~\ref{sec:model}.

\subsection{Existence of interim BNE-RABL
}
\label{sec:exist-bne-rabl}

We begin with the interim BNE-RABL. To this end, we first show that each player's decision-making problem is well defined under appropriate conditions. For ease of notation, let the interim objective function of player $i$ in a fixed round $t$ be denoted by
\begin{equation}
\label{eqn:v-rabl-def}
    v_{i,\mu_i^t}^t(a_i,f_{-i},\theta_i)
    :=
    \rho_{\mu_i^t}^{\rm ep}
    \left(
        \rho_{\eta(\cdot\mid\theta_i;\zeta)}^{\rm al}
        \left(
            c_i\left(
                a_i,
                f_{-i}(\theta_{-i}),
                \theta_i,
                \theta_{-i}
            \right)
        \right)
    \right).
\end{equation}
The corresponding optimal value and the set of optimal solutions are denoted by
\begin{equation}
\label{eqn:vartheta-rabl-def}
    \vartheta_{i,\mu_i^t}^t(f_{-i},\theta_i)
    :=
    \min_{a_i\in\mathcal A_i}
    v_{i,\mu_i^t}^t(a_i,f_{-i},\theta_i),
    \qquad
    \mathcal A_{i,\mu_i^t}^{t*}(f_{-i},\theta_i)
    :=
    \arg\min_{a_i\in\mathcal A_i}
    v_{i,\mu_i^t}^t(a_i,f_{-i},\theta_i).
\end{equation}

\begin{assumption}
\label{assump:rabl-common-primitive} 
Let $t$ be fixed.
For each $i\in N$, 
(a)
the loss function $c_i$ is $M_i$-strongly convex in $a_i$ over $\mathcal{A}_i$ uniformly with respect to 
$(a_{-i},\theta_i,\theta_{-i})\in
\mathcal A_{-i}\times\Theta_i\times\Theta_{-i}$, i.e.,
there exists a constant $M_i>0$ such that 
\begin{eqnarray}
\label{eqn:primitive-strong-convexity-cost}
c_i(\lambda a_i+(1-\lambda)a'_i,a_{-i},\theta_i,\theta_{-i}) 
&\leq& 
\lambda c_i(a_i,a_{-i},\theta_i,\theta_{-i})
+
(1-\lambda)c_i(a_i',a_{-i},\theta_i,\theta_{-i}) \nonumber\\
&& -
\frac{M_i}{2}\lambda(1-\lambda)\|a_i-a_i'\|^2, \quad \forall  a_i,a_i'\in\mathcal A_i, 
\lambda\in [0,1] \qquad \qquad
\end{eqnarray}
uniformly for all
$(a_{-i},\theta_i,\theta_{-i})\in
\mathcal A_{-i}\times\Theta_i\times\Theta_{-i}$;
(b)
for each fixed $\theta_i \in \Theta_i$ and any $\epsilon>0$, there exists $\delta>0$ such that
\bgeqn 
    \dd_{\rm TV}
    \left(
        \eta(\cdot\mid\theta_i;\zeta),
        \eta(\cdot\mid\theta_i';\zeta)
    \right)
    \leq \epsilon,
\edeqn
for all $\theta_i'\in \Theta_i$ with $\|\theta_i'- \theta_i\| \leq \delta,$ uniformly for all $\zeta\in \mathcal{Z}$,
where $\dd_{\rm TV}$
denotes the total variation metric  defined by
\begin{eqnarray} \label{eqn:tv-definition-2}
    \dd_{\rm TV} (P,Q):= \frac{1}{2}\sup_{h\in \mathcal{M}, \|h\|_\infty \leq 1} \bigg| \mathbb{E}_P[h(\xi)] - \mathbb{E}_Q[h(\xi)] \bigg|,
\end{eqnarray}
and $\mathcal{M}$ is the set of all measurable functions (see, e.g., \cite{athreya2006measure});
(c)
the risk measures $\rho^{\rm al}$ and $\rho^{\rm ep}$ are law-invariant convex risk measures;
(d)
let $\varrho^{\rm al}$ be the risk functional corresponding to $\rho^{\rm al}$
and $\mathscr{P}(\R)$ be the set of probability measures over $\R$.
$\varrho^{\rm al}: \mathscr{P}(\R)\to \R$ 
is continuous,
i.e., for every $\epsilon>0$, there exists $\delta>0$ such that for any 
$P,Q\in \mathscr{P}(\R)$, 
\bgeqn 
\left| \varrho^{\rm al}(P) - \varrho^{\rm al}(Q)  \right| \leq \epsilon, \quad \inmat{whenever} \; \dd_{\rm TV} (P,Q) \leq \delta. 
\edeqn 
\end{assumption}

Condition~(a) requires the strong convexity of the loss function in player $i$'s own action,
where $M_i$ characterizes 
a lower bound on the curvature of the loss function in the player's own action.
Condition (b) ensures continuity of the conditional distribution with respect to the player’s own type. 
This condition can be verified directly from conditional densities. For example, if the conditional density
$p(\theta_{-i}\mid\theta_i;\zeta)$ exists and, for every $\epsilon>0$, there is
$\delta>0$ such that
\[
    \frac12
    \int_{\Theta_{-i}}
    \left|
        p(\theta_{-i}\mid\theta_i;\zeta)
        -
        p(\theta_{-i}\mid\theta_i';\zeta)
    \right|d\theta_{-i}
    <\epsilon,
\]
for all $\theta_i'$ with $\|\theta_i'- \theta_i\| \leq \delta,$ uniformly for all $\zeta\in \mathcal{Z}$,
then condition~(b) holds.
Condition~(d) is concerned with the continuity of $\rho^{\rm al}$ with respect to variation in the underlying probability distribution under the total variation metric.
This condition is fulfilled by a broad class of commonly used risk measures, in particular those admitting an expectation-based representation, such as the expectation, the CVaR with confidence level strictly less than one, and the entropic risk measure.

The continuity condition (d) is widely employed in the risk management literature, typically with $Q$ deviating from $P$ under the Kantorovich--Wasserstein metric; see, e.g.,~\cite{claus2016advancing,wang2021quantitative}.
Here, we adopt the total variation metric primarily in order to establish the continuity of $\rho_{\eta(\cdot|\theta_i;\zeta)}^{\rm al}(c(a_i, f_{-i}(\theta_{-i}), \theta_i, \theta_{-i}))$ in~$\theta_i$.
This is because continuity under the Wasserstein metric would require
\(c(a_i, f_{-i}(\theta_{-i}), \theta_i, \theta_{-i})\) to be Lipschitz
continuous in \(\theta_{-i}\), which in turn necessitates Lipschitz
continuity of \(f_{-i}\). However, establishing the latter is difficult
in the present setting.
By contrast, continuity with respect to the total variation metric only requires the measurability of $c(a_i, f_{-i}(\theta_{-i}), \theta_i, \theta_{-i})$ in $\theta_{-i}$, as we will see in the proof of the next lemma.

\begin{lemma}
\label{lem:rabl-common-regularity}
Let Assumption~\ref{assump:rabl-common-primitive} hold. 
Then, for each
$i\in N$, the following assertions hold.

\begin{itemize}
    \item[(i)] The objective function 
    $v_{i,\mu_i^t}^t(a_i,f_{-i},\theta_i)$ in \eqref{eqn:v-rabl-def} is continuous on $\mathcal{A}_{i}\times \mathcal{F}_{-i}\times \Theta_i$.

    \item[(ii)] For $f_{-i} \in \mathcal{F}_{-i}$ and $\theta_i\in \Theta_i$, $v_{i,\mu_i^t}^t(a_i,f_{-i},\theta_i)$ is $M_i$-strongly  convex on $\mathcal{A}_i$.

\end{itemize}

\end{lemma}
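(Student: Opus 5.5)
The plan is to handle the two parts separately. For part (i) I first study the inner map
$g(a_i,f_{-i},\theta_i,\zeta):=\varrho^{\rm al}\bigl(P_{a_i,f_{-i},\theta_i,\zeta}\bigr)$, where $P_{a_i,f_{-i},\theta_i,\zeta}$ is the law of $c_i(a_i,f_{-i}(\theta_{-i}),\theta_i,\theta_{-i})$ under $\theta_{-i}\sim\eta(\cdot\mid\theta_i;\zeta)$. I then pass continuity through the outer risk measure.

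Fix a point $(a_i,f_{-i},\theta_i)$ and a nearby point $(a_i',f_{-i}',\theta_i')$. By Assumption~\ref{assump:rabl-common-primitive}(d) it suffices to bound the TV distance between the laws of the two losses, and I split this by the triangle inequality into two pieces.

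\emph{Change of measure.} Keep the integrand $h(\theta_{-i}):=c_i(a_i',f_{-i}'(\theta_{-i}),\theta_i',\theta_{-i})$ and move the measure from $\eta(\cdot\mid\theta_i;\zeta)$ to $\eta(\cdot\mid\theta_i';\zeta)$. Push-forwards do not increase TV distance, because for any bounded measurable test function $\phi$ on $\R$ the composition $\phi\circ h$ is bounded and measurable. This step uses only measurability of $f_{-i}'$, which is exactly why TV rather than Wasserstein was chosen. Assumption (b) then makes this piece small uniformly in $\zeta$.

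\emph{Change of integrand.} Under the fixed measure $\eta(\cdot\mid\theta_i;\zeta)$, replace $(a_i',f_{-i}',\theta_i')$ by $(a_i,f_{-i},\theta_i)$. Here is the main obstacle: TV distance between laws of two random variables that are uniformly close need \emph{not} be small, since distinct point masses are at TV distance $1$. So I will not route this piece through (d). Instead I use monotonicity and cash invariance of $\rho^{\rm al}$. Continuity of $c_i$ on the compact set from Assumption~\ref{basic-assumption} gives uniform continuity. Hence $\|a_i-a_i'\|$, $\|f_{-i}-f_{-i}'\|_\infty$ and $\|\theta_i-\theta_i'\|$ all small imply
\[
\sup_{\theta_{-i}}\bigl|c_i(a_i',f_{-i}'(\theta_{-i}),\theta_i',\theta_{-i})-c_i(a_i,f_{-i}(\theta_{-i}),\theta_i,\theta_{-i})\bigr|\le\epsilon .
\]
A monetary risk measure is $1$-Lipschitz in the sup norm, so $\rho^{\rm al}$ of the two losses differ by at most $\epsilon$, uniformly in $\zeta$.

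Combining the two pieces, $|g(a_i',f_{-i}',\theta_i',\zeta)-g(a_i,f_{-i},\theta_i,\zeta)|$ is small uniformly in $\zeta\in\mathcal Z$. Applying the sup-norm Lipschitz property once more, now to the monetary risk measure $\rho^{\rm ep}_{\mu_i^t}$ evaluated on the random variable $\zeta\mapsto g(\cdot,\zeta)$, yields continuity of $v^t_{i,\mu_i^t}$. Boundedness by $\bar c_i$ in \eqref{eqn:c-bound} guarantees that everything lies in $L^\infty\subset L^p$, so all quantities are well defined.

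For part (ii), fix $f_{-i},\theta_i,\zeta$ and write $X_a:=c_i(a,f_{-i}(\theta_{-i}),\theta_i,\theta_{-i})$. Assumption (a) gives, pointwise in $\theta_{-i}$,
\[
X_{\lambda a+(1-\lambda)a'}\le\lambda X_a+(1-\lambda)X_{a'}-\tfrac{M_i}{2}\lambda(1-\lambda)\|a-a'\|^2 .
\]
Apply $\rho^{\rm al}$, using monotonicity, then cash invariance to pull out the constant, then convexity. This gives the same $M_i$-strong convexity inequality for $g(\cdot,f_{-i},\theta_i,\zeta)$, for every $\zeta$. Repeating the same three steps with $\rho^{\rm ep}$ on these functions of $\zeta$ yields $M_i$-strong convexity of $v^t_{i,\mu_i^t}(\cdot,f_{-i},\theta_i)$ on $\mathcal A_i$.
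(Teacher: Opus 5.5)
Your proposal is correct and follows the paper's proof essentially step for step. For (i), the paper makes the same triangle-inequality split of the inner risk values, uniformly in $\zeta$: the integrand change under the fixed measure $\eta(\cdot\mid\theta_i;\zeta)$ is handled by sup-norm $1$-Lipschitzness together with uniform continuity of $c_i$, and the measure change with fixed integrand is handled by push-forward non-expansiveness of the TV distance together with Assumption~\ref{assump:rabl-common-primitive}(b) and (d), after which sup-norm Lipschitzness of $\rho^{\rm ep}$ is applied. For (ii), the paper uses the same monotonicity, cash invariance and convexity chain, applied first to $\rho^{\rm al}$ and then to $\rho^{\rm ep}$.
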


\noindent
\textbf{Proof.}
\underline{Part (i).}
For any $(a_i,f_{-i},\theta_i)$ and $(a_i',f_{-i}',\theta_i')$, it follows from the Lipschitz continuity of the risk measure w.r.t.~the supremum norm that
\begin{eqnarray} \label{eqn:proof-continuity-0}
    && \left| v_{i,\mu_i^t}^t(a_i,f_{-i},\theta_i) -  v_{i,\mu_i^t}^t(a_i',f_{-i}',\theta_i') \right|\nonumber\\
    &\leq& \left\|  \rho_{\eta(\cdot\mid\theta_i;\zeta)}^{\rm al} \left( c_i(a_i, f_{-i}(\theta_{-i}),\theta_i,\theta_{-i}) \right)  
    - \rho_{\eta(\cdot\mid\theta_i';\zeta)}^{\rm al}
\left( c_i(a_i', f_{-i}'(\theta_{-i}),\theta_i',\theta_{-i}) \right)
    \right\|_{\mathcal{Z},\infty} \nonumber\\
    &\leq & \left\|  \rho_{\eta(\cdot\mid\theta_i;\zeta)}^{\rm al} \left( c_i(a_i, f_{-i}(\theta_{-i}),\theta_i,\theta_{-i}) \right)  
    - \rho_{\eta(\cdot\mid\theta_i;\zeta)}^{\rm al}
\left( c_i(a_i', f_{-i}'(\theta_{-i}),\theta_i',\theta_{-i}) \right)
    \right\|_{\mathcal{Z},\infty}\nonumber\\
    && + \left\|  \rho_{\eta(\cdot\mid\theta_i;\zeta)}^{\rm al} \left( c_i(a_i', f_{-i}'(\theta_{-i}),\theta_i',\theta_{-i}) \right)  
    - \rho_{\eta(\cdot\mid\theta_i';\zeta)}^{\rm al}
\left( c_i(a_i', f_{-i}'(\theta_{-i}),\theta_i',\theta_{-i}) \right)
    \right\|_{\mathcal{Z},\infty}\nonumber\\
    &\leq & \left\|   c_i(a_i, f_{-i}(\theta_{-i}),\theta_i,\theta_{-i})   
    -  c_i(a_i', f_{-i}'(\theta_{-i}),\theta_i',\theta_{-i})
    \right\|_{\Theta_{-i},\infty}\nonumber\\
    && + \left\|  \rho_{\eta(\cdot\mid\theta_i;\zeta)}^{\rm al} \left( c_i(a_i', f_{-i}'(\theta_{-i}),\theta_i',\theta_{-i}) \right)  
    - \rho_{\eta(\cdot\mid\theta_i';\zeta)}^{\rm al}
\left( c_i(a_i', f_{-i}'(\theta_{-i}),\theta_i',\theta_{-i}) \right)
    \right\|_{\mathcal{Z},\infty}.
\end{eqnarray}
Under Assumption~\ref{basic-assumption}(b), for any $\epsilon>0$, there exists $\delta_1>0$ such that 
$    
\|a_i-a_i'\|+\|f_{-i}-f'_{-i}\|_\infty+\|\theta_i-\theta_i'\|\leq \delta_1
$
implies
\begin{eqnarray} \label{eqn:proof-continuity-1}
    \sup_{\theta_{-i}\in \Theta_{-i}}\left|   c_i(a_i, f_{-i}(\theta_{-i}),\theta_i,\theta_{-i})   
    -  c_i(a_i', f_{-i}'(\theta_{-i}),\theta_i',\theta_{-i})
    \right|\leq \frac{\epsilon}{2}.
\end{eqnarray}
Moreover, by Assumption~\ref{assump:rabl-common-primitive}(d),
for the given $\epsilon>0$,
there exists $\delta_2>0$ such that when
\begin{eqnarray} \label{eqn:proof-continuity-3}
    && \dd_{TV}\left( \eta(\cdot\mid\theta_i;\zeta) \circ c_i(a_i', f_{-i}'(\cdot),\theta_i',\cdot)^{-1}, \eta(\cdot\mid\theta_i';\zeta) \circ c_i(a_i', f_{-i}'(\cdot),\theta_i',\cdot)^{-1} \right)
    < \delta_2,
\end{eqnarray}
we have
\begin{eqnarray}\label{eqn:proof-continuity-2}
    &&\left|  \rho_{\eta(\cdot\mid\theta_i;\zeta)}^{\rm al} \left( c_i(a_i', f_{-i}'(\theta_{-i}),\theta_i',\theta_{-i}) \right)  
    - \rho_{\eta(\cdot\mid\theta_i';\zeta)}^{\rm al}
\left( c_i(a_i', f_{-i}'(\theta_{-i}),\theta_i',\theta_{-i}) \right)
    \right|\nonumber\\
    &=&  \left|  \varrho^{\rm al} \left( \eta(\cdot\mid\theta_i;\zeta) \circ c_i(a_i', f_{-i}'(\cdot),\theta_i',\cdot)^{-1} \right)  
    - \varrho^{\rm al} \left( \eta(\cdot\mid\theta_i';\zeta) \circ c_i(a_i', f_{-i}'(\cdot),\theta_i',\cdot)^{-1} \right)
    \right|\leq \frac{\epsilon}{2},\qquad 
\end{eqnarray}
uniformly for $\zeta \in \mathcal{Z}$.
In what follows,
we derive sufficient conditions for 
\eqref{eqn:proof-continuity-3}.
Under Assumption~\ref{assump:rabl-common-primitive} (b),
 there exists $\delta_3>0$ such that
when $\|\theta_i- \theta_i'\| < \delta_3$,
we have
$
    \dd_{TV}\left( \eta(\cdot\mid\theta_i;\zeta), \eta(\cdot\mid\theta_i';\zeta) \right) 
    < \delta_2.
$
Let $T(\theta_{-i}):= c(a_i', f_{-i}'(\theta_{-i}), \theta_i', \theta_{-i})$.
\begin{eqnarray} \label{eqn:nonexpansive}
    \dd_{\rm TV} (\eta(\cdot|\theta_i;\zeta)\circ T^{-1},\eta(\cdot|\theta_i';\zeta)\circ T^{-1})
    &=& \sup_{h\in \mathcal{M}, \|h\|_\infty \leq 1} \bigg|\mathbb{E}_{\eta(\cdot|\theta_i;\zeta)\circ T^{-1}}[h(\xi)] - \mathbb{E}_{\eta(\cdot|\theta_i';\zeta)\circ T^{-1}}[h(\xi)]\bigg|\nonumber\\
    &=& \sup_{h\in \mathcal{M}, \|h\|_\infty \leq 1} \bigg|\mathbb{E}_{\eta(\cdot|\theta_i;\zeta)}[h(T(\xi))] - \mathbb{E}_{\eta(\cdot|\theta_i';\zeta)}[h(T(\xi))]\bigg|\nonumber\\
    &\leq & \sup_{\tilde{h}\in \mathcal{M}, \|\tilde{h}\|_\infty \leq 1} \bigg|\mathbb{E}_{\eta(\cdot|\theta_i;\zeta)}[\tilde{h}(\xi)] - \mathbb{E}_{\eta(\cdot|\theta_i';\zeta)}[\tilde{h}(\xi)]\bigg| \nonumber\\
    &=& \dd_{\rm TV} (\eta(\cdot|\theta_i;\zeta),\eta(\cdot|\theta_i';\zeta)) \leq \delta_2,
\end{eqnarray}
which shows that \eqref{eqn:proof-continuity-3} is satisfied when $\|\theta_i- \theta_i'\| < \delta_3$.
Combining \eqref{eqn:proof-continuity-0}, \eqref{eqn:proof-continuity-1}, and \eqref{eqn:proof-continuity-2}, we obtain $\left| v_{i,\mu_i^t}^t(a_i,f_{-i},\theta_i) -  v_{i,\mu_i^t}^t(a_i',f_{-i}',\theta_i') \right| \leq\epsilon$, which implies the continuity of $v_{i,\mu_i^t}^t(a_i,f_{-i},\theta_i)$.

\underline{Part (ii).}
Let $a_i, a_i' \in \mathcal{A}_i$ and $\lambda\in [0,1]$.
By Assumption~\ref{assump:rabl-common-primitive}(a),
\begin{eqnarray*}
    &&c_i(\lambda a_i+(1-\lambda)a_i',
     f_{-i}(\theta_{-i}),\theta_i,\theta_{-i})\\
&&\quad\leq
\lambda c_i(a_i,f_{-i}(\theta_{-i}),\theta_i,\theta_{-i})
+
(1-\lambda)c_i(a_i',f_{-i}(\theta_{-i}),\theta_i,\theta_{-i})
-\frac{M_i}{2}\lambda(1-\lambda)\|a_i-a_i'\|^2.
\end{eqnarray*}
Using the monotonicity, cash invariance and the convexity of $\rho_{\eta(\cdot\mid\theta_i;\zeta)}^{\rm al}$, we have
\begin{eqnarray*}
    &&\rho_{\eta(\cdot\mid\theta_i;\zeta)}^{\rm al}
\left( c_i(\lambda a_i+(1-\lambda)a_i',
     f_{-i}(\theta_{-i}),\theta_i,\theta_{-i}) \right)\\
&\leq& \rho_{\eta(\cdot\mid\theta_i;\zeta)}^{\rm al}
\bigg(
\lambda c_i(a_i,f_{-i}(\theta_{-i}),\theta_i,\theta_{-i})
+
(1-\lambda)c_i(a_i',f_{-i}(\theta_{-i}),\theta_i,\theta_{-i})\\
&& \qquad\qquad
-\frac{M_i}{2}\lambda(1-\lambda)\|a_i-a_i'\|^2\bigg)\\
&\leq& \lambda \rho_{\eta(\cdot\mid\theta_i;\zeta)}^{\rm al}
\bigg(
c_i(a_i,f_{-i}(\theta_{-i}),\theta_i,\theta_{-i})\bigg)
+
(1-\lambda) \rho_{\eta(\cdot\mid\theta_i;\zeta)}^{\rm al}\bigg(
c_i(a_i',f_{-i}(\theta_{-i}),\theta_i,\theta_{-i})\bigg)\\
&& -\frac{M_i}{2}\lambda(1-\lambda)\|a_i-a_i'\|^2.
\end{eqnarray*}
Using the same argument for $\rho_{\mu_i^t}^{\rm ep}$, we can establish 
\begin{eqnarray*}
    &&\rho_{\mu_i^t}^{\rm ep} \left(\rho_{\eta(\cdot\mid\theta_i;\zeta)}^{\rm al}
        \left(c_i \left(\lambda a_i+(1-\lambda)a_i', f_{-i}(\theta_{-i}), \theta_i, \theta_{-i} \right) \right)
    \right)\\
    &\leq & \rho_{\mu_i^t}^{\rm ep} 
    \Bigg(
       \lambda \rho_{\eta(\cdot\mid\theta_i;\zeta)}^{\rm al}
\bigg(
 c_i(a_i,f_{-i}(\theta_{-i}),\theta_i,\theta_{-i})\bigg)
+
(1-\lambda) \rho_{\eta(\cdot\mid\theta_i;\zeta)}^{\rm al}\bigg(
c_i(a_i',f_{-i}(\theta_{-i}),\theta_i,\theta_{-i})\bigg)\\
&& -\frac{M_i}{2}\lambda(1-\lambda)\|a_i-a_i'\|^2
    \Bigg)\\
    &\leq &  \lambda\rho_{\mu_i^t}^{\rm ep} 
    \Bigg(
       \rho_{\eta(\cdot\mid\theta_i;\zeta)}^{\rm al}
\bigg(
 c_i(a_i,f_{-i}(\theta_{-i}),\theta_i,\theta_{-i})\bigg)
    \Bigg)
+
(1-\lambda)
\rho_{\mu_i^t}^{\rm ep} 
    \Bigg(
 \rho_{\eta(\cdot\mid\theta_i;\zeta)}^{\rm al}\bigg(
c_i(a_i',f_{-i}(\theta_{-i}),\theta_i,\theta_{-i})\bigg)\Bigg)\\
&& -\frac{M_i}{2}\lambda(1-\lambda)\|a_i-a_i'\|^2,
\end{eqnarray*}
which implies the $M_i$-strong convexity of $v_{i,\mu_i^t}^t(a_i,f_{-i},\theta_i)$. 
\hfill $\Box$

This lemma establishes the continuity of each player's objective function and the uniqueness of the optimal solution for each type, and hence ensures the well-definedness of the decision-making problem faced by each player.
We now turn to the existence of BNE-RABL.
A crucial step in this approach is to establish the equicontinuity of the set of optimal response functions.

\begin{lemma}
\label{lem:rabl-schauder-best-response}
Let
$
    \Psi_{i,\mu_i^t}^t(f_{-i})(\theta_i)
    :=
    \mathcal A_{i,\mu_i^t}^{t*}(f_{-i},\theta_i),
$
and 
$
    \Psi_{\mu^t}^t(f):=(\Psi_{1,\mu_1^t}^t(f_{-1}),\ldots,\Psi_{n,\mu_n^t}^t(f_{-n})).
$
Under Assumptions~\ref{basic-assumption} and \ref{assump:rabl-common-primitive}, the following assertions hold.

\begin{enumerate}[(i)]
\item For every $i\in N$, $\Psi_{i,\mu^t}^t(f_{-i})(\theta_i)$ is single-valued for every $(f_{-i},\theta_i)\in \mathcal{F}_{-i} \times \Theta_i$ and continuous over $\mathcal{F}_{-i}\times \Theta_i$.

\item For every $i\in N$, the family
$
    \{\Psi_{i,\mu^t}^t(f_{-i}):f_{-i}\in\mathcal C_{-i}\}
$
is equicontinuous on $\Theta_i$.

\item The operator $\Psi_{\mu^t}^t:\mathcal C\to\mathcal C$ is continuous under the supremum norm.
\end{enumerate}
\end{lemma}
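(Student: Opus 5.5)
Parts (i) and (iii) follow from Lemma~\ref{lem:rabl-common-regularity}, used through a Berge-type argument that exploits strong convexity. The delicate part is (ii), equicontinuity, which must be uniform over $f_{-i}\in\mathcal C_{-i}$.

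For (i), single-valuedness follows from Lemma~\ref{lem:rabl-common-regularity}(ii). The function $v^t_{i,\mu_i^t}(\cdot,f_{-i},\theta_i)$ is $M_i$-strongly convex and continuous on the compact convex set $\mathcal A_i$, so it has exactly one minimizer $a^*(f_{-i},\theta_i)$. For continuity I will use the quadratic growth inequality implied by strong convexity at a constrained minimizer:
\[
v^t_{i,\mu_i^t}(a_i,f_{-i},\theta_i)-v^t_{i,\mu_i^t}(a^*(f_{-i},\theta_i),f_{-i},\theta_i)\ \ge\ \tfrac{M_i}{2}\|a_i-a^*(f_{-i},\theta_i)\|^2 .
\]
Write $a=a^*(f_{-i},\theta_i)$, $a'=a^*(f'_{-i},\theta_i')$ and $v=v^t_{i,\mu_i^t}$. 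Applying the inequality in both directions and adding gives
\[
M_i\|a-a'\|^2\le \big[v(a',f_{-i},\theta_i)-v(a',f'_{-i},\theta'_i)\big]+\big[v(a,f'_{-i},\theta'_i)-v(a,f_{-i},\theta_i)\big].
\]
Each bracket is at most $2\sup_{a_i\in\mathcal A_i}|v(a_i,f_{-i},\theta_i)-v(a_i,f'_{-i},\theta'_i)|$. The proof of Lemma~\ref{lem:rabl-common-regularity}(i) bounds this supremum by two terms. The first is $\sup_{\theta_{-i}}|c_i(a_i,f_{-i}(\theta_{-i}),\theta_i,\theta_{-i})-c_i(a_i,f'_{-i}(\theta_{-i}),\theta'_i,\theta_{-i})|$, which is small by uniform continuity of $c_i$ on its compact domain (Assumption~\ref{basic-assumption}); this holds uniformly in $a_i$ and in $f_{-i}$, and depends only on $\|f_{-i}-f'_{-i}\|_\infty$. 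The second is the total-variation term, which is small uniformly in $\zeta$ and in the pushed-forward loss by Assumption~\ref{assump:rabl-common-primitive}(b),(d) and the nonexpansiveness estimate \eqref{eqn:nonexpansive}. Hence $\|a-a'\|^2\le (2/M_i)\,\omega(\|f_{-i}-f'_{-i}\|_\infty+\|\theta_i-\theta'_i\|)$, where $\omega$ is a modulus that goes to $0$. This proves continuity.

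The key point for (ii) is that this modulus does not depend on $f_{-i}$. Set $f'_{-i}=f_{-i}$ and vary only $\theta_i$. The $c_i$-term is controlled by the uniform continuity modulus of $c_i$, taken over all $a_{-i}\in\mathcal A_{-i}$ rather than along $f_{-i}$. The TV-term is controlled by Assumption~\ref{assump:rabl-common-primitive}(b),(d), whose $\delta$ depends neither on the measurable map $T$ nor on $\zeta$. There is one caveat: Assumption~(b) gives $\delta$ depending on the fixed $\theta_i$. That still suffices for pointwise equicontinuity, which is exactly what statement (ii) claims, and by compactness of $\Theta_i$ Arzel\`a--Ascoli then only needs equicontinuity at each point. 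The main obstacle is checking carefully that the continuity constant in Assumption~(d) is genuinely uniform over all the loss laws that arise. It is, because (d) is stated uniformly over $P,Q\in\mathscr P(\mathbb R)$. I will also need to verify that the outer step through $\rho^{\rm ep}$ is $1$-Lipschitz in the sup norm over $\zeta$. This follows from monotonicity and cash invariance, as already used in \eqref{eqn:proof-continuity-0}.

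For (iii), continuity in $\theta_i$ from (i) shows that $\Psi^t_{i,\mu_i^t}(f_{-i})\in\mathcal C_i$, so $\Psi^t_{\mu^t}$ maps $\mathcal C$ into $\mathcal C$. The displayed bound with $\theta_i=\theta'_i$ gives
\[
\|\Psi^t_{i,\mu_i^t}(f_{-i})-\Psi^t_{i,\mu_i^t}(f'_{-i})\|_\infty^2\le (2/M_i)\,\omega_{c_i}(\|f_{-i}-f'_{-i}\|_\infty).
\]
Here the TV-term vanishes because $\theta_i=\theta'_i$, and the bound is uniform in $\theta_i$. Taking the maximum over $i\in N$ yields continuity of $\Psi^t_{\mu^t}$ in the supremum norm.
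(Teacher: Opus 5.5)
Your proposal is correct. Part (ii) is the paper's argument: fix $f_{-i}$, bound $\sup_{a_i}|v^t_{i,\mu_i^t}(a_i,f_{-i},\theta_i)-v^t_{i,\mu_i^t}(a_i,f_{-i},\theta_i')|$ uniformly over $f_{-i}$ using uniform continuity of $c_i$ and Assumption~\ref{assump:rabl-common-primitive}(b),(d), then pass to the minimizers by quadratic growth.

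You differ from the paper in (i) and (iii). The paper gets continuity in (i) qualitatively, from Berge's maximum theorem and the joint continuity in Lemma~\ref{lem:rabl-common-regularity}(i). That only gives continuity at each $(f_{-i},\theta_i)$, so in (iii) the paper upgrades it to sup-norm continuity of the operator using a finite $\delta$-net of $\Theta_i$ together with the equicontinuity from (ii).

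You instead reuse one quantitative inequality in all three parts:
$M_i\|a-a'\|^2\le 2\sup_{a_i\in\mathcal A_i}|v^t_{i,\mu_i^t}(a_i,f_{-i},\theta_i)-v^t_{i,\mu_i^t}(a_i,f'_{-i},\theta'_i)|$.
You also observe that at $\theta_i=\theta_i'$ the total-variation term vanishes identically. This gives directly
$\|\Psi^t_{i,\mu_i^t}(f_{-i})-\Psi^t_{i,\mu_i^t}(f'_{-i})\|_\infty\le\sqrt{2\,\omega_{c_i}(\|f_{-i}-f'_{-i}\|_\infty)/M_i}$.
That is uniform continuity of the best-response map on all of $\mathcal F_{-i}$, with an explicit modulus depending only on $c_i$ and $M_i$. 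You need (i) only to ensure $\Psi^t_{\mu^t}$ maps $\mathcal C$ into $\mathcal C$, and you need neither the net nor (ii) for (iii).

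Your route is shorter and gives a stronger conclusion. The Berge step in the paper's (i) uses only joint continuity of $v$. However, the uniform-in-$a_i$ estimate is needed for (ii) anyway, so reusing it for (i) and (iii) costs you nothing.

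One harmless slip: each bracket in your summed inequality is bounded by the supremum itself, not by twice it. It is the sum of the two brackets that is at most $2\sup$, and that is what your constant $2/M_i$ actually uses.
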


\noindent
\textbf{Proof.}
\underline{Part (i).}
By the $M_i$-strong convexity of $v_{i,\mu_i^t}^t(a_i,f_{-i},\theta_i)$ over $\mathcal{A}_i$ in Lemma~\ref{lem:rabl-common-regularity}(ii), we immediately obtain that $\Psi_{i,\mu^t}^t(f_{-i})(\theta_i)$ is single-valued for every $\theta_i\in\Theta_i$.
Moreover, by the continuity of $v_{i,\mu_i^t}^t(a_i,f_{-i},\theta_i)$ established in Lemma~\ref{lem:rabl-common-regularity}(i) and the compactness of $\mathcal{A}_i$ in Assumption~\ref{assump:rabl-common-primitive}(a), 
Berge's maximum theorem \citep{berge1963topological} ensures the continuity of $\Psi_{i,\mu^t}^t(f_{-i})(\theta_i)$ over $\mathcal{F}_{-i}\times \Theta_i$.

\underline{Part (ii).}
By the compactness of $\Theta_i$, Assumption~\ref{assump:rabl-common-primitive}(b) guarantees the continuity of $\eta(\cdot|\theta_i;\zeta)$ in $\theta_i$ uniformly with respect to $\zeta$ under the total variation metric.
Likewise, the compactness of $\mathcal{A}\times \Theta_{-i}$ ensures the continuity of $c_i$ over $\Theta_i$ uniformly with respect to $\theta_{-i}$.
Using the same argument as in the proof of Lemma~\ref{lem:rabl-common-regularity}(i), we can show that  
for every $\epsilon >0$, there exists $\delta>0$ such that 
when $\|\theta_i- \theta_i'\|< \delta$, we have
$$\sup_{a_i \in \mathcal{A}_i}\left| v_{i,\mu_i^t}^t(a_i,f_{-i},\theta_i)-v_{i,\mu_i^t}^t(a_i,f_{-i},\theta_i') \right| <\epsilon,$$
uniformly for all $f_{-i}\in \mathcal{C}_{-i}$. We skip the details.

On the other hand, for any $f_{-i}\in \mathcal{C}_{-i}$, let 
$a_i^*:=\Psi_{i,\mu^t}^t(f_{-i})(\theta_i)$ and $a_i^{\prime *}:=\Psi_{i,\mu^t}^t(f_{-i})(\theta_i')$.
Since $v_{i,\mu_i^t}^t(a_i,f_{-i},\theta_i)$ is $M_i$-strongly convex,
\begin{eqnarray*}
    v_{i,\mu_i^t}^t(a_i^{\prime *},f_{-i},\theta_i)
    \geq
    v_{i,\mu_i^t}^t(a_i^*,f_{-i},\theta_i)
    +
    \frac{M_i}{2}\|a_i^{\prime *}-a_i^*\|^2,
\end{eqnarray*}
and thus
\begin{eqnarray*}
    \frac{M_i}{2}\|a_i^{\prime *}-a_i^*\|^2
    &\leq&
    v_{i,\mu_i^t}^t(a_i^{\prime *},f_{-i},\theta_i)
    -
    v_{i,\mu_i^t}^t(a_i^*,f_{-i},\theta_i)\\
    &\leq&
    \left|
    v_{i,\mu_i^t}^t(a_i^{\prime *},f_{-i},\theta_i)
    -
    v_{i,\mu_i^t}^t(a_i^{\prime *},f_{-i},\theta_i')
    \right|\\
    &&+
    \left|
    v_{i,\mu_i^t}^t(a_i^*,f_{-i},\theta_i')
    -
    v_{i,\mu_i^t}^t(a_i^*,f_{-i},\theta_i)
    \right|
    <2\epsilon.
\end{eqnarray*}
Hence,  for any $f_{-i}\in \mathcal{C}_{-i}$
\begin{eqnarray} \label{eqn:proof-best-response-schauder}
    \left\| \Psi_{i,\mu^t}^t(f_{-i})(\theta_i)-\Psi_{i,\mu^t}^t(f_{-i})(\theta_i') \right\| = \|a_i^{\prime *}-a_i^*\| \leq 2 \sqrt{\frac{\epsilon}{M_i}},
\end{eqnarray}
which proves the equicontinuity of $\Psi_{i,\mu^t}^t$ in $\theta_i$.

\underline{Part (iii)}.
Based on the continuity of $\Psi_{i,\mu^t}^t(f_{-i})(\theta_i)$ over $\mathcal{F}_{-i}\times \Theta_i$ in part (i) and the equicontinuity of $\Psi_{i,\mu^t}^t$ over $\Theta_i$ in part (ii), we show that $\Psi_{\mu^t}^t: \mathcal{C}\rightarrow \mathcal{C}$ is a continuous operator.
Since $\Theta_i$ is compact under Assumption~\ref{basic-assumption}(a), there exists a $\delta$-net covering $\Theta_i$, i.e., for any small positive number $\delta \in (0,1)$, there exists a finite number of points $\theta_i^1,\dots,\theta_i^M \in \Theta_i$ such that for any $\theta_i\in \Theta_i$, there is $k\in \{1,\dots,M\}$ such that $\|\theta_i - \theta_i^k\| \leq \delta$. 
Then, by the continuity of $\Psi_{i,\mu^t}^t(f_{-i})(\theta_i)$ in $(f_{-i},\theta_i)$,
for any $\epsilon>0$ and fixed $f_{-i}\in \mathcal{C}_{-i}$, we have $\sup_{k\in\{1,\dots,M\}}|\Psi_{i,\mu^t}^t(f_{-i})(\theta_i^k)-\Psi_{i,\mu^t}^t(f'_{-i})(\theta_i^k)|< \epsilon$ whenever $f'_{-i}$ is sufficiently close to $f_{-i}$.
Together with \eqref{eqn:proof-best-response-schauder}, we can select $\delta$ as in part (ii) such that for any $\theta_i\in\Theta_i$,
\begin{eqnarray*}
    &&\left\| \Psi_{i,\mu^t}^t(f_{-i}')(\theta_i)-\Psi_{i,\mu^t}^t(f_{-i})(\theta_i) \right\|\\
    &&\leq \left\| \Psi_{i,\mu^t}^t(f_{-i}')(\theta_i) - \Psi_{i,\mu^t}^t(f_{-i}')(\theta_i^k) \right\| +\left\| \Psi_{i,\mu^t}^t(f_{-i}')(\theta_i^k) - \Psi_{i,\mu^t}^t(f_{-i})(\theta_i^k) \right\|\\
    && \quad+ \left\| \Psi_{i,\mu^t}^t(f_{-i})(\theta_i^k) - \Psi_{i,\mu^t}^t(f_{-i})(\theta_i) \right\| \leq 4 \sqrt{\frac{\epsilon}{M_i}} + \epsilon,
\end{eqnarray*}
which implies the continuity of $\Psi_{i,\mu^t}^t$ in $\mathcal{C}_{-i}$.
\hfill $\Box$

We are ready to establish the existence of BNE-RABL.

\begin{theorem}
\label{thm:rabl-existence-schauder}
Under Assumptions~\ref{basic-assumption} and \ref{assump:rabl-common-primitive}, 
the BNE-RABL model \eqref{eqn:BNE-heter-belief}
has a continuous equilibrium
$f^{t*}\in\mathcal C$ in round $t$.
\end{theorem}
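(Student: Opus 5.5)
We will obtain the equilibrium as a fixed point of the best-response operator $\Psi_{\mu^t}^t$ from Lemma~\ref{lem:rabl-schauder-best-response}, using Schauder's fixed point theorem on a suitable compact convex subset of $\mathcal C$. By Lemma~\ref{lem:rabl-schauder-best-response}(i) and (iii), $\Psi_{\mu^t}^t$ is single-valued and maps $\mathcal C$ continuously into $\mathcal C$ under the supremum norm. Any fixed point $f^{t*}=\Psi_{\mu^t}^t(f^{t*})$ satisfies $f_i^{t*}(\theta_i)=\mathcal A_{i,\mu_i^t}^{t*}(f_{-i}^{t*},\theta_i)$ for every $i$ and $\theta_i$, which is exactly \eqref{eqn:BNE-heter-belief}.

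\textbf{Construction of the domain.} $\mathcal C$ itself is closed, bounded and convex (since each $\mathcal A_i$ is convex and compact), but it is not compact. We therefore restrict the operator to a compact set. Let $K_i:=\overline{\{\Psi_{i,\mu_i^t}^t(f_{-i}): f_{-i}\in\mathcal C_{-i}\}}$. By Lemma~\ref{lem:rabl-schauder-best-response}(ii), this family is equicontinuous on the compact set $\Theta_i$, and it is uniformly bounded because its values lie in $\mathcal A_i$. Hence, by Arzel\`a--Ascoli, $K_i$ is compact in $\mathcal C_i$. Closure preserves equicontinuity and keeps values in $\mathcal A_i$.

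The main point is convexity, which we obtain by passing to the closed convex hull $D_i:=\overline{\mathrm{co}}\,K_i$. A convex combination of functions sharing a common modulus of continuity $\omega$ still has modulus $\omega$, and its values remain in the convex set $\mathcal A_i$. So $D_i$ is again equicontinuous, uniformly bounded and closed, and therefore compact by Arzel\`a--Ascoli. (Alternatively, one can invoke Mazur's theorem on the closed convex hull of a compact set in a Banach space.)

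\textbf{Applying Schauder.} Set $D:=\prod_i D_i\subset\mathcal C$, which is nonempty, convex and compact. For any $f\in D\subset\mathcal C$ we have $\Psi_{i,\mu_i^t}^t(f_{-i})\in K_i\subset D_i$, so $\Psi_{\mu^t}^t(D)\subset D$. Continuity of $\Psi_{\mu^t}^t$ on $D$ is inherited from Lemma~\ref{lem:rabl-schauder-best-response}(iii). Schauder's fixed point theorem in the Banach space of continuous functions with the sup norm then yields $f^{t*}\in D\subset\mathcal C$ with $\Psi_{\mu^t}^t(f^{t*})=f^{t*}$, which is the desired continuous BNE-RABL.

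The step requiring the most care is the compactness-plus-convexity of the invariant set, specifically checking that the uniform modulus in \eqref{eqn:proof-best-response-schauder} depends only on $\epsilon$ and not on $f_{-i}$, so that it transfers to convex hulls and limits. Lemma~\ref{lem:rabl-schauder-best-response}(ii) asserts exactly this uniformity, and we take it as given.
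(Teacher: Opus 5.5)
Your proposal is correct and follows the same route as the paper: Lemma~\ref{lem:rabl-schauder-best-response}(ii) plus Arzel\`a--Ascoli for compactness, Lemma~\ref{lem:rabl-schauder-best-response}(iii) for continuity, then Schauder. The only difference is that the paper applies Schauder in its compact-operator form on the closed bounded convex set $\mathcal C$ (noting that $\Psi_{\mu^t}^t(\mathcal C)$ is relatively compact), whereas you build the invariant compact convex set $\prod_i \overline{\mathrm{co}}\,K_i$ explicitly and use the compact-convex form; this is the standard reduction between the two versions, not a different proof.
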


\noindent
\textbf{Proof.}
The set $\mathcal C=\prod_{i\in N}\mathcal C_i$ is a nonempty, closed, bounded,
and convex subset of the Banach space of continuous functions endowed with the
supremum norm. 
Since $\Psi_{i,\mu^t}^t(f_{-i})$ takes values in the compact set $\mathcal A_i$, the family
$\{\Psi_{i,\mu^t}^t(f_{-i}):f_{-i}\in\mathcal C_{-i}\}$ is uniformly bounded. 
By the equicontinuity established in Lemma~\ref{lem:rabl-schauder-best-response}(ii) and Arzel\`a--Ascoli theorem \cite[Theorem~A5]{rudin1973functional},
$\Psi_{i,\mu^t}^t(\mathcal C_{-i})$ is relatively compact in $\mathcal C_i$, and hence
$\Psi_{\mu^t}^t(\mathcal C)$ is relatively compact in $\mathcal C$.
By Lemma~\ref{lem:rabl-schauder-best-response}(iii),
$\Psi_{\mu^t}^t:\mathcal C\to\mathcal C$ is continuous.
Therefore, $\Psi_{\mu^t}^t$ is a compact operator. Schauder's fixed-point theorem ensures the existence of
$f^{t*}\in\mathcal C$ such that
$
    \Psi_{\mu^t}^t(f^{t*})=f^{t*}.
$
By the definition of $\Psi_{\mu^t}^t$ and Lemma~\ref{lem:rabl-schauder-best-response}(i), $f^{t*}$ is a continuous BNE-RABL.
\hfill $\Box$

The proof of Theorem~\ref{thm:rabl-existence-schauder} uses Schauder's fixed-point theorem, which is commonly used in the continuous BNE literature; see, e.g., \cite{guo2021existence,meirowitz2003existence,tao2025generalized}. 
The main difference is that here we apply this approach to the nested risk-averse
objective in the BNE-RABL model.
In particular, the required continuity and strong convexity of the objective function are derived from conditions on the model primitives, including the loss function, the conditional type distributions, and the risk measures, rather than imposed directly on $v_{i,\mu_i^t}^t$.
Moreover, we do not require the uniform Lipschitz or H\"older continuity of the objective, as assumed in \cite{guo2021existence,tao2025generalized}. Instead, the continuity of the conditional distributions and the continuity of the aleatoric risk functional under the total-variation distance imply uniform continuity of $v_{i,\mu_i^t}^t$ in the player's own type, which in turn yields equicontinuity of the set of optimal response functions needed for the Arzel\`a--Ascoli theorem and the continuity of the optimal response function mapping. 

\subsection{Uniqueness of BNE-RABL under an additional dominance condition
}

\label{sec:bne-rabl-banach-theorem}

We now proceed to discuss the uniqueness of BNE-RABL.
We focus on
the case where the effects of a player's own action dominate the aggregate effects of the rivals' actions.
Rather than postulating this dominance condition directly, we formulate the following assumptions on each component of the objective function. 
These assumptions yield sufficient conditions for the desired dominance property, each of which can be verified separately in applications.

\begin{assumption}
\label{assump:rabl-contraction}
Let $i,j\in N$.
(a) The loss function
$c_i(a_i,a_{-i},\theta_i,\theta_{-i})$ is differentiable in $a_i$.
(b) For each $j\neq i$, there exist constants $L_{ij}^{c,0}\geq0$ and
$L_{ij}^{c,1}\geq0$ such that 
\begin{eqnarray*}
    \left|
c_i(a_i,a_{-i}',\theta_i,\theta_{-i}) - c_i(a_i,a_{-i},\theta_i,\theta_{-i})
\right|
\leq
\sum_{j\neq i}L_{ij}^{c,0}\|a_j' - a_j\|, \; \forall a_{-i},a_{-i}' \in \mathcal{A}_{-i}
\end{eqnarray*}
and
\begin{eqnarray*}
    \left\|
\nabla_{a_i}c_i(a_i,a_{-i}',\theta_i,\theta_{-i})
-
\nabla_{a_i}c_i(a_i,a_{-i},\theta_i,\theta_{-i})
\right\|
\leq
\sum_{j\neq i} L_{ij}^{c,1}\|a_j'- a_j\| ,\; \forall a_{-i},a_{-i}' \in \mathcal{A}_{-i}.
\end{eqnarray*}
(c) 
There exist constants
$\kappa_i^{{\rm al},0}\geq0$ and $\kappa_i^{{\rm al},1}\geq0$ such that
\begin{eqnarray*}
&&\left\| \nabla_{a_i} \rho_{\eta(\cdot\mid\theta_i;\zeta)}^{\rm al}
\left[ c_i\left( a_i, f_{-i}(\theta_{-i}), \theta_i, \theta_{-i}\right) \right]
-
\nabla_{a_i} \rho_{\eta(\cdot\mid\theta_i;\zeta)}^{\rm al}
\left[ c_i\left( a_i, f_{-i}'(\theta_{-i}), \theta_i, \theta_{-i} \right) \right] \right\|\nonumber\\
&&\quad\leq
\kappa_i^{{\rm al},1} \sup_{\theta_{-i}\in\Theta_{-i}}
\left\| \nabla_{a_i} c_i\left( a_i, f_{-i}(\theta_{-i}), \theta_i, \theta_{-i} \right)
-
\nabla_{a_i} c_i\left( a_i, f_{-i}'(\theta_{-i}), \theta_i, \theta_{-i}\right) \right\|\nonumber\\
&&\qquad+ \kappa_i^{{\rm al},0} \sup_{\theta_{-i}\in\Theta_{-i}} \left| c_i\left( a_i, f_{-i}(\theta_{-i}), \theta_i, \theta_{-i} \right)
-
c_i\left( a_i, f_{-i}'(\theta_{-i}), \theta_i, \theta_{-i} \right) \right|, \; \forall f_{-i},f_{-i}'\in\mathcal F_{-i}   \qquad 
\end{eqnarray*}
uniformly for $a_i\in\mathcal A_i$, $\theta_i\in\Theta_i$, and almost surely $\zeta\in\mathcal Z$.
(d)
There exist constants
$\kappa_i^{{\rm ep},0}\geq0$ and $\kappa_i^{{\rm ep},1}\geq0$ such that for any $f_{-i},f_{-i}'\in\mathcal F_{-i}$,
\begin{eqnarray}\label{eqn:ep-risk-gradient-stability-rabl}
&&\left\|
\nabla_{a_i} \rho_{\mu_i^t}^{\rm ep} \left( \rho_{\eta(\cdot\mid\theta_i;\zeta)}^{\rm al} \left[ c_i\left( a_i, f_{-i}(\theta_{-i}), \theta_i, \theta_{-i} \right) \right] \right)
-
\nabla_{a_i}\rho_{\mu_i^t}^{\rm ep}\left( \rho_{\eta(\cdot\mid\theta_i;\zeta)}^{\rm al} \left[ c_i\left( a_i, f_{-i}'(\theta_{-i}), \theta_i, \theta_{-i}\right)\right]\right)
\right\| \nonumber\\
&\leq&
\kappa_i^{{\rm ep},1}
\sup_{\zeta\in \mathcal{Z}}
\left\| \nabla_{a_i}\rho_{\eta(\cdot\mid\theta_i;\zeta)}^{\rm al}\left[ c_i\left( a_i, f_{-i}(\theta_{-i}), \theta_i, \theta_{-i} \right)\right]
-
\nabla_{a_i} \rho_{\eta(\cdot\mid\theta_i;\zeta)}^{\rm al} \left[c_i\left( a_i, f_{-i}'(\theta_{-i}), \theta_i, \theta_{-i}\right)\right]\right\| \nonumber\\
&&+
\kappa_i^{{\rm ep},0} \sup_{\zeta\in \mathcal{Z}} \left| \rho_{\eta(\cdot\mid\theta_i;\zeta)}^{\rm al} \left[ c_i\left( a_i, f_{-i}(\theta_{-i}), \theta_i, \theta_{-i} \right)\right]
-
\rho_{\eta(\cdot\mid\theta_i;\zeta)}^{\rm al} \left[ c_i\left( a_i, f_{-i}'(\theta_{-i}), \theta_i, \theta_{-i} \right)\right]
\right|,
\end{eqnarray}
uniformly for $a_i\in\mathcal A_i$ and $\theta_i\in\Theta_i$.
(e) Let
$K_{ij}
    :=
    \kappa_i^{{\rm ep},1}
    \left(
        \kappa_i^{{\rm al},1}L_{ij}^{c,1}
        +
        \kappa_i^{{\rm al},0}L_{ij}^{c,0}
    \right)
    +
    \kappa_i^{{\rm ep},0}L_{ij}^{c,0}$
    for $i\neq j$,
and $K_{ii}:=0.$
Let $\Gamma$ be the matrix with entries
$\Gamma_{ij}:=\frac{K_{ij}}{M_i}$.
The spectral radius of $\Gamma$, written $r(\Gamma)$, satisfies $r(\Gamma)<1.$
\end{assumption}

Condition~(b) requires Lipschitz continuity of player $i$'s loss function and marginal loss with respect to the rivals' actions. 
The constants $L_{ij}^{c,0}$ and $L_{ij}^{c,1}$ reflect the sensitivity of player $i$'s loss level and marginal loss with respect to player $j$'s action, respectively.
A similar blockwise Lipschitz condition is used in Assumption~3.2 in \cite{su2025existence} to control cross-player effects in the risk-neutral Bayesian game.
Conditions~(c) and (d) quantify how the risk measures for aleatoric and epistemic uncertainty amplify changes in the loss levels and in the marginal losses. 
Compared with the risk-neutral setting in \cite{su2025existence}, these constants are specific to the risk-averse nested objective. 
They are needed to translate original cross-effects in the loss function into cross-effects in the risk-adjusted marginal objective. 
In the risk-neutral case where both risk measures reduce to expectations, 
one may take
$\kappa_i^{{\rm al},1}=\kappa_i^{{\rm ep},1}=1,$ 
and 
$\kappa_i^{{\rm al},0} =\kappa_i^{{\rm ep},0}=0.$
We will derive the values of these constants for several commonly used risk measures in Section~\ref{subsec:price-competition-application}.

Condition~(e) is the analogue of the contraction condition in Theorem~4.1 of \cite{su2025existence} for the present risk-averse setting. 
A similar contraction condition is also used in \cite{lei2020synchronous} to
establish the convergence of inexact best-response schemes for stochastic Nash games.
A convenient sufficient condition for $r(\Gamma)<1$ is
\begin{equation}
\label{eqn:contraction-condition-sufficient}
    \max_{i\in N} \sum_{j\neq i} \frac{K_{ij}}{M_i} <1.
\end{equation}
Since $\Gamma$ is a nonnegative matrix, its spectral radius is bounded above by its maximum row sum.
Recall that the constant $M_i$ can be viewed as a lower bound on the sensitivity of player $i$'s marginal loss with respect to its own action; 
the constants $L_{ij}^{c,0}$ and $L_{ij}^{c,1}$ bound the sensitivity of player $i$'s loss level and marginal loss with respect to player $j$'s action;
and the constants
$\kappa_i^{{\rm al},0}$, $\kappa_i^{{\rm al},1}$,
$\kappa_i^{{\rm ep},0}$, and $\kappa_i^{{\rm ep},1}$ describe how the two risk
measures amplify perturbations in loss levels and marginal losses. 
Therefore, $\Gamma_{ij}$ represents the ratio between the risk-adjusted cross-player marginal effect from player $j$ to player $i$ and the marginal effect from player $i$'s own action,
and thus condition~\eqref{eqn:contraction-condition-sufficient} requires the aggregate risk-adjusted cross-player effect to be dominated by the player's own-action effect.
We will illustrate how these conditions can be verified in the price competition application in Section~\ref{subsec:price-competition-application}.
We refer readers to Section~4 in \cite{su2025existence} for a detailed discussion of the contraction condition in the risk-neutral case.

\begin{lemma}
\label{lem:rabl-induced-cross-gradient}
Suppose Assumption~\ref{assump:rabl-contraction} holds. 
Let  $i\in N$.
Then 
\begin{eqnarray}
    \label{eqn:induced-cross-gradient-rabl}
\left\| \nabla_{a_i} v_{i,\mu_i^t}^t(a_i,f_{-i},\theta_i) - \nabla_{a_i} v_{i,\mu_i^t}^t(a_i,f_{-i}',\theta_i) \right\| \leq \sum_{j\neq i}
K_{ij} \|f_j-f_j'\|_\infty, \qquad \forall f_{-i},f_{-i}'\in\mathcal C_{-i},
\end{eqnarray}
uniformly for $a_i\in\mathcal A_i$, $\theta_i\in\Theta_i$.
If, in addition, Assumption~\ref{assump:rabl-common-primitive}(a) holds, then 
\begin{eqnarray}
    \label{eqn:component-contraction-rabl}
    \left\| \Psi_{i,\mu^t}^t(f)(\theta_i)-\Psi_{i,\mu^t}^t(f')(\theta_i) \right\|
    \leq \sum_{j\neq i} \Gamma_{ij} \|f_j-f_j'\|_\infty .
\end{eqnarray}
\end{lemma}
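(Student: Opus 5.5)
The first inequality \eqref{eqn:induced-cross-gradient-rabl} follows by chaining Assumption~\ref{assump:rabl-contraction}(d), (c) and (b), from the outside in. Fix $a_i$, $\theta_i$ and $f_{-i},f_{-i}'\in\mathcal C_{-i}$. Since $v_{i,\mu_i^t}^t$ is exactly the nested composition in \eqref{eqn:ep-risk-gradient-stability-rabl}, condition~(d) bounds $\|\nabla_{a_i}v(a_i,f_{-i},\theta_i)-\nabla_{a_i}v(a_i,f_{-i}',\theta_i)\|$ by two terms:
\begin{itemize}
\item $\kappa_i^{{\rm ep},1}$ times the sup over $\zeta$ of the gradient discrepancy of the inner aleatoric risk;
\item $\kappa_i^{{\rm ep},0}$ times the sup over $\zeta$ of the level discrepancy of the inner aleatoric risk.
\end{itemize}
For the gradient term I apply condition~(c), which holds for almost every $\zeta$. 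The sup over $\zeta$ should be read as an essential sup w.r.t.\ $\mu_i^t$, which is harmless. This bounds it by $\kappa_i^{{\rm al},1}\sup_{\theta_{-i}}\|\nabla_{a_i}c_i(\cdot,f_{-i}(\theta_{-i}),\cdot)-\nabla_{a_i}c_i(\cdot,f'_{-i}(\theta_{-i}),\cdot)\|+\kappa_i^{{\rm al},0}\sup_{\theta_{-i}}|c_i(\cdot,f_{-i}(\theta_{-i}),\cdot)-c_i(\cdot,f'_{-i}(\theta_{-i}),\cdot)|$.

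For the level term I use only that a convex (monetary) risk measure is $1$-Lipschitz w.r.t.\ the sup norm, by monotonicity and cash invariance. This is the same fact already used in the proof of Lemma~\ref{lem:rabl-common-regularity}(i). It gives the bound $\sup_{\theta_{-i}}|c_i(\cdot,f_{-i}(\theta_{-i}),\cdot)-c_i(\cdot,f'_{-i}(\theta_{-i}),\cdot)|$.

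Next I apply Assumption~\ref{assump:rabl-contraction}(b) pointwise with $a_{-i}=f_{-i}(\theta_{-i})$ and $a'_{-i}=f'_{-i}(\theta_{-i})$. Bounding $\|f_j(\theta_j)-f_j'(\theta_j)\|\le\|f_j-f_j'\|_\infty$ gives sup-discrepancies of at most $\sum_{j\ne i}L^{c,1}_{ij}\|f_j-f_j'\|_\infty$ for the gradients and $\sum_{j\neq i}L^{c,0}_{ij}\|f_j-f_j'\|_\infty$ for the levels. Collecting coefficients gives exactly $K_{ij}=\kappa_i^{{\rm ep},1}(\kappa_i^{{\rm al},1}L^{c,1}_{ij}+\kappa_i^{{\rm al},0}L^{c,0}_{ij})+\kappa_i^{{\rm ep},0}L^{c,0}_{ij}$, which proves \eqref{eqn:induced-cross-gradient-rabl}.

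For \eqref{eqn:component-contraction-rabl}, I use the standard strong-monotonicity argument for variational inequalities. Let $a^*=\Psi_{i,\mu^t}^t(f)(\theta_i)$ and $a'^*=\Psi_{i,\mu^t}^t(f')(\theta_i)$. By Lemma~\ref{lem:rabl-common-regularity}(ii), $v(\cdot,f_{-i},\theta_i)$ is $M_i$-strongly convex and, given the differentiability asserted in (d), has an $M_i$-strongly monotone gradient. The first-order optimality conditions over the convex set $\mathcal A_i$ give
$\langle\nabla_{a_i}v(a^*,f_{-i},\theta_i),a'^*-a^*\rangle\ge0$ and $\langle\nabla_{a_i}v(a'^*,f'_{-i},\theta_i),a^*-a'^*\rangle\ge0$. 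Adding these two inequalities and inserting $\pm\nabla_{a_i}v(a'^*,f_{-i},\theta_i)$ yields
\[
M_i\|a^*-a'^*\|^2\le\langle\nabla_{a_i}v(a'^*,f_{-i},\theta_i)-\nabla_{a_i}v(a^*,f_{-i},\theta_i),a'^*-a^*\rangle\le\|\nabla_{a_i}v(a'^*,f'_{-i},\theta_i)-\nabla_{a_i}v(a'^*,f_{-i},\theta_i)\|\,\|a^*-a'^*\|.
\]
Dividing by $\|a^*-a'^*\|$ and applying \eqref{eqn:induced-cross-gradient-rabl} at $a_i=a'^*$ gives the bound $\sum_{j\ne i}\Gamma_{ij}\|f_j-f_j'\|_\infty$.

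The only delicate point is justifying the use of the gradient of $v$. Strong convexity of the differentiable function $v$ is what gives strong monotonicity of its gradient, and condition~(d) implicitly assumes that gradient exists. I would state this as implicit in Assumption~\ref{assump:rabl-contraction}(c)--(d) rather than derive it.
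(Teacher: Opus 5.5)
Your proposal is correct and follows the paper's proof essentially step for step. The first bound comes from chaining Assumption~\ref{assump:rabl-contraction}(d), (c), (b), using the sup-norm $1$-Lipschitz property of the inner risk measure for the level term. The second comes from adding the two first-order optimality conditions and invoking strong monotonicity of $\nabla_{a_i}v_{i,\mu_i^t}^t$. The only difference is a mirror-image choice: you use strong monotonicity of $\nabla_{a_i}v_{i,\mu_i^t}^t(\cdot,f_{-i},\theta_i)$ and evaluate the cross-gradient gap at $a'^*$, whereas the paper uses $\nabla_{a_i}v_{i,\mu_i^t}^t(\cdot,f_{-i}',\theta_i)$ and evaluates it at $a^*$. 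This is immaterial because \eqref{eqn:induced-cross-gradient-rabl} holds uniformly in $a_i$.
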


\noindent
\textbf{Proof.}
By Assumption~\ref{assump:rabl-contraction}(b-c), for any $\zeta \in \mathcal{Z}$, we immediately arrive at
\begin{eqnarray}
    \label{eqn:proof-inner-gradient-bound}
&&\left\|
\nabla_{a_i} \rho_{\eta(\cdot\mid\theta_i;\zeta)}^{\rm al} \left( c_i\left( a_i, f_{-i}(\theta_{-i}), \theta_i, \theta_{-i} \right) \right)
-
\nabla_{a_i} \rho_{\eta(\cdot\mid\theta_i;\zeta)}^{\rm al} \left( c_i\left( a_i, f_{-i}'(\theta_{-i}), \theta_i, \theta_{-i} \right) \right)
\right\|\nonumber\\
&&\qquad\leq
\sum_{j\neq i} \left( \kappa_i^{{\rm al},1}L_{ij}^{c,1} + \kappa_i^{{\rm al},0}L_{ij}^{c,0} \right) \|f_j-f_j'\|_\infty,
\end{eqnarray}
and 
\begin{eqnarray} \label{eqn:proof-inner-value-bound}
&&\left|
\rho_{\eta(\cdot\mid\theta_i;\zeta)}^{\rm al}
\left( c_i\left( a_i, f_{-i}(\theta_{-i}), \theta_i, \theta_{-i} \right)\right)
-
\rho_{\eta(\cdot\mid\theta_i;\zeta)}^{\rm al}
\left( c_i\left( a_i, f_{-i}'(\theta_{-i}), \theta_i, \theta_{-i}\right) \right) \right| \nonumber\\
&\leq& \sup_{\theta_{-i}\in \Theta_{-i}} \left| c_i\left( a_i, f_{-i}(\theta_{-i}), \theta_i, \theta_{-i} \right) -  c_i\left( a_i, f_{-i}'(\theta_{-i}), \theta_i, \theta_{-i}\right) \right| \nonumber\\
&\leq &
\sum_{j\neq i}
L_{ij}^{c,0}\|f_j-f_j'\|_\infty .    
\end{eqnarray}
Finally, by \eqref{eqn:ep-risk-gradient-stability-rabl} in Assumption~\ref{assump:rabl-contraction}(d), and using
\eqref{eqn:proof-inner-gradient-bound} and \eqref{eqn:proof-inner-value-bound}, we immediately
arrive at \eqref{eqn:induced-cross-gradient-rabl}.

For $\theta_i\in\Theta_i$, and $f_{-i},f_{-i}'\in\mathcal F_{-i}$, let
$a_i^*:=\Psi_{i,\mu^t}^t(f_{-i})(\theta_i),$ and $a_i^{\prime *}:=\Psi_{i,\mu^t}^t(f_{-i}')(\theta_i).$
If $a_i^*=a_i^{\prime *}$, then \eqref{eqn:component-contraction-rabl} is trivial.
Suppose now that $a_i^*\neq a_i^{\prime *}$.
Since $a_i^*$ minimizes $v_{i,\mu_i^t}^t(a_i,f_{-i},\theta_i)$
over  $\mathcal A_i$, the first-order optimality condition gives
\begin{eqnarray} \label{eqn:proof-expansive-1}
        \nabla_{a_i}v_{i,\mu_i^t}^t(a_i^*,f_{-i},\theta_i)^\top
    (a_i^{\prime *}-a_i^*)\geq0 .
\end{eqnarray}
Similarly, since $a_i^{\prime *}$ minimizes $v_{i,\mu_i^t}^t(a_i,f_{-i}',\theta_i)$
over $\mathcal A_i$, we have
\begin{eqnarray} \label{eqn:proof-expansive-2}
    \nabla_{a_i}v_{i,\mu_i^t}^t(a_i^{\prime *},f_{-i}',\theta_i)^\top
    (a_i^*-a_i^{\prime *})\geq0 .
\end{eqnarray}
Combining \eqref{eqn:proof-expansive-1} and \eqref{eqn:proof-expansive-2}, we have
\begin{eqnarray} \label{eqn:proof-expansive-3}
\left(
\nabla_{a_i}v_{i,\mu_i^t}^t(a_i^*,f_{-i},\theta_i)
-
\nabla_{a_i}v_{i,\mu_i^t}^t(a_i^{\prime *},f_{-i}',\theta_i)
\right)^\top
(a_i^*-a_i^{\prime *}) \leq 0.
\end{eqnarray}
By the strong convexity of
$v_{i,\mu_i^t}^t(a_i,f_{-i}',\theta_i)$ in $a_i$ under Assumption~\ref{assump:rabl-common-primitive}(a),
its gradient is strongly monotonic. 
Hence
\begin{eqnarray} \label{eqn:proof-expansive-4}
&&
\left(
\nabla_{a_i}v_{i,\mu_i^t}^t(a_i^*,f_{-i}',\theta_i)
-
\nabla_{a_i}v_{i,\mu_i^t}^t(a_i^{\prime *},f_{-i}',\theta_i)
\right)^\top
(a_i^*-a_i^{\prime *})  \geq
M_i\|a_i^*-a_i^{\prime *}\|^2 .
\end{eqnarray}
On the other hand,
\begin{eqnarray} \label{eqn:proof-expansive-5}
&&
\left(
\nabla_{a_i}v_{i,\mu_i^t}^t(a_i^*,f_{-i}',\theta_i)
-
\nabla_{a_i}v_{i,\mu_i^t}^t(a_i^{\prime *},f_{-i}',\theta_i)
\right)^\top
(a_i^*-a_i^{\prime *}) \nonumber\\
&=&
\left(
\nabla_{a_i}v_{i,\mu_i^t}^t(a_i^*,f_{-i}',\theta_i)
-
\nabla_{a_i}v_{i,\mu_i^t}^t(a_i^*,f_{-i},\theta_i)
\right)^\top
(a_i^*-a_i^{\prime *}) \nonumber\\
&&+
\left(
\nabla_{a_i}v_{i,\mu_i^t}^t(a_i^*,f_{-i},\theta_i)
-
\nabla_{a_i}v_{i,\mu_i^t}^t(a_i^{\prime *},f_{-i}',\theta_i)
\right)^\top
(a_i^*-a_i^{\prime *})\nonumber\\
&\leq & \left\|
\nabla_{a_i}v_{i,\mu_i^t}^t(a_i^*,f_{-i}',\theta_i)
-
\nabla_{a_i}v_{i,\mu_i^t}^t(a_i^*,f_{-i},\theta_i)
\right\|
\|a_i^*-a_i^{\prime *}\|,
\end{eqnarray}
where the last inequality is due to \eqref{eqn:proof-expansive-3}.
Combining \eqref{eqn:proof-expansive-4} and \eqref{eqn:proof-expansive-5},
we have 
\begin{eqnarray*}
    \|a_i^*-a_i^{\prime *}\|
    \leq
    \frac{1}{M_i}
    \left\|
    \nabla_{a_i}v_{i,\mu_i^t}^t(a_i^*,f_{-i}',\theta_i)
    -
    \nabla_{a_i}v_{i,\mu_i^t}^t(a_i^*,f_{-i},\theta_i)
    \right\|.
\end{eqnarray*}
Applying \eqref{eqn:induced-cross-gradient-rabl} yields
\[
    \left\|
        \Psi_{i,\mu^t}^t(f)(\theta_i)-\Psi_{i,\mu^t}^t(f')(\theta_i)
    \right\|
    =
    \|a_i^*-a_i^{\prime *}\|
    \leq
    \sum_{j\neq i}
    \frac{K_{ij}}{M_i}
    \|f_j-f_j'\|_\infty,
\]
which is
\eqref{eqn:component-contraction-rabl}.
\hfill $\Box$

Based on the contraction result in Lemma~\ref{lem:rabl-induced-cross-gradient}, 
we now establish the uniqueness of the BNE-RABL.

\begin{theorem}
\label{thm:rabl-contraction}
Under Assumptions~\ref{assump:rabl-common-primitive}(a) and \ref{assump:rabl-contraction},
the BNE-RABL model  \eqref{eqn:BNE-heter-belief}
has at most one equilibrium in round $t$.
\end{theorem}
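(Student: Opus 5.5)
We argue by contradiction, using the componentwise contraction estimate of Lemma~\ref{lem:rabl-induced-cross-gradient} together with the spectral radius condition in Assumption~\ref{assump:rabl-contraction}(e). Suppose $f^{t*}$ and $\hat f^{t*}$ are two BNE-RABL in round $t$. By Definition~\ref{def:BNE-HB}, each is a fixed point of the optimal response operator. Since Assumption~\ref{assump:rabl-common-primitive}(a) makes $v_{i,\mu_i^t}^t$ strongly convex in $a_i$ (Lemma~\ref{lem:rabl-common-regularity}(ii)), the minimizer is unique. Hence $f_i^{t*}(\theta_i)=\Psi_{i,\mu^t}^t(f^{t*})(\theta_i)$ and $\hat f_i^{t*}(\theta_i)=\Psi_{i,\mu^t}^t(\hat f^{t*})(\theta_i)$ for every $i$ and every $\theta_i$. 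Note that the lemma's gradient bound holds for $f_{-i},f'_{-i}$ in $\mathcal F_{-i}$: Assumption~\ref{assump:rabl-contraction}(c)--(d) is stated over $\mathcal F_{-i}$. So we need no continuity of the equilibria, and measurable equilibria are covered as well.

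\textbf{Step 1 (componentwise bound).} Apply \eqref{eqn:component-contraction-rabl} at each $\theta_i$ and take the supremum over $\Theta_i$. Writing $e_i:=\|f_i^{t*}-\hat f_i^{t*}\|_\infty$ (finite, since $\mathcal A_i$ is compact), this gives
\[
e_i\leq \sum_{j\neq i}\Gamma_{ij}e_j,\qquad i\in N,
\]
that is, $e\leq \Gamma e$ componentwise for the nonnegative vector $e=(e_1,\dots,e_n)^\top$.

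\textbf{Step 2 (spectral argument).} Since $\Gamma\geq0$ entrywise, iterating the inequality preserves it, so $e\leq \Gamma^k e$ for all $k$. Because $r(\Gamma)<1$, we have $\Gamma^k\to0$, hence $e\leq 0$ and therefore $e=0$. Thus $f^{t*}=\hat f^{t*}$.

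Alternatively, one may use the Perron--Frobenius theorem. It gives a positive weight vector $w$ with $\Gamma^\top w\leq r' w$ for some $r'<1$ (perturbing $\Gamma$ to a strictly positive matrix if needed). Then $w^\top e\leq w^\top\Gamma e\leq r' w^\top e$, which forces $w^\top e=0$.

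\textbf{Main obstacle.} The main care point is confirming that Lemma~\ref{lem:rabl-induced-cross-gradient} applies without extra hypotheses. In particular, the first-order optimality conditions in its proof require $v_{i,\mu_i^t}^t$ to be differentiable in $a_i$; this differentiability is implicit in Assumption~\ref{assump:rabl-contraction}(c)--(d), which presuppose the gradients of the risk-adjusted objectives. The iteration $e\le\Gamma^k e$ is routine once nonnegativity is noted.
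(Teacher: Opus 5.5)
Your proof is correct and follows the paper's argument essentially verbatim: the paper also writes the two equilibria as fixed points of $\Psi_{\mu^t}^t$, applies Lemma~\ref{lem:rabl-induced-cross-gradient} to obtain $0\le d\le \Gamma d\le \Gamma^k d$ componentwise, and concludes $d=0$ from $r(\Gamma)<1$. Your other remarks are valid but do not change the route. These are the observation that the lemma's estimate only uses Assumption~\ref{assump:rabl-contraction}(c)--(d) over $\mathcal F_{-i}$, so measurable equilibria are covered, and the optional Perron--Frobenius variant.
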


\noindent
\textbf{Proof.}
Let $f$ and $f'$ be two equilibria under the profile of posterior distributions $\mu^t$. Then
$f=\Psi_{\mu^t}^t(f)$ and $f'=\Psi_{\mu^t}^t(f')$.
Let $d=(d_1,\ldots,d_n)^\top\in\mathbb R_+^n$ and
$ d_i:=\|f_i-f_i'\|_\infty,$ for $i\in N.$
By Lemma~\ref{lem:rabl-induced-cross-gradient}, for every $i\in N$,
\begin{eqnarray*}
    d_i = \left\| \Psi_{i,\mu_i^t}^t(f) - \Psi_{i,\mu_i^t}^t(f') \right\|_\infty 
    \leq \sum_{j\ne i} \Gamma_{ij} \|f_j-f_j'\|_\infty = (\Gamma d)_i,
\end{eqnarray*}
and thus
\begin{eqnarray} \label{proof:uniqueness}
    0\le d\le\Gamma d \leq \Gamma^k d,\quad \forall \; k =1,2,3,\dots,
\end{eqnarray}
where the inequalities hold componentwise.
By Assumption~\ref{assump:rabl-contraction}(e), $r(\Gamma)<1$ implies $\lim_{k\to\infty}\Gamma^k=0$.
Combining this result with \eqref{proof:uniqueness}, we conclude that $d = 0$ and thereby $f = f'$.
\hfill$\Box$


\subsection{Existence and uniqueness of BNE-BPD}
\label{sec:exist_uni_BNE-BPD}

In this section, we study the existence and uniqueness of the BNE-BPD model. 
To avoid repetition, we state only the corresponding assumptions and results, 
and focus on establishing the continuity of the conditional BPD, 
which differs from the preceding analysis and serves as the key ingredient for the existence of the BNE-BPD.

\begin{assumption} \label{assumption:bpd-existence}
    Let $t$ be fixed. 
        (a) $p_i(\theta_i;\zeta)$ is continuous in $\theta_i$ for all $\zeta\in \mathcal{Z}$, 
        and $\underline{q}_i^t := \inf_{\theta_i\in \Theta_i} \int_{\mathcal{Z}} p_i(\theta_i;\zeta) m_i^t(\zeta)d\zeta>0$.
        (b) $\rho^{\rm bp}$ is a law-invariant, convex risk measure, and the induced risk functional $\varrho^{\rm bp}$ is continuous in the underlying probability distribution
        under the total variation metric.
    
\end{assumption}

Assumption~\ref{assumption:bpd-existence} 
corresponds to Assumption~\ref{assump:rabl-common-primitive}(c)-(d). 
The newly introduced condition~(a) is a pair of conditions on the marginal densities of the underlying distribution and the predictive distribution, 
which will be used to derive the continuity of the conditional predictive distribution in $\theta_i$ (see the lemma below) 
and thereby the continuity of the objective function in~$\theta$.

\begin{lemma} \label{lemma:bpd-continuity}
    Suppose that  Assumptions~\ref{basic-assumption}(a) and \ref{assumption:bpd-existence} hold.
    Then for $i\in N$, $\nu_i^t(\cdot|\theta_i)$ is continuous in $\theta_i$ under the total variation metric.
\end{lemma}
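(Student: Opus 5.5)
We plan to work directly with densities, since the total variation distance between two absolutely continuous distributions equals half the $L^1$ distance between their densities. Write $Q_i^t(\theta_i):=\int_{\mathcal Z} p_i(\theta_i;\zeta)m_i^t(\zeta)\,d\zeta$ for the marginal predictive density of $\theta_i$. By \eqref{eqn:conditional-bpd}, $q_i^t(\theta_{-i}\mid\theta_i)=q_i^t(\theta_i,\theta_{-i})/Q_i^t(\theta_i)$, and Assumption~\ref{assumption:bpd-existence}(a) gives $Q_i^t\ge \underline q_i^t>0$ on $\Theta_i$. For $\theta_i,\theta_i'\in\Theta_i$ we will use the standard splitting
\[
\bigl|q_i^t(\theta_{-i}\mid\theta_i)-q_i^t(\theta_{-i}\mid\theta_i')\bigr|
\le \frac{|q_i^t(\theta_i,\theta_{-i})-q_i^t(\theta_i',\theta_{-i})|}{Q_i^t(\theta_i)}
+ q_i^t(\theta_i',\theta_{-i})\,\frac{|Q_i^t(\theta_i)-Q_i^t(\theta_i')|}{Q_i^t(\theta_i)Q_i^t(\theta_i')}.
\]
Integrating over $\Theta_{-i}$ and using $\int_{\Theta_{-i}} q_i^t(\theta_i',\theta_{-i})\,d\theta_{-i}=Q_i^t(\theta_i')$, we get
\[
2\,\dd_{\rm TV}\bigl(\nu_i^t(\cdot\mid\theta_i),\nu_i^t(\cdot\mid\theta_i')\bigr)\le \frac{1}{\underline q_i^t}\Bigl(\bigl\|q_i^t(\theta_i,\cdot)-q_i^t(\theta_i',\cdot)\bigr\|_{L^1(\Theta_{-i})}+|Q_i^t(\theta_i)-Q_i^t(\theta_i')|\Bigr).
\]
The second term is bounded by the first, since $|Q_i^t(\theta_i)-Q_i^t(\theta_i')|\le\int_{\Theta_{-i}}|q_i^t(\theta_i,\theta_{-i})-q_i^t(\theta_i',\theta_{-i})|\,d\theta_{-i}$. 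So it suffices to show that $\theta_i\mapsto q_i^t(\theta_i,\cdot)$ is continuous into $L^1(\Theta_{-i})$.

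For that step we use Fubini/Tonelli:
\[
\int_{\Theta_{-i}}|q_i^t(\theta_i,\theta_{-i})-q_i^t(\theta_i',\theta_{-i})|\,d\theta_{-i}\le\int_{\mathcal Z}\Bigl(\int_{\Theta_{-i}}|p(\theta_i,\theta_{-i};\zeta)-p(\theta_i',\theta_{-i};\zeta)|\,d\theta_{-i}\Bigr)m_i^t(\zeta)\,d\zeta .
\]
Writing $p(\theta_i,\theta_{-i};\zeta)=p(\theta_{-i}\mid\theta_i;\zeta)\,p_i(\theta_i;\zeta)$, the inner integral is at most
\[
p_i(\theta_i;\zeta)\cdot 2\,\dd_{\rm TV}\bigl(\eta(\cdot\mid\theta_i;\zeta),\eta(\cdot\mid\theta_i';\zeta)\bigr)+|p_i(\theta_i;\zeta)-p_i(\theta_i';\zeta)|.
\]

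This is the step we expect to be the main obstacle. Assumption~\ref{assumption:bpd-existence} gives only pointwise continuity of $p_i(\cdot;\zeta)$, so we cannot conclude directly. Our first approach is dominated convergence in $\zeta$ along any sequence $\theta_i^k\to\theta_i$: the integrand tends to $0$ for each $\zeta$ by continuity of $p_i(\cdot;\zeta)$ together with TV-continuity of the conditional law. The latter is not listed in Assumption~\ref{assumption:bpd-existence}, so we would invoke it as in Assumption~\ref{assump:rabl-common-primitive}(b), or instead assume continuity of the joint density $p(\cdot,\theta_{-i};\zeta)$ and apply Scheffé's lemma. Scheffé's lemma in fact gives pointwise $L^1$ convergence of $p(\theta_i^k,\cdot;\zeta)$ to $p(\theta_i,\cdot;\zeta)$ once the marginals converge, i.e.\ $p_i(\theta_i^k;\zeta)\to p_i(\theta_i;\zeta)$. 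For domination we need a bound of the form $p_i(\cdot;\zeta)\le g(\zeta)$ with $\int g\,m_i^t<\infty$. A simpler alternative is to use $\int_{\Theta_{-i}}|p-p'|\le p_i(\theta_i;\zeta)+p_i(\theta_i';\zeta)$ and apply the generalized dominated convergence theorem: the dominating functions $p_i(\theta_i^k;\zeta)+p_i(\theta_i;\zeta)$ converge pointwise to $2p_i(\theta_i;\zeta)$. Their integrals against $m_i^t$, namely $Q_i^t(\theta_i^k)+Q_i^t(\theta_i)$, converge to $2Q_i^t(\theta_i)$ provided $Q_i^t$ is continuous. Continuity of $Q_i^t$, if not immediate, we would secure by bounding $p_i$ uniformly on compact $\Theta_i$.

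Once this holds, the bound above gives $\dd_{\rm TV}(\nu_i^t(\cdot\mid\theta_i^k),\nu_i^t(\cdot\mid\theta_i))\to 0$, which is the claimed continuity. Compactness of $\Theta_i$ from Assumption~\ref{basic-assumption}(a) then upgrades it to uniform continuity, which is the form later needed for equicontinuity, mirroring Lemma~\ref{lem:rabl-schauder-best-response}(ii).
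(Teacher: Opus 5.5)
Your argument is correct, given the supplementary hypotheses you flag, and it takes a genuinely different route from the paper. The paper writes $q_i^t(\cdot\mid\theta_i)=\int_{\mathcal Z}p(\cdot\mid\theta_i;\zeta)\,w_i^t(\theta_i;\zeta)\,m_i^t(\zeta)\,d\zeta$, where $w_i^t(\theta_i;\zeta)=p_i(\theta_i;\zeta)/\int_{\mathcal Z}p_i(\theta_i;\xi)m_i^t(\xi)\,d\xi$ is a likelihood reweighting. It then bounds the TV distance by $\sup_{\zeta}\dd_{\rm TV}(\eta(\cdot\mid\theta_i;\zeta),\eta(\cdot\mid\theta_i';\zeta))$ plus $\frac{2}{\underline q_i^t}\sup_{\zeta}|p_i(\theta_i;\zeta)-p_i(\theta_i';\zeta)|$. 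This gives an explicit modulus of continuity, and the same reweighting structure reappears in Lemma~\ref{lemma:6}(ii). You instead write the conditional as joint over marginal, reduce to $L^1(\Theta_{-i})$-continuity of the slice $q_i^t(\theta_i,\cdot)$, and pass to the limit inside the $\zeta$-integral. As a result you only need continuity for each fixed $\zeta$, and your Scheff\'e variant even replaces TV-continuity of the conditionals by pointwise continuity of the joint density.

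The obstacle you isolate is real, and the paper's proof passes over it in two places. It invokes Assumption~\ref{assump:rabl-common-primitive}(b), which is not among the lemma's hypotheses. Its final claim, that the $\sup_\zeta$ term is small ``under Assumption~\ref{assumption:bpd-existence}(a)'', needs continuity of $p_i(\cdot;\zeta)$ uniformly in $\zeta$, whereas (a) gives it only for each fixed $\zeta$. Your generalized-DCT step pinpoints exactly what is missing: given TV-continuity of the conditionals for each fixed $\zeta$, continuity of the predictive marginal $Q_i^t$ is all that is needed.

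Without such a hypothesis the statement can actually fail, since Fatou gives only lower semicontinuity of $Q_i^t$. Take $\Theta_i=[0,1]$, $\mu_i^t$ uniform on $(0,\tfrac12]$, and $p_i(\cdot;\zeta)=\tfrac12+{}$(a triangular bump of mass $\tfrac12$ and half-width $\zeta^3$, centred at $\zeta$). Then $Q_i^t(0)=\tfrac12$ but $Q_i^t(\theta_i)\to\tfrac32$ as $\theta_i\downarrow 0$. Now take conditionals $\eta(\cdot\mid\theta_i;\zeta)$ that do not depend on $\theta_i$, but whose limit as $\zeta\downarrow 0$ differs in TV from their $\mu_i^t$-average. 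This makes $\nu_i^t(\cdot\mid\theta_i)$ TV-discontinuous at $0$, even though Assumption~\ref{assump:rabl-common-primitive}(b) holds trivially.

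So in a write-up, state the extra hypotheses outright rather than leaving them as contingencies. You need TV-continuity of $\eta(\cdot\mid\theta_i;\zeta)$ in $\theta_i$ for each $\zeta$, or continuity of the joint density for the Scheff\'e route. You also need continuity of $Q_i^t$, for example via $\sup_{\theta_i,\zeta}p_i(\theta_i;\zeta)<\infty$; under that bound, ordinary dominated convergence already suffices and the generalized version is not needed.
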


\noindent
\textbf{Proof.}
Let 
\begin{eqnarray*}
    w_{i}^t(\theta_i;\zeta) := \frac{p_i(\theta_i;\zeta)}{\int_{\Theta_{-i}} q_i^t(\theta_i,\theta_{-i}) d\theta_{-i}}
    = \frac{p_i(\theta_i;\zeta)} {\int_{\Theta_{-i}}\int_{\mathcal{Z}}
        p(\theta_i,\theta_{-i};\zeta)m_i^t(\zeta)\,d\zeta\,d\theta_{-i}}.
\end{eqnarray*}
Then the conditional density function of the BPD defined in \eqref{eqn:conditional-bpd} can be written as
\begin{eqnarray} \label{eqn:conditional-q_i^t}
    q_i^t(\theta_{-i}\mid \theta_i)
    &:=&
    \frac{q_i^t(\theta_i,\theta_{-i})}
    {\int_{\Theta_{-i}}q_i^t(\theta_i,\theta_{-i})d\theta_{-i}}
    =
    \frac{
        \int_{\mathcal{Z}}p(\theta_i,\theta_{-i};\zeta)m_i^t(\zeta)\,d\zeta
    }{
        \int_{\Theta_{-i}}\int_{\mathcal{Z}}
        p(\theta_i,\theta_{-i};\zeta)m_i^t(\zeta)\,d\zeta\,d\theta_{-i}
    } \nonumber\\
    &=& \int_{\mathcal{Z}}p(\theta_{-i}|\theta_{i};\zeta) w_{i}^t(\theta_i;\zeta) m_i^t(\zeta)\,d\zeta.
\end{eqnarray}
Let $\theta_i',\theta_i\in \Theta_i$. For every measurable function $h:\Theta_{-i}\rightarrow\R$ with $\|h\|_\infty \leq 1$, 
we can use Fubini-Tonelli theorem and 
\eqref{eqn:conditional-q_i^t} 
to establish
\begin{eqnarray}
\label{eqn:lemma5.4-proof1}
    &&\left| \mathbb{E}_{\nu_i^t(\cdot|\theta_i)} [h(\theta_{-i})] - \mathbb{E}_{\nu_i^t(\cdot|\theta_i')} [h(\theta_{-i})] \right|\nonumber\\
    &=& \left| \int_{\Theta_{-i}} h(\theta_{-i}) \int_{\mathcal{Z}}p(\theta_{-i}|\theta_{i};\zeta) w_{i}^t(\theta_i;\zeta) m_i^t(\zeta)\,d\zeta d\theta_{-i} \right.
    \nonumber\\
    && \left. -\int_{\Theta_{-i}} h(\theta_{-i}) \int_{\mathcal{Z}}p(\theta_{-i}|\theta_{i}';\zeta) w_{i}^t(\theta_i';\zeta) m_i^t(\zeta)\,d\zeta d\theta_{-i} \right|\nonumber\\
    &=& 
    \left|
    \int_{\mathcal{Z}} \mathbb{E}_{\eta(\cdot|\theta_i;\zeta)} \left[h(\theta_{-i}) \right] w_{i}^t(\theta_i;\zeta)  m_i^t(\zeta)\,d\zeta 
    -
    \int_{\mathcal{Z}} \mathbb{E}_{\eta(\cdot|\theta_i';\zeta)} \left[h(\theta_{-i}) \right] w_{i}^t(\theta_i';\zeta)  m_i^t(\zeta)\,d\zeta 
    \right|\nonumber\\
    &\leq & 
    \int_{\mathcal{Z}} \left| \mathbb{E}_{\eta(\cdot|\theta_i;\zeta)} \left[h(\theta_{-i}) \right] - \mathbb{E}_{\eta(\cdot|\theta_i';\zeta)} \left[h(\theta_{-i}) \right] \right| w_{i}^t(\theta_i;\zeta)  m_i^t(\zeta)\,d\zeta \nonumber\\
    &&
    +
    \int_{\mathcal{Z}} |\mathbb{E}_{\eta(\cdot|\theta_i';\zeta)} \left[h(\theta_{-i}) \right]| \left| w_{i}^t(\theta_i;\zeta) - w_{i}^t(\theta_i';\zeta) \right|  m_i^t(\zeta)\,d\zeta \nonumber\\
    &\leq& \sup_{\zeta\in\mathcal{Z}} \dd_{TV}(\eta(\cdot|\theta_i;\zeta), \eta(\cdot|\theta_i';\zeta)) + \int_{\mathcal{Z}} \left| w_{i}^t(\theta_i;\zeta) - w_{i}^t(\theta_i';\zeta) \right|  m_i^t(\zeta)\,d\zeta,
\end{eqnarray}
where, in the last inequality, the first term comes from the fact that $\int_{\mathcal{Z}} w_{i}^t(\theta_i;\zeta)  m_i^t(\zeta)\,d\zeta = 1$ and the second term from $\|h\|_\infty \leq 1$.
In what follows, we estimate 
the two terms at the rhs of the inequality in  \eqref{eqn:lemma5.4-proof1}.
Assumption~\ref{assump:rabl-common-primitive}(b) 
ensures that $\eta(\cdot|\theta_i;\zeta)$ is continuous in $\theta_i$ and hence that the first term can be arbitrarily small when $\theta_i'$ is sufficiently close to $\theta_i$.
For the second term,
\begin{eqnarray}
\label{eqn:lemma5.4-proof2}
    &&\int_{\mathcal{Z}} \left| w_{i}^t(\theta_i;\zeta) - w_{i}^t(\theta_i';\zeta) \right|  m_i^t(\zeta)\,d\zeta\nonumber\\
    &=& \int_{\mathcal{Z}} \left| \frac{p_i(\theta_i;\zeta)}{\int_{\Theta_{-i}} q_i^t(\theta_i,\theta_{-i}) d\theta_{-i}} - \frac{p_i(\theta_i';\zeta)}{\int_{\Theta_{-i}} q_i^t(\theta_i',\theta_{-i}) d\theta_{-i}} \right|  m_i^t(\zeta)\,d\zeta\nonumber\\
    & \leq & \frac{1}{\int_{\Theta_{-i}} q_i^t(\theta_i,\theta_{-i}) d\theta_{-i}}\int_{\mathcal{Z}} \left| p_i(\theta_i;\zeta) - p_i(\theta_i';\zeta)\right| m_i^t(\zeta)\,d\zeta \nonumber\\
    &&+  \left| \frac{1}{\int_{\Theta_{-i}} q_i^t(\theta_i,\theta_{-i}) d\theta_{-i}}
    - \frac{1}{\int_{\Theta_{-i}}  q_i^t(\theta_i',\theta_{-i}) d\theta_{-i}}  \right|
    \int_{\mathcal{Z}} p_i(\theta_i';\zeta)  m_i^t(\zeta)\,d\zeta \nonumber\\
    &\leq&  \frac{1}{\int_{\Theta_{-i}} q_i^t(\theta_i,\theta_{-i}) d\theta_{-i}}\int_{\mathcal{Z}} \left| p_i(\theta_i;\zeta) - p_i(\theta_i';\zeta)\right| m_i^t(\zeta)\,d\zeta +  \left| 
    \frac
    {\int_{\Theta_{-i}}  q_i^t(\theta_i',\theta_{-i}) d\theta_{-i} 
    }
    {\int_{\Theta_{-i}} q_i^t(\theta_i,\theta_{-i}) d\theta_{-i}  
    } -1
    \right| 
    \nonumber\\
    &\leq& \frac{2}{\int_{\Theta_{-i}} q_i^t(\theta_i,\theta_{-i}) d\theta_{-i}}\int_{\mathcal{Z}} \left| p_i(\theta_i;\zeta) - p_i(\theta_i';\zeta)\right| m_i^t(\zeta)\,d\zeta\nonumber\\
    &\leq & \frac{2}{\underline{q}_i^t}\sup_{\mathcal{Z}} \left| p_i(\theta_i;\zeta) - p_i(\theta_i';\zeta)\right|,
\end{eqnarray}
where the second and third inequalities use the fact that $\int_{\Theta_{-i}}  q_i^t(\theta_i',\theta_{-i}) d\theta_{-i} =  \int_{\mathcal{Z}}  p_i(\theta_i';\zeta) m_i^t(\zeta)\,d\zeta$, and
the last inequality is due to $\int_{\mathcal{Z}}m_i^t(\zeta)\,d\zeta = 1$ and $\underline{q}_i^t = \inf_{\theta_i\in \Theta_i} \int_{\Theta_{-i}} q_i^t(\theta_i,\theta_{-i}) d\theta_{-i}$. 
Under Assumption~\ref{assumption:bpd-existence}(a), 
the rhs of \eqref{eqn:lemma5.4-proof2} can be arbitrarily small when $\|\theta_i'-\theta_i\|$ is sufficiently small.
Note that
\begin{eqnarray} \label{eqn:lemma5.4-proof3}
    \dd_{TV} \left(\nu_i^t(\cdot|\theta_i'), \nu_i^t(\cdot|\theta_i) \right) &=& \sup_{h\in\mathcal{M},\|h\|_\infty\leq 1}\frac{1}{2}\left|
    \mathbb{E}_{\nu_i^t(\cdot|\theta_i')} [h(\theta_{-i})] - \mathbb{E}_{\nu_i^t(\cdot|\theta_i)} [h(\theta_{-i})]
    \right| 
    .
\end{eqnarray}
Combining \eqref{eqn:lemma5.4-proof1}, \eqref{eqn:lemma5.4-proof2}, and \eqref{eqn:lemma5.4-proof3},
we obtain  the continuity of $\nu_i^t(\cdot|\theta_i)$ in $\theta_i$.
\hfill $\Box$

With Assumption~\ref{assumption:bpd-existence} and Lemma~\ref{lemma:bpd-continuity} in place, 
we have obtained a counterpart to Assumption~\ref{assump:rabl-common-primitive}(b).
The existence of the BNE-BPD can therefore be established by following the same line of Section~\ref{sec:exist-bne-rabl}.
To avoid repetition, we present only the theorem and omit the proof.

\begin{theorem}
\label{thm:bpd-existence}
Suppose that Assumptions~\ref{assump:rabl-common-primitive}(a)-(b) and \ref{assumption:bpd-existence} hold. Then, for the fixed round $t$, the BNE-BPD model \eqref{eqn:BNE-BPD}
has a continuous equilibrium.
\end{theorem}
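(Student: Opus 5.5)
The plan is to mirror the Schauder argument of Section~\ref{sec:exist-bne-rabl}, replacing the nested objective by the single-level objective $v_{i,\nu_i^t(\cdot|\theta_i)}^{\text{bp},t}(a_i,f_{-i},\theta_i)=\rho^{\rm bp}_{\nu_i^t(\cdot|\theta_i)}\bigl(c_i(a_i,f_{-i}(\theta_{-i}),\theta_i,\theta_{-i})\bigr)$. The role played by Assumption~\ref{assump:rabl-common-primitive}(b) for $\eta(\cdot|\theta_i;\zeta)$ is now played by Lemma~\ref{lemma:bpd-continuity} for $\nu_i^t(\cdot|\theta_i)$. That lemma itself uses Assumption~\ref{assump:rabl-common-primitive}(b) together with Assumption~\ref{assumption:bpd-existence}(a), and both are hypotheses of the theorem.

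\textbf{Step 1: regularity of the objective (analogue of Lemma~\ref{lem:rabl-common-regularity}).} Convexity and cash invariance of $\rho^{\rm bp}$ (Assumption~\ref{assumption:bpd-existence}(b)) together with Assumption~\ref{assump:rabl-common-primitive}(a) give $M_i$-strong convexity of $v^{\text{bp},t}_i$ in $a_i$. The argument is the inner-layer computation in Lemma~\ref{lem:rabl-common-regularity}(ii), with no outer layer needed. For continuity in $(a_i,f_{-i},\theta_i)$, I split the difference into two pieces:
\begin{itemize}
\item a term with a common distribution $\nu_i^t(\cdot|\theta_i)$. This is bounded by $\sup_{\theta_{-i}}|c_i(a_i,f_{-i}(\theta_{-i}),\theta_i,\theta_{-i})-c_i(a_i',f'_{-i}(\theta_{-i}),\theta_i',\theta_{-i})|$, using the $1$-Lipschitz property of monetary risk measures in the sup norm. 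Uniform continuity of $c_i$ on the compact domain makes this term small.
\item a term with a common random loss $T(\theta_{-i})=c_i(a_i',f'_{-i}(\theta_{-i}),\theta'_i,\theta_{-i})$ under $\nu_i^t(\cdot|\theta_i)$ versus $\nu_i^t(\cdot|\theta_i')$. By law invariance this equals $|\varrho^{\rm bp}(\nu_i^t(\cdot|\theta_i)\circ T^{-1})-\varrho^{\rm bp}(\nu_i^t(\cdot|\theta'_i)\circ T^{-1})|$. The TV nonexpansiveness under pushforward \eqref{eqn:nonexpansive} makes this term small once Lemma~\ref{lemma:bpd-continuity} and TV-continuity of $\varrho^{\rm bp}$ are applied.
\end{itemize}
Only measurability of $T$ is needed here, so $f_{-i}\in\mathcal F_{-i}$ is allowed.

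\textbf{Step 2: best-response operator (analogue of Lemma~\ref{lem:rabl-schauder-best-response}).} Strong convexity makes the best response single-valued, and Berge's theorem gives its continuity in $(f_{-i},\theta_i)$. For equicontinuity in $\theta_i$, I first need the estimate $\sup_{a_i}|v^{\text{bp},t}_i(a_i,f_{-i},\theta_i)-v^{\text{bp},t}_i(a_i,f_{-i},\theta_i')|<\epsilon$ for $\|\theta_i-\theta_i'\|<\delta$, with $\delta$ independent of $f_{-i}$. Then strong convexity yields $\|a_i^*-a_i'^*\|\le 2\sqrt{\epsilon/M_i}$, exactly as in \eqref{eqn:proof-best-response-schauder}. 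The $\delta$-net argument of Lemma~\ref{lem:rabl-schauder-best-response}(iii) then gives continuity of $\Psi^{\text{bp},t}:\mathcal C\to\mathcal C$ in the sup norm.

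\textbf{Step 3: fixed point.} Arzel\`a--Ascoli makes $\Psi^{\text{bp},t}(\mathcal C)$ relatively compact, and Schauder's theorem on the closed convex bounded set $\mathcal C$ yields a fixed point, which is a continuous BNE-BPD.

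The main obstacle is the uniformity in $f_{-i}$ needed in Step~2. The first piece of the Step~1 split is uniform because $f_{-i}$ is the same on both sides, so only the $\theta_i$-dependence of $c_i$ enters. The second piece requires TV-continuity of $\varrho^{\rm bp}$ to be uniform over all laws, which is how Assumption~\ref{assumption:bpd-existence}(b) is phrased (a single $\delta$ for all $P,Q$), combined with a modulus from Lemma~\ref{lemma:bpd-continuity} that depends only on $\theta_i$. Since $\Theta_i$ is compact, I upgrade that pointwise continuity to uniform continuity before concluding.
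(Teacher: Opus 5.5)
Your proposal is correct and follows the route the paper indicates (its proof is omitted there): Lemma~\ref{lemma:bpd-continuity} replaces Assumption~\ref{assump:rabl-common-primitive}(b), and the argument of Lemmas~\ref{lem:rabl-common-regularity}--\ref{lem:rabl-schauder-best-response} and Theorem~\ref{thm:rabl-existence-schauder} is then repeated for the single-level objective. Your explicit check that the modulus in the equicontinuity estimate does not depend on $f_{-i}$ (via the $T$-independent pushforward bound \eqref{eqn:nonexpansive} and compactness of $\Theta_i$) supplies a detail the paper skips.
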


Moreover, as in Section~\ref{sec:bne-rabl-banach-theorem} where the effect of a player's own action dominates the aggregate effect of the rivals' actions, 
we can 
establish the uniqueness of the equilibrium under the following assumption.

\begin{assumption} \label{assumption:bpd-uniqueness}
    Let $i\in N$. 
        (a) For $\rho^{\rm bp}$, there exist constants
$\kappa_i^{{\rm bp},0}\geq0$ and $\kappa_i^{{\rm bp},1}\geq0$ such that for any $f_{-i},f_{-i}'\in\mathcal F_{-i}$,
\begin{eqnarray*}
&&\left\| \nabla_{a_i} \rho_{\nu_i^t(\cdot|\theta_i)}^{\rm bp}
\left( c_i\left( a_i, f_{-i}(\theta_{-i}), \theta_i, \theta_{-i}\right) \right)
-
\nabla_{a_i} \rho_{\nu_i^t(\cdot|\theta_i)}^{\rm bp}
\left( c_i\left( a_i, f_{-i}'(\theta_{-i}), \theta_i, \theta_{-i} \right) \right) \right\|\nonumber\\
&&\quad\leq
\kappa_i^{{\rm bp},1} \sup_{\theta_{-i}\in\Theta_{-i}}
\left\| \nabla_{a_i} c_i\left( a_i, f_{-i}(\theta_{-i}), \theta_i, \theta_{-i} \right)
-
\nabla_{a_i} c_i\left( a_i, f_{-i}'(\theta_{-i}), \theta_i, \theta_{-i}\right) \right\|\nonumber\\
&&\qquad+ \kappa_i^{{\rm bp},0} \sup_{\theta_{-i}\in\Theta_{-i}} \left| c_i\left( a_i, f_{-i}(\theta_{-i}), \theta_i, \theta_{-i} \right)
-
c_i\left( a_i, f_{-i}'(\theta_{-i}), \theta_i, \theta_{-i} \right) \right|,    
\end{eqnarray*}
uniformly for $a_i\in\mathcal A_i$ and $\theta_i\in\Theta_i$.
        (b) Let
$K_{ij}^{\rm bp}
    :=
        \kappa_i^{{\rm bp},1}L_{ij}^{c,1}
        +
        \kappa_i^{{\rm bp},0}L_{ij}^{c,0}$
    for $i\neq j$,
and $K_{ii}^{\rm bp}:=0.$
Let $\Gamma^{\rm bp}$ be the matrix with entries
$\Gamma_{ij}^{\rm bp}:=\frac{K_{ij}^{\rm bp}}{M_i}$.
The spectral radius of $\Gamma^{\rm bp}$ satisfies $r(\Gamma^{\rm bp})<1.$
\end{assumption}

Assumption~\ref{assumption:bpd-uniqueness} retains the conditions on the loss function in Assumption~\ref{assump:rabl-contraction}.
Since the objective function in BNE-BPD involves only a single risk measure $\rho^{\rm bp}$, which is applied directly to the loss function, we adapt the condition on $\rho^{\rm al}$ to $\rho^{\rm bp}$ and modify the definition of $K^{\rm bp}$ accordingly.
We are now ready to state the uniqueness of BNE-BPD.

\begin{theorem}
\label{thm:bpd-contraction}
Suppose Assumptions~\ref{assump:rabl-common-primitive}(a), \ref{assump:rabl-contraction}(a)-(b) and \ref{assumption:bpd-uniqueness}
hold. 
Then, for a fixed round $t$, the BNE-BPD model in Definition \ref{def:BNE-BPD} has at most one equilibrium.
\end{theorem}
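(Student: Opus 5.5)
The proof mirrors that of Theorem~\ref{thm:rabl-contraction}: we show that the best-response operator of the BNE-BPD model contracts componentwise with the matrix $\Gamma^{\rm bp}$, and then use $r(\Gamma^{\rm bp})<1$. Write
\[
v^{{\rm bp},t}_{i,\nu_i^t(\cdot|\theta_i)}(a_i,f_{-i},\theta_i):=\rho^{\rm bp}_{\nu_i^t(\cdot|\theta_i)}\bigl(c_i(a_i,f_{-i}(\theta_{-i}),\theta_i,\theta_{-i})\bigr),
\]
and let $\Psi^{{\rm bp},t}_i(f_{-i})(\theta_i)$ be its minimizer over $\mathcal A_i$. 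By Assumption~\ref{assump:rabl-common-primitive}(a), together with the monotonicity, cash invariance and convexity of $\rho^{\rm bp}$, the function $v^{{\rm bp},t}_{i,\nu_i^t(\cdot|\theta_i)}$ is $M_i$-strongly convex in $a_i$. This is exactly the single-layer version of the argument in Lemma~\ref{lem:rabl-common-regularity}(ii), so the minimizer is unique and $\Psi^{{\rm bp},t}_i$ is well defined. Only strong convexity is needed here, since uniqueness does not require continuity in $\theta_i$.

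\textbf{Step 1: cross-gradient bound.} For all $f_{-i},f'_{-i}$ and uniformly in $a_i$ and $\theta_i$, we show
\[
\bigl\|\nabla_{a_i}v^{{\rm bp},t}_{i,\nu_i^t(\cdot|\theta_i)}(a_i,f_{-i},\theta_i)-\nabla_{a_i}v^{{\rm bp},t}_{i,\nu_i^t(\cdot|\theta_i)}(a_i,f'_{-i},\theta_i)\bigr\|\le \sum_{j\ne i}K^{\rm bp}_{ij}\|f_j-f'_j\|_\infty.
\]
This follows by inserting the two Lipschitz bounds of Assumption~\ref{assump:rabl-contraction}(b) into Assumption~\ref{assumption:bpd-uniqueness}(a). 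The first bound controls $\sup_{\theta_{-i}}$ of the gradient difference by $\sum_{j\neq i}L^{c,1}_{ij}\|f_j-f_j'\|_\infty$, using $\|f_j(\theta_{-i})-f'_j(\theta_{-i})\|\le\|f_j-f'_j\|_\infty$ pointwise. The second controls the corresponding loss difference by $\sum_{j\neq i}L^{c,0}_{ij}\|f_j-f_j'\|_\infty$. Unlike Lemma~\ref{lem:rabl-induced-cross-gradient}, there is no outer epistemic layer, so the bound is obtained in a single substitution.

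\textbf{Step 2: Lipschitz response.} Fix $\theta_i$ and set $a_i^*=\Psi^{{\rm bp},t}_i(f_{-i})(\theta_i)$ and $a_i'^*=\Psi^{{\rm bp},t}_i(f'_{-i})(\theta_i)$. The two first-order variational inequalities give \eqref{eqn:proof-expansive-3}-type monotonicity. Combining this with strong monotonicity of the gradient (modulus $M_i$) exactly as in \eqref{eqn:proof-expansive-4}--\eqref{eqn:proof-expansive-5} yields
\[
\|a_i^*-a_i'^*\|\le M_i^{-1}\sum_{j\ne i}K^{\rm bp}_{ij}\|f_j-f_j'\|_\infty.
\]
Taking the supremum over $\theta_i$ gives $d\le\Gamma^{\rm bp}d$ componentwise, where $d_i:=\|f_i-f_i'\|_\infty$, for any two equilibria $f,f'$, since each satisfies $f=\Psi^{{\rm bp},t}(f)$.

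\textbf{Step 3: conclusion.} Since $\Gamma^{\rm bp}\ge0$, iterating gives $0\le d\le(\Gamma^{\rm bp})^kd$ for every $k$. Because $r(\Gamma^{\rm bp})<1$, we have $(\Gamma^{\rm bp})^k\to0$, hence $d=0$ and $f=f'$.

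\textbf{Main obstacle.} The only delicate point is Step 2, which needs differentiability of $v^{{\rm bp},t}$ in $a_i$ so that the first-order conditions make sense. Assumption~\ref{assumption:bpd-uniqueness}(a) implicitly presumes the gradient exists. We take this, together with Assumption~\ref{assump:rabl-contraction}(a), as the standing justification. If only subgradients were available, the same argument would go through using the strong monotonicity of $\partial_{a_i}v$ and selecting elements of the subdifferentials in the optimality conditions.
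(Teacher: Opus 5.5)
Your proof is correct and follows the argument the paper intends: the paper omits the proof of this theorem, and your Steps 1--3 are exactly the single-layer version of Lemma~\ref{lem:rabl-induced-cross-gradient} and Theorem~\ref{thm:rabl-contraction}. One caveat: the strong-convexity step needs $\rho^{\rm bp}$ to be convex and $\nu_i^t(\cdot|\theta_i)$ to be well defined, and these come from Assumption~\ref{assumption:bpd-existence}, not from the hypotheses listed in the theorem; the paper's RABL argument relies tacitly on Assumption~\ref{assump:rabl-common-primitive}(c) in the same way.
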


\section{Non-asymptotic convergence of BNE}
\label{sec:asymptotic-converge}

In this section, 
we study the non-asymptotic convergence of the equilibria generated 
by 
 the Bayesian learning process in repeated Bayesian games. 
This result relies on the convergence of the posterior distribution in Bayesian learning.
For asymptotic convergence,  \cite{gelman1995bayesian} provide the result that both the posterior mean and variance typically 
converge at a rate of $O(\frac{1}{t})$.
More recently, 
\cite{ma2024bayesian} and \cite{milz2026stochastic} prove  asymptotic convergence of the optimal values and optimal policies for Markov decision processes and stochastic optimal control within a Bayesian learning framework.
\cite{mou2024diffusion} establish the non-asymptotic convergence result that the posterior contraction rate around the true parameter is $t^{-1/2}$. Here, we use 
an intermediate result in the proof of Theorem~3.1 in \cite{mou2024diffusion} as an assumption to 
derive 
non-asymptotic convergence of BNE. %

\begin{assumption}%
\label{assumption:conv_rate_post_dist}
    For each player $i\in N$ and any $\epsilon\in (0,1)$,
    there exist positive constants $c_1>0$, $c_2(\epsilon)>0$, and $T(\epsilon)\geq 2$ such that, for any $t>T(\epsilon)$,
    \begin{eqnarray}
        \label{eq:Assump-6.1}
        \inmat{Prob}_{\eta(\cdot;\zeta^*)}^{t-1}\left( \mathbb{E}_{\mu_i^t}\|\zeta - \zeta^*\|_2^2 \leq c_2(\epsilon) (t-1)^{-c_1} \right) \geq 1- \epsilon,
    \end{eqnarray}
    where the probability measure $\inmat{Prob}_{\eta(\cdot;\zeta^*)}^{ t-1}$ is understood as the $(t-1)$-product probability measure of $\eta(\cdot;\zeta^*)$ over the measurable space $\Theta\times \cdots \times \Theta $ with its product Borel $\sigma$-algebra.
\end{assumption}

\subsection{Non-asymptotic convergence of BNE-RABL}
\label{sec:convergence-bne-rabl}

We first introduce the notion of gradient regularity of the risk measures. 

\begin{definition} \label{def:gradient-regular}
    A risk measure is said to be \emph{gradient regular} 
    on a class of bounded differentiable losses if, 
    for any probability measure $P$ and any loss function $l:\mathcal{A}\times \R \rightarrow \R$ differentiable in $a$, 
    there exists a measurable function $\omega_{P}:\R\rightarrow \R$ such that
    \begin{eqnarray} \label{eqn:rm-gradient-representation}
        \nabla_a \rho_P(l(a,\xi)) = \mathbb{E}_P [\omega_{P}(\xi) \nabla_a l(a,\xi)],
    \end{eqnarray}
    where $\mathbb{E}_P[\omega_{P}(\xi)] = 1$ and $\omega_{P}(\xi)\in [0,\bar{\omega}]$ for some $\bar{\omega}> 0$. 
\end{definition}

The regularity conditions required in Definition~\ref{def:gradient-regular} 
are satisfied by many widely used risk measures and have been studied in the risk optimization literature. For instance, Corollary~3.3 in \cite{ruszczynski2006optimization} shows that, if $l$ is convex, continuous, and differentiable for $P$-almost every $\xi$ at $\bar{a}\in \mathcal{A}$, and $\rho$ is a convex risk measure whose subdifferential $\partial_l \rho(\bar{l})=\{\bar{P}\}$ is a singleton, where $\bar{l}:=l(\bar{a},\cdot)$, then
\begin{eqnarray*}
    \nabla_a \rho_P(l(\bar{a},\xi)) = \mathbb{E}_{\bar{P}} \left[\nabla_a l(\bar{a},\xi)\right].
\end{eqnarray*}
Moreover, if $\bar{P}\ll P$ and $\sup_{\xi}\frac{d\bar{P}(\xi)}{dP(\xi)}< +\infty$, then setting $\omega_P(\xi):= \frac{d\bar{P}(\xi)}{dP(\xi)}$ yields
\begin{eqnarray*}
    \nabla_a \rho_P(l(\bar{a},\xi)) 
    = \int \nabla_a l(\bar{a},\xi)\, d\bar{P}(\xi) 
    = \int \frac{d\bar{P}(\xi)}{dP(\xi)} \nabla_a l(\bar{a},\xi)\, dP(\xi)
    = \mathbb{E}_{P}\left[ \omega_P(\xi) \nabla_a l(\bar{a},\xi) \right]. 
\end{eqnarray*}
Since $\mathbb{E}_P[\omega_{P}(\xi)] = \int \frac{d\bar{P}(\xi)}{dP(\xi)}\, dP(\xi) = 1$ and $\omega_{P}(\xi)\in \left[0,\sup_{\xi}\frac{d\bar{P}(\xi)}{dP(\xi)}\right]$, the risk measure $\rho$ is gradient regular in the sense of Definition~\ref{def:gradient-regular}.

To establish the convergence of the equilibrium as $t\rightarrow \infty$, 
we make the following assumptions.

\begin{assumption} \label{assumption:gradient-regular}
    For each player $i\in N$, 
    assume that $\rho^{\rm al}$ is a gradient regular risk measure with 
    $\omega_{i,\eta(\cdot\mid\theta_i;\zeta)}^{\rm al} (\theta_{-i})\in \left[0,  \bar{\omega}_{i}^{\rm al}\right]$ for all $\theta_{-i}$, 
    and $\rho^{\rm ep}$ is a gradient regular risk measure with 
    $\omega_{i,\mu_i^t}^{\rm ep}(\zeta)\in \left[0, \bar{\omega}_{i}^{\rm ep}\right]$ for all $\zeta$. 
    Moreover, there exists a constant $\lambda_i^{\rm al}$ such that 
    \begin{eqnarray} \label{eqn:gradient-regular-tvnorm}
        \mathbb{E}_{\eta(\cdot\mid\theta_i;\zeta^*)} \left[ \left|\omega_{i,\eta(\cdot\mid\theta_i;\zeta)}^{\rm al}(\theta_{-i}) - \omega_{i,\eta(\cdot\mid\theta_i;\zeta^*)}^{\rm al} (\theta_{-i}) \right| \right] 
        \leq 
        \lambda_i^{\rm al}\, \dd_{TV} \left( \eta(\cdot\mid\theta_i;\zeta), \eta(\cdot\mid\theta_i;\zeta^*) \right).
    \end{eqnarray}
\end{assumption}

 The following lemma shows that the expectation, the entropic risk measure, and the CVaR are all gradient regular, and specifies the corresponding parameters.
As the arguments are straightforward, we omit the proof.

\begin{lemma}
    \begin{itemize}
        \item[(i)] The expectation is gradient regular with $\omega_{P}(\xi) = 1$ and $\bar{\omega} = 1$. 
        Moreover, \eqref{eqn:gradient-regular-tvnorm} holds with $\lambda_i^{\rm al} = 0$.

        \item[(ii)] If $|l(a,\xi)|\leq \bar{l}$ for some $\bar{l}>0$, 
        then the entropic risk measure is gradient regular with $\omega_{P}(\xi) = \frac{e^{\gamma l(a,\xi)}}{\mathbb{E}_P\left[ e^{\gamma l(a,\xi)} \right]}$ and $\bar{\omega} = e^{2\gamma \bar{l}}$. 
        Moreover, \eqref{eqn:gradient-regular-tvnorm} holds with $\lambda_i^{\rm al} = 2e^{2\gamma \bar{l}}$.

        \item[(iii)] Suppose the underlying distribution $P$ admits a continuous density, 
        and let $q_{\beta,P,l}$ denote the $\beta$-quantile of the loss $l$ under $P$. 
        Then the CVaR is gradient regular with $\omega_{P}(\xi) = \frac{1}{1-\beta}\mathds{1}_{\{l(a,\xi)> q_{\beta,P,l}\}}$ and $\bar{\omega} = \frac{1}{1-\beta}$. 
        Moreover, if the density of $P$ around $q_{\beta,\eta(\cdot\mid\theta_i;\zeta),l}$ and $q_{\beta,\eta(\cdot\mid\theta_i;\zeta^*),l}$ is bounded above and below by 
        $\bar{p}_{\beta}$ and $\underline{p}_{\beta}$, respectively, 
        then \eqref{eqn:gradient-regular-tvnorm} holds with $\lambda_i^{\rm al} = \frac{\bar{p}_{\beta}}{(1-\beta)\, \underline{p}_{\beta}}$. 
    \end{itemize}
\end{lemma}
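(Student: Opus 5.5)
The plan is to verify each item directly from the explicit formula of the risk measure. In each case we differentiate under the integral (legitimate because $l$ is bounded and differentiable in $a$ and the action set is compact), read off the weight $\omega_P$, and then bound $\mathbb{E}_{\eta(\cdot\mid\theta_i;\zeta^*)}|\omega_{\zeta}-\omega_{\zeta^*}|$ through the variational definition \eqref{eqn:tv-definition-2} of $\dd_{\rm TV}$. Throughout, $l(a,\theta_{-i})=c_i(a,f_{-i}(\theta_{-i}),\theta_i,\theta_{-i})$ is regarded as a fixed bounded measurable function of $\theta_{-i}$. Both conditional laws $\eta(\cdot\mid\theta_i;\zeta)$ and $\eta(\cdot\mid\theta_i;\zeta^*)$ are applied to this same function, so only the measure changes.

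Part (i) is immediate. We have $\nabla_a\mathbb{E}_P[l]=\mathbb{E}_P[\nabla_a l]$, so $\omega_P\equiv1$ and $\bar\omega=1$. The weight does not depend on $P$, hence the left side of \eqref{eqn:gradient-regular-tvnorm} vanishes and $\lambda_i^{\rm al}=0$.

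For part (ii), write $g:=e^{\gamma l}$ and $Z_P:=\mathbb{E}_P[g]$. Then $\nabla_a\frac1\gamma\ln Z_P=\mathbb{E}_P[g\nabla_a l]/Z_P$, which gives $\omega_P=g/Z_P$ with $\mathbb{E}_P[\omega_P]=1$. Since $e^{-\gamma\bar l}\le g\le e^{\gamma\bar l}$, we get $Z_P\ge e^{-\gamma\bar l}$ and hence $\omega_P\le e^{2\gamma\bar l}$. For the stability bound, set $Z_\zeta:=Z_{\eta(\cdot\mid\theta_i;\zeta)}$ and $Z_*:=Z_{\eta(\cdot\mid\theta_i;\zeta^*)}$. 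Then
\[
\mathbb{E}_{*}\big|g/Z_\zeta-g/Z_*\big|=Z_*\,\big|1/Z_\zeta-1/Z_*\big|=|Z_*-Z_\zeta|/Z_\zeta .
\]
Testing \eqref{eqn:tv-definition-2} with $h=g/e^{\gamma\bar l}$ gives $|Z_*-Z_\zeta|\le2e^{\gamma\bar l}\dd_{\rm TV}$. Dividing by $Z_\zeta\ge e^{-\gamma\bar l}$ yields $\lambda_i^{\rm al}=2e^{2\gamma\bar l}$.

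For part (iii), start from the Rockafellar--Uryasev form ${\rm CVaR}_\beta(l)=\min_s\{s+\frac1{1-\beta}\mathbb{E}_P(l-s)_+\}$. Because the law of $l$ has a continuous density, the minimizer $s=q_{\beta,P,l}$ is unique and $P(l=q)=0$. Danskin's theorem, combined with differentiation under the integral (the kink set has measure zero), then gives $\nabla_a{\rm CVaR}=\frac1{1-\beta}\mathbb{E}_P[\mathds 1_{\{l>q\}}\nabla_a l]$. This yields the stated $\omega_P$, with $\mathbb{E}_P\omega_P=P(l>q)/(1-\beta)=1$ and $\bar\omega=1/(1-\beta)$.

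For the stability bound, write $q_\zeta:=q_{\beta,\eta(\cdot\mid\theta_i;\zeta),l}$ and $q_*:=q_{\beta,\eta(\cdot\mid\theta_i;\zeta^*),l}$, with corresponding distribution functions $F_\zeta,F_*$ of $l$. Then
\[
\mathbb{E}_*|\omega_\zeta-\omega_*|=\tfrac1{1-\beta}\,P_*\big(l\text{ lies between }q_\zeta\text{ and }q_*\big)\le\tfrac{\bar p_\beta}{1-\beta}\,|q_\zeta-q_*| .
\]
To control the quantile gap, use $F_\zeta(q_\zeta)=F_*(q_*)=\beta$, so that $|F_*(q_\zeta)-F_*(q_*)|=|F_*(q_\zeta)-F_\zeta(q_\zeta)|\le\dd_{\rm TV}$ (test with an indicator). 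The lower density bound $\underline p_\beta$ near the quantiles converts this into $|q_\zeta-q_*|\le\dd_{\rm TV}/\underline p_\beta$, which gives $\lambda_i^{\rm al}=\bar p_\beta/((1-\beta)\underline p_\beta)$.

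The main obstacle is part (iii). It must be made precise that the density assumptions concern the law of the loss $l$ under $P$ rather than of $\theta_{-i}$. It must also be ensured that this law has no atom at the quantile for $a$ in a neighbourhood, which is what justifies the Danskin and interchange argument and the mean-value step for $F_*$. I would impose this directly as the interpretation of ``$P$ admits a continuous density''.
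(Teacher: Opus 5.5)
Your proof is correct, and it is evidently the argument the authors omitted as straightforward: it reproduces exactly the constants $\bar\omega=e^{2\gamma\bar l}$, $\lambda_i^{\rm al}=2e^{2\gamma\bar l}$ and $\lambda_i^{\rm al}=\bar p_\beta/((1-\beta)\underline p_\beta)$, and your reading of the hypothesis in (iii) as a condition on the law of $l$ rather than of $\xi$ is genuinely needed (if $l$ does not depend on $\xi$, the stated weight is $\omega_P\equiv 0$). One simplification in (iii): since $F_\zeta(q_\zeta)=F_*(q_*)=\beta$, you have $P_*\bigl(l\text{ between }q_\zeta\text{ and }q_*\bigr)=|F_*(q_\zeta)-F_\zeta(q_\zeta)|\le \dd_{\rm TV}$ directly, which gives the smaller constant $1/(1-\beta)$ without the density bounds and without needing the whole interval between the two quantiles to lie where those bounds hold.
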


Next, we impose a stability condition on the underlying conditional distribution $\eta(\cdot\mid\theta_i;\zeta)$
with respect to the parameter $\zeta$
in a neighborhood of the true parameter $\zeta^*$.

\begin{assumption}
\label{assumption:lipschitz_posterior_distribution}
    For each $i\in N$ and $\theta_i\in \Theta_i$, 
    there exist constants $L_{\eta,i}>0$, $r_i\in (0,2]$, 
    and $\epsilon_i>0$ such that
    \begin{eqnarray} \label{eqn:lipschitz_posterior_distribution}
        \dd_{TV} \left(\eta(\cdot\mid\theta_i;\zeta), \eta(\cdot\mid\theta_i;\zeta^*)\right) \leq L_{\eta,i} \| \zeta - \zeta^* \|^{r_i}, \quad \forall\, \zeta\in \mathbb{B}(\zeta^*,\epsilon_i).
    \end{eqnarray}
\end{assumption}

While Assumption~\ref{assump:rabl-common-primitive}(b) requires the continuity of the conditional distribution $\eta(\cdot\mid\theta_i;\zeta)$ in $\theta_i$, 
Assumption~\ref{assumption:lipschitz_posterior_distribution} imposes continuity in the parameter $\zeta$ near $\zeta^*$. 
The latter can be verified directly from the conditional density, as it holds with $r_i=1$ whenever
\begin{eqnarray*}
    \frac{1}{2}
    \int_{\Theta_{-i}}
    \left| p(\theta_{-i}\mid\theta_i;\zeta) - p(\theta_{-i}\mid\theta_i;\zeta') \right|\, d\theta_{-i}
    \leq
    L_{\eta,i}\,\|\zeta-\zeta'\|
\end{eqnarray*}
holds uniformly in $\theta_i$. 
This condition is satisfied by a broad class of parametric families commonly employed in Bayesian learning, 
in particular members of the exponential family,
including the Gaussian distribution  \citep{devroye2018total,arbas2023polynomial} 
and the exponential distribution \citep{wainwright2008graphical,mahpud2025differentially}.

We further assume that the marginal loss is uniformly bounded for each player $i$, which is typically the case in practical problems whenever $\mathcal{A}$ and $\Theta$ are compact and $\nabla_{a_i} c$ is continuous.

\begin{assumption} \label{assumption:marginal-cost-bounded}
    For each $i\in N$, there exists $\bar{g}_i$ such that 
    \begin{eqnarray*}
        \sup_{a\in \mathcal{A}, \theta\in \Theta} \|\nabla_{a_i} c_i(a_i, a_{-i}, \theta_i, \theta_{-i})\| \leq \bar{g}_i.
    \end{eqnarray*}
\end{assumption}

Building on the preceding three assumptions, we first establish stability results for two quantities: the marginal risk in a player's own action induced by the aleatoric uncertainty, and the marginal risk induced jointly by the aleatoric and epistemic uncertainties under the formulation of nested risk measures.

\begin{lemma} \label{lemma:gradient-al-rm-tv}
    Let 
    Assumptions~\ref{assumption:gradient-regular}-\ref{assumption:marginal-cost-bounded} 
    hold. 
    Then, for each $i\in N$, $\theta_i\in \Theta_i$, and $\zeta\in \mathbb{B}(\zeta^*,\epsilon_i)$,
    \begin{eqnarray} \label{eqn:gradient-al-rm-tv}
        &&\left\| \nabla_{a_i} \rho_{\eta(\cdot\mid\theta_i;\zeta)}^{\rm al} 
        \left(c_i\left(a_i,f_{-i}(\theta_{-i}),\theta_i,\theta_{-i} \right) \right)
        - \nabla_{a_i} \rho_{\eta(\cdot\mid\theta_i;\zeta^*)}^{\rm al} 
        \left(c_i\left(a_i,f_{-i}(\theta_{-i}),\theta_i,\theta_{-i} \right) \right) \right\| 
          \nonumber \\
        && \qquad \leq L_{\eta,i} (\bar\omega_i^{\rm al}\bar g_i 
		+
		\bar g_i\lambda_i^{\rm al}) \|\zeta - \zeta^*\|^{r_i},
    \end{eqnarray}
    and 
    \begin{eqnarray} \label{eqn:gradient-ep-rm-tv} 
        &&\left\|
\nabla_{a_i}v_{i,\mu_i^t}(a_i,f_{-i},\theta_i)
-
\nabla_{a_i}v_{i,\delta_{\zeta^*}}(a_i,f_{-i},\theta_i)
\right\| \nonumber\\
&& \qquad \leq \bar{\omega}_i^{\rm ep}  L_{\eta,i} (\bar\omega_i^{\rm al}\bar g_i 
		+
		\bar g_i\lambda_i^{\rm al}) 
        E_{\mu_i^t} \left[  \|\zeta - \zeta^*\|^2 \right] ^{\frac{r_i}{2}} 
        + 2 \frac{\bar{\omega}_i^{\rm ep} \bar{g}_i}{\epsilon_i^2} 
        E_{\mu_i^t} \left[  \|\zeta - \zeta^*\|^2 \right].
    \end{eqnarray}
\end{lemma}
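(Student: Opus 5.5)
The plan is to use the gradient-regular representation (Definition~\ref{def:gradient-regular}) for both risk measures. The difference of gradients then splits into a term where only the measure changes and a term where only the weight function changes.

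\emph{First bound \eqref{eqn:gradient-al-rm-tv}.} Write $g(\theta_{-i}):=\nabla_{a_i}c_i(a_i,f_{-i}(\theta_{-i}),\theta_i,\theta_{-i})$, so that $\|g\|\le \bar g_i$ by Assumption~\ref{assumption:marginal-cost-bounded}. Abbreviate $\omega_\zeta:=\omega^{\rm al}_{i,\eta(\cdot\mid\theta_i;\zeta)}$ and $\eta_\zeta:=\eta(\cdot\mid\theta_i;\zeta)$. Then
\[
\mathbb{E}_{\eta_\zeta}[\omega_\zeta g]-\mathbb{E}_{\eta_{\zeta^*}}[\omega_{\zeta^*} g]
=\bigl(\mathbb{E}_{\eta_\zeta}-\mathbb{E}_{\eta_{\zeta^*}}\bigr)[\omega_\zeta g]+\mathbb{E}_{\eta_{\zeta^*}}[(\omega_\zeta-\omega_{\zeta^*})g].
\]
The integrand $\omega_\zeta g$ is bounded by $\bar\omega_i^{\rm al}\bar g_i$, so the first term is at most a constant times $\bar\omega^{\rm al}_i\bar g_i\,\dd_{TV}(\eta_\zeta,\eta_{\zeta^*})$. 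Here the factor $1/2$ in \eqref{eqn:tv-definition-2} is absorbed, possibly up to a factor $2$ that I would track or fold into the constant as the paper does. The second term is at most $\bar g_i\lambda_i^{\rm al}\dd_{TV}$ by \eqref{eqn:gradient-regular-tvnorm}. Applying Assumption~\ref{assumption:lipschitz_posterior_distribution} for $\zeta\in\mathbb{B}(\zeta^*,\epsilon_i)$ then gives \eqref{eqn:gradient-al-rm-tv}.

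\emph{Then bound \eqref{eqn:gradient-ep-rm-tv}.} Apply gradient regularity of $\rho^{\rm ep}$ to the loss $\zeta\mapsto\rho^{\rm al}_{\eta_\zeta}(c_i(\cdot))$. This gives
\[
\nabla_{a_i}v_{i,\mu_i^t}=\mathbb{E}_{\mu_i^t}\bigl[\omega^{\rm ep}(\zeta)G(\zeta)\bigr],\qquad G(\zeta):=\nabla_{a_i}\rho^{\rm al}_{\eta_\zeta}(\cdot).
\]
Under the Dirac measure $\delta_{\zeta^*}$, any risk measure is the identity (by cash invariance and law invariance), so $\nabla_{a_i}v_{i,\delta_{\zeta^*}}=G(\zeta^*)$. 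Because $\mathbb{E}_{\mu_i^t}[\omega^{\rm ep}]=1$, the difference equals $\mathbb{E}_{\mu_i^t}[\omega^{\rm ep}(\zeta)(G(\zeta)-G(\zeta^*))]$, and its norm is at most $\bar\omega_i^{\rm ep}\mathbb{E}_{\mu_i^t}\|G(\zeta)-G(\zeta^*)\|$. Split this expectation over $\mathbb{B}(\zeta^*,\epsilon_i)$ and its complement.
\begin{itemize}
\item On the ball, use \eqref{eqn:gradient-al-rm-tv}, which gives $\mathbb{E}\|\zeta-\zeta^*\|^{r_i}$. By Jensen (concavity of $x\mapsto x^{r_i/2}$, since $r_i\le2$) this is at most $(\mathbb{E}\|\zeta-\zeta^*\|^2)^{r_i/2}$.
\item Off the ball, $\|G(\zeta)\|\le \mathbb{E}[\omega^{\rm al}]\bar g_i=\bar g_i$, because the weights are nonnegative and average to one. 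Hence $\|G(\zeta)-G(\zeta^*)\|\le 2\bar g_i$. By Markov/Chebyshev, $\mu_i^t(\|\zeta-\zeta^*\|\ge\epsilon_i)\le \mathbb{E}\|\zeta-\zeta^*\|^2/\epsilon_i^2$.
\end{itemize}
Summing the two pieces yields exactly the two terms in \eqref{eqn:gradient-ep-rm-tv}.

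\emph{Main obstacle.} The delicate step is justifying the outer representation. The inner "loss" $\zeta\mapsto\rho^{\rm al}_{\eta_\zeta}(c_i)$ must be differentiable in $a_i$ with gradient $G(\zeta)$, and differentiation must commute with the outer expectation. I would argue that this follows from Definition~\ref{def:gradient-regular} applied at the outer level, together with measurability of $G$ in $\zeta$, taking the paper's standing well-definedness convention.
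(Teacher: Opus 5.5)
Your proposal is correct and follows the paper's proof essentially step for step. It uses the same add-and-subtract split of the aleatoric gradient difference into a change-of-measure term and a change-of-weight term. For the nested objective it likewise relies on $\mathbb{E}_{\mu_i^t}[\omega^{\rm ep}]=1$, the ball/complement split with the $2\bar g_i$ bound off the ball, Markov's inequality, and H\"older/Jensen to pass from $\mathbb{E}\|\zeta-\zeta^*\|^{r_i}$ to $(\mathbb{E}\|\zeta-\zeta^*\|^2)^{r_i/2}$.

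The factor $2$ you flag is genuine. With the $\frac12$ in \eqref{eqn:tv-definition-2}, the change-of-measure term is bounded by $2\bar\omega_i^{\rm al}\bar g_i\,\dd_{\rm TV}$, and the paper's proof drops this $2$. As written, the argument therefore supports the constant $2\bar\omega_i^{\rm al}\bar g_i+\bar g_i\lambda_i^{\rm al}$, unless $\dd_{\rm TV}$ is taken without the $\frac12$ (as the paper itself does in some later computations).
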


\noindent
\textbf{Proof.}
We begin with the aleatoric risk measure $\rho^{\rm al}$. By Assumption~\ref{assumption:gradient-regular},
\begin{eqnarray} \label{eqn:proof-gradient-aleatoric-tv}
    &&\left\| \nabla_a \rho_{\eta(\cdot\mid\theta_i;\zeta)}^{\rm al} 
        \left(c_i\left(a_i,f_{-i}(\theta_{-i}),\theta_i,\theta_{-i} \right) \right)
        - \nabla_a \rho_{\eta(\cdot\mid\theta_i;\zeta^*)}^{\rm al} 
        \left(c_i\left(a_i,f_{-i}(\theta_{-i}),\theta_i,\theta_{-i} \right) \right) \right\|\nonumber\\
    & = & \Big\| \mathbb E_{\eta(\cdot\mid\theta_i;\zeta)}
		\left[
		\omega_{i,\eta(\cdot\mid\theta_i;\zeta)}^{\rm al}(\theta_{-i})
		\nabla_{a_i}c_i(a_i,f_{-i}(\theta_{-i}),\theta_i,\theta_{-i})
		\right]\nonumber\\
    && -
        \mathbb E_{\eta(\cdot\mid\theta_i;\zeta^*)}
		\left[
		\omega_{i,\eta(\cdot\mid\theta_i;\zeta^*)}^{\rm al}(\theta_{-i})
		\nabla_{a_i}c_i(a_i,f_{-i}(\theta_{-i}),\theta_i,\theta_{-i})
		\right] \Big\| \nonumber\\
    &\leq& \Big\| \mathbb E_{\eta(\cdot\mid\theta_i;\zeta)}
		\left[
		\omega_{i,\eta(\cdot\mid\theta_i;\zeta)}^{\rm al}(\theta_{-i})
		\nabla_{a_i}c_i(a_i,f_{-i}(\theta_{-i}),\theta_i,\theta_{-i})
		\right]\nonumber\\
    && - \mathbb E_{\eta(\cdot\mid\theta_i;\zeta^*)}
		\left[
		\omega_{i,\eta(\cdot\mid\theta_i;\zeta)}^{\rm al}(\theta_{-i})
		\nabla_{a_i}c_i(a_i,f_{-i}(\theta_{-i}),\theta_i,\theta_{-i})
		\right]\Big\|\nonumber\\
    && +
        \Big\| \mathbb E_{\eta(\cdot\mid\theta_i;\zeta^*)}
		\left[
		\left( \omega_{i,\eta(\cdot\mid\theta_i;\zeta)}^{\rm al}(\theta_{-i}) - \omega_{i,\eta(\cdot\mid\theta_i;\zeta^*)}^{\rm al}(\theta_{-i})
        \right)
		\nabla_{a_i}c_i(a_i,f_{-i}(\theta_{-i}),\theta_i,\theta_{-i})
		\right] \Big\|\nonumber\\
    &\leq & \bar\omega_i^{\rm al}\bar g_i \dd_{TV}(\eta(\cdot\mid\theta_i;\zeta),\eta(\cdot\mid\theta_i;\zeta^*))
		+
		\bar g_i\lambda_i^{\rm al}\dd_{TV}(\eta(\cdot\mid\theta_i;\zeta),\eta(\cdot\mid\theta_i;\zeta^*)).
\end{eqnarray}
In the last inequality, the first term follows from the definition of the total variation metric \eqref{eqn:tv-definition-2} together with Assumption~\ref{assumption:marginal-cost-bounded}, while the second term follows from \eqref{eqn:gradient-regular-tvnorm} in Assumption~\ref{assumption:gradient-regular}. Combining \eqref{eqn:proof-gradient-aleatoric-tv} with Assumption~\ref{assumption:lipschitz_posterior_distribution} yields \eqref{eqn:gradient-al-rm-tv}.

We now turn to the epistemic risk measure $\rho^{\rm ep}$. By an analogous argument,
\begin{eqnarray} \label{eqn:proof-gradient-epistemic-tv}
   && \left\|
\nabla_{a_i}v_{i,\mu_i^t}(a_i,f_{-i},\theta_i)
-
\nabla_{a_i}v_{i,\delta_{\zeta^*}}(a_i,f_{-i},\theta_i)
\right\| \nonumber\\
&= & \left\| \mathbb E_{\mu_i^t}
		\left[
		\omega_{i,\mu_i^t}^{\rm ep}(\zeta)
		\nabla_{a_i}
		\rho^{\rm al}_{\eta(\cdot\mid\theta_i;\zeta)}
		\left[ c_i(a_i,f_{-i}(\theta_{-i}),\theta_i,\theta_{-i}) \right]
		\right] 
        - 
        \nabla_{a_i}
		\rho^{\rm al}_{\eta(\cdot\mid\theta_i;\zeta^*)}
		\left[
		c_i(a_i,f_{-i}(\theta_{-i}),\theta_i,\theta_{-i})
		\right] \right\| \nonumber\\
& = & \left\| \mathbb E_{\mu_i^t}
		\left[
		\omega_{i,\mu_i^t}^{\rm ep}(\zeta)
		  \left(
          \nabla_{a_i}
		\rho^{\rm al}_{\eta(\cdot\mid\theta_i;\zeta)}
		\left[
		c_i(a_i,f_{-i}(\theta_{-i}),\theta_i,\theta_{-i})
		\right]
		 - \nabla_{a_i}
		\rho^{\rm al}_{\eta(\cdot\mid\theta_i;\zeta^*)}
		\left[
		c_i(a_i,f_{-i}(\theta_{-i}),\theta_i,\theta_{-i})
		\right]
        \right)
        \right] \right\| \nonumber\\
&\leq & \bar{\omega}_i^{\rm ep} 
\mathbb E_{\mu_i^t}
		\left[
          \left\| \nabla_{a_i}
		\rho^{\rm al}_{\eta(\cdot\mid\theta_i;\zeta)}
		\left[
		c_i(a_i,f_{-i}(\theta_{-i}),\theta_i,\theta_{-i})
		\right]
		 - \nabla_{a_i}
		\rho^{\rm al}_{\eta(\cdot\mid\theta_i;\zeta^*)}
		\left[
		c_i(a_i,f_{-i}(\theta_{-i}),\theta_i,\theta_{-i})
		\right] \right\|
        \right],
\end{eqnarray}
where the first equality comes from the fact that
\begin{eqnarray*}
    v_{i,\delta_{\zeta^*}}(a_i,f_{-i},\theta_i) = \rho^{\rm al}_{\eta(\cdot\mid\theta_i;\zeta^*)}
		\left[
		c_i(a_i,f_{-i}(\theta_{-i}),\theta_i,\theta_{-i})
		\right],
\end{eqnarray*}
and the second equality and the inequality follow from the gradient regularity of the epistemic risk measure.

To bound the right-hand side of \eqref{eqn:proof-gradient-epistemic-tv}, we split the expectation over $\mathbb{B}(\zeta^*, \epsilon_i)$ and its complement $\mathcal{Z}\backslash \mathbb{B}(\zeta^*, \epsilon_i)$. On $\mathbb{B}(\zeta^*, \epsilon_i)$, the bound \eqref{eqn:gradient-al-rm-tv} applies directly. On $\mathcal{Z}\backslash \mathbb{B}(\zeta^*, \epsilon_i)$, we control each gradient term individually using the uniform gradient bound in Assumption~\ref{assumption:marginal-cost-bounded},
\begin{eqnarray*}
    \left\| \nabla_{a_i}
		\rho^{\rm al}_{\eta(\cdot\mid\theta_i;\zeta)}
		\left[
		c_i(a_i,f_{-i}(\theta_{-i}),\theta_i,\theta_{-i})
		\right]
        \right\|
    & = & \left\| \mathbb E_{\eta(\cdot\mid\theta_i;\zeta)}
		\left[
		\omega_{i,\eta(\cdot\mid\theta_i;\zeta)}^{\rm al}(\theta_{-i})
		\nabla_{a_i}c_i(a_i,f_{-i}(\theta_{-i}),\theta_i,\theta_{-i})
		\right] \right\|\\
    & \leq & \bar{g}_i \left\| \mathbb E_{\eta(\cdot\mid\theta_i;\zeta)}
		\left[
		\omega_{i,\eta(\cdot\mid\theta_i;\zeta)}^{\rm al}(\theta_{-i})\right] \right\| = \bar{g}_i,
\end{eqnarray*}
where the last equality follows from the gradient regularity of the aleatoric risk measure. Consequently,
\begin{eqnarray*}
    \eqref{eqn:proof-gradient-epistemic-tv} &\leq& 
    \bar{\omega}_i^{\rm ep}  
    \left( \mathbb{P}_{\mu_i^t} \left( \|\zeta-\zeta^*\| \leq \epsilon_i \right)
        \mathbb E_{\mu_i^t}
        \left[ 
            L_{\eta,i} (\bar\omega_i^{\rm al}\bar g_i 
		+
		\bar g_i\lambda_i^{\rm al}) 
        \|\zeta - \zeta^*\|^{r_i}
        \right]
        +
        2\bar{g}_i \mathbb{P}_{\mu_i^t} \left( \|\zeta-\zeta^*\| > \epsilon_i \right)
    \right)\nonumber\\
    &\leq& \bar{\omega}_i^{\rm ep}  L_{\eta,i} (\bar\omega_i^{\rm al}\bar g_i 
		+
		\bar g_i\lambda_i^{\rm al}) \mathbb E_{\mu_i^t} \left[  \|\zeta - \zeta^*\|^{r_i} \right] 
        + 2 \bar{\omega}_i^{\rm ep} \bar{g}_i \frac{\mathbb E_{\mu_i^t} \left[  \|\zeta - \zeta^*\|^2 \right]} {\epsilon_i^2}\\
    &\leq & \bar{\omega}_i^{\rm ep}  L_{\eta,i} (\bar\omega_i^{\rm al}\bar g_i 
		+
		\bar g_i\lambda_i^{\rm al}) \mathbb E_{\mu_i^t} \left[  \|\zeta - \zeta^*\|^{2} \right]^{\frac{r_i}{2}} 
        + 2 \frac{\bar{\omega}_i^{\rm ep} \bar{g}_i}{\epsilon_i^2} \mathbb E_{\mu_i^t} \left[  \|\zeta - \zeta^*\|^2 \right],
\end{eqnarray*}
where the second inequality follows from Markov's inequality and the third follows from the H\"older inequality. 
Hence, we obtain \eqref{eqn:gradient-ep-rm-tv} as claimed.
\hfill $\Box$

Equipped with the stability result of Lemma~\ref{lemma:gradient-al-rm-tv}, we are ready to establish the non-asymptotic convergence for the BNE-RABL. Building on the contraction property of the best-response operator $\Psi_{i,\mu^t}^t(f_{-i})(\cdot) := \mathcal A_{i,\mu_i^t}^{t*}(f_{-i},\cdot)$ established in Section~\ref{sec:bne-rabl-banach-theorem}, we obtain the following theorem.

\begin{theorem} \label{thm:convergence-rate-bne-rabl}
Consider the interim BNE-RABL in Definition~\ref{def:BNE-HB}, and suppose that Assumptions~\ref{assump:rabl-common-primitive} and \ref{assump:rabl-contraction} hold. Then the following assertions hold.
\begin{itemize}
    \item[(i)] Let $f^\mu$ and $f^{\mu'}$ be two BNE-RABLs corresponding to the posterior profiles $\mu = (\mu_1,\dots, \mu_n)$ and $\mu' = (\mu_1',\dots, \mu_n')$, respectively. Then 
\begin{eqnarray} \label{eqn:theorem-convergence-rabl}
    \|f^{\mu} - f^{\mu'}\|_\infty \leq \max_{i\in N} \sum_{j\in N} \left[(1-\Gamma)^{-1}\right]_{ij} \bar{d}_j
\end{eqnarray}
with
$
    \bar{d}_i
    := \frac{1}{M_i} \sup_{\theta_i\in \Theta_i}
    \left\|
\nabla_{a_i}v_{i,\mu_i'}(f_i^{\mu}(\theta_i),f_{-i}^\mu,\theta_i)
-
\nabla_{a_i}v_{i,\mu_i}(f_i^{\mu}(\theta_i),f_{-i}^\mu,\theta_i)
\right\|,
$
where $M_i$ is defined as in Assumption~\ref{assump:rabl-common-primitive}.

\item[(ii)] Let $f^{t*}$ denote the BNE-RABL at round $t$, and $f^*$ the BNE under the true underlying distribution $\eta(\cdot;\zeta^*)$. If, in addition, Assumptions~\ref{assumption:conv_rate_post_dist}-\ref{assumption:marginal-cost-bounded} hold,
then for any $\epsilon\in (0,1)$,
    there exist positive constants $c_1>0$, $c_2(\epsilon)>0$, and $T(\epsilon)\geq 2$ such that, for any $t>T(\epsilon)$,
    \begin{eqnarray}
    \label{eq:thm6.1-part(ii)}
        \inmat{Prob}_{\eta(\cdot;\zeta^*)}^{t-1}\left( \|f^{t*} - f^*\|_\infty \leq \Big( \max_{i\in N} \sum_{j\in N} \left[(1-\Gamma)^{-1}\right]_{ij} D_j(\e) \Big) (t-1)^{-\underline{r}c_1/2} \right) \geq 1- \epsilon, \quad
    \end{eqnarray}
    where
    $D_i(\epsilon):=\frac{1}{M_i}
\left(\bar{\omega}_i^{\rm ep}  L_{\eta,i} (\bar\omega_i^{\rm al}\bar g_i 
		+
		\bar g_i\lambda_i^{\rm al}) 
        + 2 \frac{\bar{\omega}_i^{\rm ep} \bar{g}_i}{\epsilon_i^2} 
        \right)\max\left\{ c_2(\epsilon)^{\frac{r_i}{2}},c_2(\epsilon)\right\}$ and $\underline{r} = \min_{i\in N} r_i$.
\end{itemize}
\end{theorem}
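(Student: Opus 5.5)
Part (i) is a perturbation version of the proof of Lemma~\ref{lem:rabl-induced-cross-gradient} and Theorem~\ref{thm:rabl-contraction}. Fix $i$ and $\theta_i$, and write $a_i:=f_i^\mu(\theta_i)$ and $a_i':=f_i^{\mu'}(\theta_i)$. These are the unique minimizers (Lemma~\ref{lem:rabl-common-regularity}(ii)) of $v_{i,\mu_i}(\cdot,f_{-i}^\mu,\theta_i)$ and $v_{i,\mu_i'}(\cdot,f_{-i}^{\mu'},\theta_i)$, respectively.

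Writing the two first-order variational inequalities and adding them gives
\[
\left(\nabla_{a_i}v_{i,\mu_i}(a_i,f_{-i}^\mu,\theta_i)-\nabla_{a_i}v_{i,\mu_i'}(a_i',f_{-i}^{\mu'},\theta_i)\right)^\top(a_i-a_i')\le 0 .
\]
Strong monotonicity of $\nabla_{a_i}v_{i,\mu_i'}(\cdot,f_{-i}^{\mu'},\theta_i)$ (modulus $M_i$) then yields
\[
M_i\|a_i-a_i'\|\le \|\nabla_{a_i}v_{i,\mu_i'}(a_i,f_{-i}^{\mu'},\theta_i)-\nabla_{a_i}v_{i,\mu_i}(a_i,f_{-i}^{\mu},\theta_i)\|.
\]
Next, split the right-hand side through the intermediate term $\nabla_{a_i}v_{i,\mu_i'}(a_i,f_{-i}^{\mu},\theta_i)$. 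The first difference changes only the rivals' strategies under the fixed posterior $\mu_i'$, so \eqref{eqn:induced-cross-gradient-rabl} bounds it by $\sum_{j\ne i}K_{ij}\|f_j^\mu-f_j^{\mu'}\|_\infty$. Note that Assumption~\ref{assump:rabl-contraction}(d) is stated for the round's posterior; I will assume it holds uniformly in the posterior. The second difference changes only the posterior, and after dividing by $M_i$ and taking the supremum over $\theta_i$ it is exactly $\bar d_i$.

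Setting $d_i:=\|f_i^\mu-f_i^{\mu'}\|_\infty$, this gives the componentwise inequality $d\le \Gamma d+\bar d$. Since $r(\Gamma)<1$, the Neumann series $(I-\Gamma)^{-1}=\sum_k\Gamma^k$ is a nonnegative matrix. Iterating the inequality gives $d\le \Gamma^k d+\sum_{l<k}\Gamma^l\bar d$, and letting $k\to\infty$ yields $d\le (I-\Gamma)^{-1}\bar d$. Taking the maximum over $i$ gives \eqref{eqn:theorem-convergence-rabl}.

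For part (ii), apply (i) with $\mu=\delta_{\zeta^*}$ profile-wise, so $f^\mu=f^*$, and with $\mu'=\mu^t$. Then $v_{i,\delta_{\zeta^*}}$ is the true-distribution aleatoric objective, and its minimizers define the BNE $f^*$. The perturbation $\bar d_i$ is then controlled by \eqref{eqn:gradient-ep-rm-tv} in Lemma~\ref{lemma:gradient-al-rm-tv}, evaluated at $a_i=f_i^*(\theta_i)$ and $f_{-i}=f_{-i}^*$. That bound is uniform in $\theta_i$ and in the action, so
\[
\bar d_i\le \frac1{M_i}\Bigl(A_i\,\mathbb E_{\mu_i^t}\|\zeta-\zeta^*\|^{2\,r_i/2}+B_i\,\mathbb E_{\mu_i^t}\|\zeta-\zeta^*\|^2\Bigr),
\]
where $A_i=\bar\omega_i^{\rm ep}L_{\eta,i}(\bar\omega_i^{\rm al}\bar g_i+\bar g_i\lambda_i^{\rm al})$ and $B_i=2\bar\omega_i^{\rm ep}\bar g_i/\epsilon_i^2$.

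On the event of Assumption~\ref{assumption:conv_rate_post_dist}, $\mathbb E_{\mu_i^t}\|\zeta-\zeta^*\|^2\le c_2(t-1)^{-c_1}$. Substituting this, using $(t-1)^{-c_1}\le (t-1)^{-r_ic_1/2}\le (t-1)^{-\underline r c_1/2}$ (valid since $r_i\le2$ and $t-1\ge1$), and bounding $c_2^{r_i/2}$ and $c_2$ by their maximum gives $\bar d_i\le D_i(\epsilon)(t-1)^{-\underline r c_1/2}$. Plugging this into \eqref{eqn:theorem-convergence-rabl} then gives \eqref{eq:thm6.1-part(ii)}.

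The main obstacle is the probability bookkeeping. Assumption~\ref{assumption:conv_rate_post_dist} is stated per player, so to make the event hold for all $i$ simultaneously, either apply it with $\epsilon/n$ and absorb the change into $c_2$ and $T$, or read the assumption as a joint statement; I would take the first route. A lesser point is checking that Lemma~\ref{lemma:gradient-al-rm-tv} applies at the equilibrium strategies $f_{-i}^*$, which requires $f_{-i}^*\in\mathcal C_{-i}$. Theorem~\ref{thm:rabl-existence-schauder} provides existence in $\mathcal C$, and uniqueness in $\mathcal F$ follows because the argument of Theorem~\ref{thm:rabl-contraction} is valid on $\mathcal F$.
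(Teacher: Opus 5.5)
Your proof is correct and follows essentially the paper's route. The ingredients are the same: first-order variational inequalities with strong monotonicity, the cross-gradient bound \eqref{eqn:induced-cross-gradient-rabl}, the Neumann series for $(1-\Gamma)^{-1}$, and then Lemma~\ref{lemma:gradient-al-rm-tv} combined with Assumption~\ref{assumption:conv_rate_post_dist}. The differences are minor. You run the variational-inequality step once on the joint perturbation rather than first splitting the best-response maps $\Psi_{i,\mu}$ with a triangle inequality. In part (ii) you take $\mu=\delta_{\zeta^*}$ and $\mu'=\mu^t$, the reverse of the paper's choice. This has the small advantage that the contraction constants are needed only for $\mu^t$, exactly as Assumption~\ref{assump:rabl-contraction}(d) is stated.

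Your union bound over players, invoking Assumption~\ref{assumption:conv_rate_post_dist} at level $\epsilon/n$ and absorbing the change into $c_2$ and $T$, is a genuine repair. The paper's proof bounds each $\bar d_i$ on its own probability-$(1-\epsilon)$ event and then uses all of them simultaneously in the final bound, without accounting for the intersection of these events.
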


\noindent
\textbf{Proof.}
\underline{Part (i).}
Since $f^\mu$ and $f^{\mu'}$ are two BNE-RABLs corresponding to the posterior profiles $\mu$ and $\mu'$ respectively,
then
$f^\mu = \Psi_{\mu}(f^\mu)$ and $f^{\mu'} = \Psi_{\mu'}(f^{\mu'})$. 
For $i\in N$,
\begin{eqnarray*} 
    \|f_i^\mu - f_i^{\mu'}\|_\infty &=& \| \Psi_{i,\mu}(f_{-i}^\mu) - \Psi_{i,\mu'}(f_{-i}^{\mu'}) \|_\infty \nonumber\\
    &\leq& \| \Psi_{i,\mu}(f_{-i}^\mu) - \Psi_{i,\mu'}(f_{-i}^{\mu}) \|_\infty + 
    \| \Psi_{i,\mu'}(f_{-i}^{\mu}) - \Psi_{i,\mu'}(f_{-i}^{\mu'}) \|_\infty \nonumber\\
    &\leq & \| \Psi_{i,\mu}(f_{-i}^\mu) - \Psi_{i,\mu'}(f_{-i}^{\mu}) \|_\infty + 
    \sum_{j\neq i}\Gamma_{ij} \|f_j^\mu - f_j^{\mu'}\|_\infty,
\end{eqnarray*}
where the last inequality comes from Lemma~\ref{lem:rabl-induced-cross-gradient}.
Let 
$ d_i:=\|f_i^\mu-f_i^{\mu'}\|_\infty$ 
and 
$ \tilde d_i:=\| \Psi_{i,\mu}(f_{-i}^\mu) - \Psi_{i,\mu'}(f_{-i}^{\mu}) \|_\infty$ for $i\in N.$
Let
$d=(d_1,\ldots,d_n)^\top$
and 
$\tilde d=(\tilde d_1,\ldots, \tilde d_n)^\top$. 
Since $\Gamma_{ii}=0$ for $i\in N$, we can write the inequalities 
above in the vector form
$
    d \leq \tilde{d} + \Gamma d,
$
and subsequently 
$
    (1-\Gamma)d \leq \tilde{d}.
$
Moreover, since $r(\Gamma)< 1$ under Assumption~\ref{assump:rabl-contraction}(e), $(1-\Gamma)^{-1}=\sum_{k=0}^\infty \Gamma^k$ which is nonnegative componentwise.
Hence
\begin{eqnarray} 
\label{eqn:proof-nonexpansive-0}
    d \leq (1-\Gamma)^{-1}\tilde{d}.
\end{eqnarray}

Next, we bound the term $\tilde{d}_i =\| \Psi_{i,\mu}(f_{-i}^\mu) - \Psi_{i,\mu'}(f_{-i}^{\mu}) \|_\infty$ in \eqref{eqn:proof-nonexpansive-0}. Fix $i\in N$ and $\theta_i\in \Theta_i$, and set $a_{i,\mu}:= \Psi_{i,\mu}(f^\mu)(\theta_i)$ and $a_{i,\mu'}:= \Psi_{i,\mu'}(f_{\mu})(\theta_i)$. If $a_{i,\mu}=a_{i,\mu'}$, then $\|a_{i,\mu}-a_{i,\mu'}\| =0$, and the bound below \eqref{eqn:proof-nonexpansive-6.5} holds trivially. 
Consider the case that $a_{i,\mu}\neq a_{i,\mu'}$. Since $a_{i,\mu}$ minimizes $v_{i,\mu_i}(a_i,f_{-i}^\mu,\theta_i)$ over $\mathcal A_i$, the first-order optimality condition gives
\begin{eqnarray} \label{eqn:proof-nonexpansive-1}
        \nabla_{a_i}v_{i,\mu_i}(a_{i,\mu},f_{-i}^\mu,\theta_i)^\top
    (a_{i,\mu'}-a_{i,\mu})\geq0 .
\end{eqnarray}
Likewise, since $a_{i,\mu'}$ minimizes $v_{i,\mu_i'}(a_i,f_{-i}^\mu,\theta_i)$,
\begin{eqnarray} \label{eqn:proof-nonexpansive-2}
    \nabla_{a_i}v_{i,\mu_i'}(a_{i,\mu'}, f_{-i}^\mu,\theta_i)^\top
    (a_{i,\mu}-a_{i,\mu'})\geq 0 .
\end{eqnarray}
Adding \eqref{eqn:proof-nonexpansive-1} and \eqref{eqn:proof-nonexpansive-2} gives rise to 
\begin{eqnarray} \label{eqn:proof-nonexpansive-3}
\left(
\nabla_{a_i}v_{i,\mu_i} (a_{i,\mu},f_{-i}^\mu,\theta_i)
-
\nabla_{a_i}v_{i,\mu_i'}(a_{i,\mu'},f_{-i}^\mu,\theta_i)
\right)^\top
(a_{i,\mu}-a_{i,\mu'}) \leq 0.
\end{eqnarray}
Moreover, since $v_{i,\mu_i'}(a_i,f_{-i}^\mu,\theta_i)$ is strongly convex in $a_i$, its gradient is strongly monotonic, whence
\begin{eqnarray} \label{eqn:proof-nonexpansive-4}
&&
\left(
\nabla_{a_i}v_{i,\mu_i'}(a_{i,\mu},f_{-i}^\mu,\theta_i)
-
\nabla_{a_i}v_{i,\mu_i'}(a_{i,\mu'},f_{-i}^\mu,\theta_i)
\right)^\top
(a_{i,\mu}-a_{i,\mu'})  \geq
M_i\|a_{i,\mu}-a_{i,\mu'}\|^2.\qquad
\end{eqnarray}
Combining these results, we obtain
\begin{eqnarray*} 
&& \left\|
\nabla_{a_i}v_{i,\mu_i'}(a_{i,\mu},f_{-i}^\mu,\theta_i)
-
\nabla_{a_i}v_{i,\mu_i}(a_{i,\mu},f_{-i}^\mu,\theta_i)
\right\|
\|a_{i,\mu}-a_{i,\mu'}\| \nonumber\\
&\geq&
\left(
\nabla_{a_i}v_{i,\mu_i'}(a_{i,\mu},f_{-i}^\mu,\theta_i)
-
\nabla_{a_i}v_{i,\mu_i}(a_{i,\mu},f_{-i}^\mu,\theta_i)
\right)^\top
(a_{i,\mu}-a_{i,\mu'}) \nonumber\\
&=&
\left(
\nabla_{a_i}v_{i,\mu_i'}(a_{i,\mu},f_{-i}^\mu,\theta_i)
-
\nabla_{a_i}v_{i,\mu_i'} (a_{i,\mu'},f_{-i}^\mu,\theta_i)
\right)^\top
(a_{i,\mu}-a_{i,\mu'}) \nonumber\\
&&+
\left(
\nabla_{a_i}v_{i,\mu_i'} (a_{i,\mu'},f_{-i}^\mu,\theta_i)
-
\nabla_{a_i}v_{i,\mu_i}(a_{i,\mu},f_{-i}^\mu,\theta_i)
\right)^\top
(a_{i,\mu}-a_{i,\mu'})\nonumber\\
&\geq & M_i\|a_{i,\mu} - a_{i,\mu'}\|^2,
\end{eqnarray*}
where the first inequality follows from the Cauchy--Schwarz inequality and the last inequality follows from \eqref{eqn:proof-nonexpansive-3} and \eqref{eqn:proof-nonexpansive-4}. Dividing both sides by $M_i \|a_{i,\mu}-a_{i,\mu'}\|$ and recalling that $a_{i,\mu}= \Psi_{i,\mu}(f^\mu)(\theta_i)=f_i^{\mu}(\theta_i)$, we obtain
\begin{eqnarray}
    \|a_{i,\mu} - a_{i,\mu'}\|
    &\leq&
    \frac{1}{M_i}
    \left\|
\nabla_{a_i}v_{i,\mu_i'}(a_{i,\mu},f_{-i}^\mu,\theta_i)
-
\nabla_{a_i}v_{i,\mu_i}(a_{i,\mu},f_{-i}^\mu,\theta_i)
\right\|\nonumber\\
& =  & \frac{1}{M_i}
    \left\|
\nabla_{a_i}v_{i,\mu_i'}(f_i^{\mu}(\theta_i),f_{-i}^\mu,\theta_i)
-
\nabla_{a_i}v_{i,\mu_i}(f_i^{\mu}(\theta_i),f_{-i}^\mu,\theta_i)
\right\|
\label{eqn:proof-nonexpansive-6.5}
\end{eqnarray}
and thus
\begin{eqnarray}
    \tilde{d}_i&=& \| \Psi_{i,\mu}(f_{-i}^\mu) - \Psi_{i,\mu'}(f_{-i}^{\mu}) \|_\infty \nonumber\\
    &\leq& \frac{1}{M_i} \sup_{\theta_i\in \Theta_i}
    \left\|
\nabla_{a_i}v_{i,\mu_i'}(f_i^{\mu}(\theta_i),f_{-i}^\mu,\theta_i)
-
\nabla_{a_i}v_{i,\mu_i}(f_i^{\mu}(\theta_i),f_{-i}^\mu,\theta_i)
\right\|. \label{eqn:proof-nonexpansive-6}
\end{eqnarray}
Substituting \eqref{eqn:proof-nonexpansive-6} into \eqref{eqn:proof-nonexpansive-0},
we arrive at \eqref{eqn:theorem-convergence-rabl}.

\underline{Part (ii).}
By setting $\mu=\mu^t$ and $\mu_i'=\delta_{\zeta^*}$ for each $i\in N$ in Part~(i), 
we have 
\begin{eqnarray*} 
    \|f^{t*} - f^{*}\|_\infty \leq \max_{i\in N} \sum_{j\in N} \left[(1-\Gamma)^{-1}\right]_{ij} \bar{d}_j,
\end{eqnarray*}
where
\begin{eqnarray}
    \bar{d}_i
    &:=& \frac{1}{M_i} \sup_{\theta_i\in \Theta_i}
    \left\|
\nabla_{a_i}v_{i,\delta_{\zeta^*}}(f_i^{\mu}(\theta_i),f_{-i}^\mu,\theta_i)
-
\nabla_{a_i}v_{i,\mu_i^t}(f_i^{\mu}(\theta_i),f_{-i}^\mu,\theta_i)
\right\|\nonumber\\
&\leq & \frac{1}{M_i}
\left(\bar{\omega}_i^{\rm ep}  L_{\eta,i} (\bar\omega_i^{\rm al}\bar g_i 
		+
		\bar g_i\lambda_i^{\rm al}) 
        \bbe_{\mu_i^t} \left[  \|\zeta - \zeta^*\|^2 \right] ^{\frac{r_i}{2}} 
        + 2 \frac{\bar{\omega}_i^{\rm ep} \bar{g}_i}{\epsilon_i^2} 
        \bbe_{\mu_i^t} \left[  \|\zeta - \zeta^*\|^2 \right]\right),
        \label{eq:d_ibar???}
\end{eqnarray}
and the inequality follows from Lemma~\ref{lemma:gradient-al-rm-tv}.
Finally, under Assumption~\ref{assumption:conv_rate_post_dist}, 
we can use \eqref{eq:Assump-6.1} to
obtain an upper bound for the rhs of equation \eqref{eq:d_ibar???}
with probability at least $1-\epsilon$:
\begin{eqnarray*}
    \bar{d}_i &\leq& \frac{1}{M_i}
\left(\bar{\omega}_i^{\rm ep}  L_{\eta,i} (\bar\omega_i^{\rm al}\bar g_i 
		+
		\bar g_i\lambda_i^{\rm al}) 
        c_2(\epsilon)^{\frac{r_i}{2}} (t-1)^{-c_1\frac{r_i}{2}} 
        + 2 \frac{\bar{\omega}_i^{\rm ep} \bar{g}_i}{\epsilon_i^2} 
        c_2(\epsilon) (t-1)^{-c_1}\right)\\
        &\leq& \frac{1}{M_i}
\left(\bar{\omega}_i^{\rm ep}  L_{\eta,i} (\bar\omega_i^{\rm al}\bar g_i 
		+
		\bar g_i\lambda_i^{\rm al}) 
        + 2 \frac{\bar{\omega}_i^{\rm ep} \bar{g}_i}{\epsilon_i^2} 
        \right) \max\left\{ c_2(\epsilon)^{\frac{r_i}{2}},c_2(\epsilon)\right\} (t-1)^{-c_1\frac{r_i}{2}}, 
\end{eqnarray*}
which yields \eqref{eq:thm6.1-part(ii)}.
\hfill $\Box$

Theorem~\ref{thm:convergence-rate-bne-rabl}(i) shows that the convergence of the BNE-RABL is governed by the convergence of the gradients of the players' objective functions, 
while Theorem~\ref{thm:convergence-rate-bne-rabl}(ii) makes this rate explicit as $t^{-\underline{r} c_1/2}$ with $\underline{r}:=\min_{i\in N} r_i$. 
In particular, when $c_1=1$ as established in~\cite{mou2024diffusion} and $r_i=1$ for all $i\in N$, the convergence rate reduces to $t^{-1/2}$, which coincides with the non-asymptotic convergence rate of the posterior distribution derived in~\cite{mou2024diffusion}.

\subsection{Non-asymptotic convergence of BNE-BPD}

We now turn to the non-asymptotic convergence of BNE-BPD in Definition~\ref{def:BNE-BPD}.
To this end, we first derive the convergence rate of the Bayesian predictive distribution under the following local stability condition for the marginal distribution $\eta_{i}(\cdot;\zeta)$.

\begin{assumption} \label{assumption:6.5}
For each $i\in N$, $p_i(\theta_i;\zeta^*)>0$ for all $\theta_i\in \Theta_i$.
     There exist positive constants $L_{p,i}$ and $\delta$ such that 
     \begin{eqnarray}\label{eqn:assumption-6.5-1}
         \left| \log p_i(\theta_i;\zeta) - \log p_i(\theta_i;\zeta^*) \right| \leq L_{p,i}\|\zeta-\zeta^*\|,\qquad \forall \zeta\in \mathbb{B}(\zeta^*,\delta), 
     \end{eqnarray}
     uniformly for all $\theta_i\in \Theta_i$.
     Moreover, there exists $\bar{l}< +\infty$ such that
     \begin{eqnarray}\label{eqn:assumption-6.5-2}
         \frac{p_i(\theta_i;\zeta)}{p_i(\theta_i;\zeta^*)} < \bar{l},\qquad \forall \zeta\in \mathcal{Z}, \theta_i\in \Theta_i.
     \end{eqnarray}
\end{assumption}

Assumption~\ref{assumption:6.5} imposes Lipschitz continuity of the log marginal density function with respect to the parameter $\zeta$ near the true parameter value $\zeta^*$. 
This condition is widely used in analyzing the convergence and stability of the posterior distribution; see, e.g., Theorem~5.39 in \cite{van2000asymptotic} and Section~3 of \cite{honorio2011lipschitz}.
In particular, it has been verified for several parametric distributions under appropriate conditions in Section~4 of \cite{honorio2011lipschitz}.
Condition \eqref{eqn:assumption-6.5-2} imposes a uniform one-sided domination condition on the marginal density over the family of parametric distributions, i.e., $p_i(\theta_i;\zeta) < \bar{l} p_i(\theta_i;\zeta^*)$ for all $\theta_i\in \Theta_{i}$.  
This condition is satisfied by many families of parametric distributions with a common compact support and densities that are jointly continuous in $\theta_{i}$ and $\zeta$.

\begin{lemma}
\label{lemma:6}
    Under Assumptions~\ref{assumption:conv_rate_post_dist}, \ref{assumption:lipschitz_posterior_distribution}, and \ref{assumption:6.5}, 
    the following assertions hold.
    \begin{itemize}
        \item[(i)] For $\epsilon\in (0,1)$, there exists a positive constant $T(\epsilon)\geq 2$ such that, for all $t>T(\epsilon)$,
        \begin{eqnarray} \label{eqn:convergence-bpd}
            \inmat{Prob}_{\eta(\cdot;\zeta^*)}^{t-1 }\left( \dd_{TV}( \nu_i^t, \eta(\cdot;\zeta^*)  ) \leq c_2^{\rm bp}(\epsilon) (t-1)^{- \min\{ \frac{1}{2}, \frac{r_i}{2} \}c_1}  \right) \geq 1-\epsilon,
        \end{eqnarray}
    \end{itemize}
    where
        $c_2^{\rm bp}(\epsilon):=\left(\exp(L_{p,i}\delta) L_{p_i} + L_{\eta,i} + \frac{2}{\delta^2} \right)  \max\{c_2(\epsilon)^{1/2}, c_2(\epsilon)^{r_i/2}, c_2(\epsilon)\}$ and  $c_2(\epsilon)$ is defined in Assumption~\ref{assumption:conv_rate_post_dist}.

    \item[(ii)] For $\epsilon\in (0,1)$, there exists a positive constant $T(\epsilon,\delta)\geq 2$ such that, for all $t>\max\{T(\epsilon), \hat{T}(\delta,\epsilon)\}$,
    \begin{eqnarray} \label{eqn:convergence-cond-bpd}
        \inmat{Prob}_{\eta(\cdot;\zeta^*)}^{t-1 }\left( 
        \dd_{TV} \left( \nu_i^t(\cdot|\theta_i), \eta(\cdot|\theta_i;\zeta^*) \right)
        \leq c_2^{\rm cbp}(\epsilon) (t-1)^{ -\frac{c_1 r_i}{2}}  \right) \geq 1-\epsilon,
    \end{eqnarray}
    where 
    $
        c_2^{\rm cbp}(\epsilon): = 2\exp(L_{p,i}\delta) 
    \left( L_{\eta,i} \exp(L_{p,i}\delta)+ 2 \frac{\bar{l}}{\delta^2}\right) c_2(\epsilon).  
    $
\end{lemma}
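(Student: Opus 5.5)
Both parts reduce to bounding a TV distance by posterior moments of $\|\zeta-\zeta^*\|$, then invoking Assumption~\ref{assumption:conv_rate_post_dist} on the event of probability at least $1-\epsilon$ where $\mathbb{E}_{\mu_i^t}\|\zeta-\zeta^*\|^2\le c_2(\epsilon)(t-1)^{-c_1}$.

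\textbf{Part (i).} Since $\nu_i^t$ is the posterior mixture $\int p(\cdot;\zeta)\,m_i^t(\zeta)\,d\zeta$, convexity of $\dd_{TV}$ gives
$$\dd_{TV}(\nu_i^t,\eta(\cdot;\zeta^*))\le \mathbb{E}_{\mu_i^t}\bigl[\dd_{TV}(\eta(\cdot;\zeta),\eta(\cdot;\zeta^*))\bigr].$$
Split the expectation over $\mathbb{B}(\zeta^*,\delta)$ and its complement. On the complement, bound the integrand by $1$ and use Markov's inequality, which yields at most $\mathbb{E}_{\mu_i^t}\|\zeta-\zeta^*\|^2/\delta^2$. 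On the ball, factor the joint density as $p(\theta;\zeta)=p_i(\theta_i;\zeta)\,p(\theta_{-i}\mid\theta_i;\zeta)$ and write
$$p(\theta;\zeta)-p(\theta;\zeta^*) = \bigl(p_i(\theta_i;\zeta)-p_i(\theta_i;\zeta^*)\bigr)p(\theta_{-i}\mid\theta_i;\zeta)+p_i(\theta_i;\zeta^*)\bigl(p(\theta_{-i}\mid\theta_i;\zeta)-p(\theta_{-i}\mid\theta_i;\zeta^*)\bigr).$$
\begin{itemize}
\item For the first term, \eqref{eqn:assumption-6.5-1} gives $|p_i(\theta_i;\zeta)/p_i(\theta_i;\zeta^*)-1|\le e^{L_{p,i}\delta}L_{p,i}\|\zeta-\zeta^*\|$ via $|e^x-1|\le e^{|x|}|x|$. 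Integrating, this term contributes $O(\|\zeta-\zeta^*\|)$.
\item For the second term, integrate first in $\theta_{-i}$ and apply Assumption~\ref{assumption:lipschitz_posterior_distribution} pointwise in $\theta_i$. Averaging over $\eta_i(\cdot;\zeta^*)$, it contributes $L_{\eta,i}\|\zeta-\zeta^*\|^{r_i}$.
\end{itemize}
Jensen/H\"older gives $\mathbb{E}\|\zeta-\zeta^*\|^{s}\le(\mathbb{E}\|\zeta-\zeta^*\|^2)^{s/2}$ for $s\le2$. On the good event, all three pieces are then bounded by $\max\{c_2^{1/2},c_2^{r_i/2},c_2\}(t-1)^{-\min\{1/2,r_i/2\}c_1}$, since $t-1\ge1$. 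Collecting constants gives $c_2^{\rm bp}(\epsilon)$.

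\textbf{Part (ii).} Use representation \eqref{eqn:bpd-condition}: $\nu_i^t(\cdot\mid\theta_i)$ is a mixture of $\eta(\cdot\mid\theta_i;\zeta)$ with reweighted posterior density $\tilde m(\zeta)=w(\zeta)m_i^t(\zeta)$, where
$$w(\zeta)=\frac{p_i(\theta_i;\zeta)}{\int_{\mathcal Z} p_i(\theta_i;\xi)\,m_i^t(\xi)\,d\xi}.$$
Convexity again gives $\dd_{TV}\le \int \dd_{TV}(\eta(\cdot\mid\theta_i;\zeta),\eta(\cdot\mid\theta_i;\zeta^*))\,w(\zeta)\,m_i^t(\zeta)\,d\zeta$. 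The key step is a uniform upper bound on $w$.
\begin{itemize}
\item The numerator is at most $\bar l\,p_i(\theta_i;\zeta^*)$ by \eqref{eqn:assumption-6.5-2}, and at most $e^{L_{p,i}\delta}p_i(\theta_i;\zeta^*)$ on the ball.
\item For the denominator, restrict to the ball: $\int p_i(\theta_i;\xi)\,m_i^t(\xi)\,d\xi\ge e^{-L_{p,i}\delta}p_i(\theta_i;\zeta^*)\,\mu_i^t(\mathbb{B}(\zeta^*,\delta))$.
\item By Markov, $\mu_i^t(\mathbb{B}(\zeta^*,\delta))\ge 1/2$ once $c_2(\epsilon)(t-1)^{-c_1}\le\delta^2/2$. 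This defines the threshold $\hat T(\delta,\epsilon)$.
\end{itemize}
Hence $w\le 2e^{2L_{p,i}\delta}$ on the ball and $w\le 2\bar l e^{L_{p,i}\delta}$ off the ball. Splitting as before, the ball part is at most $2e^{2L_{p,i}\delta}L_{\eta,i}(\mathbb{E}\|\zeta-\zeta^*\|^2)^{r_i/2}$ and the tail part is at most $2\bar l e^{L_{p,i}\delta}\,\mathbb{E}\|\zeta-\zeta^*\|^2/\delta^2$. Substituting the posterior rate on the good event yields $(t-1)^{-c_1 r_i/2}$ with constant of the form $c_2^{\rm cbp}(\epsilon)$; the factor $c_2(\epsilon)$ should really read $\max\{c_2^{r_i/2},c_2\}$. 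The bound is uniform in $\theta_i$ because the constants in Assumption~\ref{assumption:6.5} are uniform.

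\textbf{Main obstacle.} The hardest step is the lower bound on the normalizing constant in (ii). It requires posterior mass near $\zeta^*$, which is why the extra time threshold $\hat T(\delta,\epsilon)$ appears. Note also that $\mathbb{B}(\zeta^*,\delta)$ should be taken inside $\mathbb{B}(\zeta^*,\epsilon_i)$, i.e.\ replace $\delta$ by $\min\{\delta,\epsilon_i\}$ if needed, so that Assumption~\ref{assumption:lipschitz_posterior_distribution} applies on the ball.
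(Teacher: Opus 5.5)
Your proposal is correct and follows essentially the same route as the paper: convexity of the total variation distance over the posterior mixture (reweighted by $p_i(\theta_i;\zeta)$ in part (ii)), a split into $\mathbb{B}(\zeta^*,\delta)$ and its complement handled by the log-Lipschitz bound, Markov and H\"older, and in part (ii) a Markov lower bound on the normalizer $\int_{\mathcal Z} p_i(\theta_i;\zeta)m_i^t(\zeta)\,d\zeta$, which is where the threshold $\hat T(\delta,\epsilon)$ comes from. Your two side remarks are also valid corrections to details the paper's proof glosses over. First, the constant in (ii) should carry $\max\{c_2(\epsilon)^{r_i/2},c_2(\epsilon)\}$ rather than $c_2(\epsilon)$. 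Second, $\delta$ must be taken no larger than $\epsilon_i$ so that Assumption~\ref{assumption:lipschitz_posterior_distribution} applies on the ball.
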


\noindent
\textbf{Proof.}
\underline{Part (i).}
Let $l_i(\theta_i,\zeta):= \frac{p_i(\theta_i;\zeta)}{p_i(\theta_i;\zeta^*)}$.
For $\zeta\in \mathbb{B}(\zeta^*,\delta)$,
inequality \eqref{eqn:assumption-6.5-1} implies 
\begin{eqnarray} \label{eqn:joint-BPD-proof}
    \left| \log l_i(\theta_i,\zeta) \right| 
\leq L_{p,i} \|\zeta-\zeta^*\| \leq L_{p,i}\delta,
\end{eqnarray}
and by the inequality $|e^x - 1| \leq e^{|x|} |x|,\; \forall x\in \R$, we have
\begin{eqnarray} \label{eqn:joint-BPD-proof-0}
    \left| l_i(\theta_i,\zeta) - 1  \right| 
    \leq  e^{ \left|\log l_i(\theta_i,\zeta)\right| } \left|\log l_i(\theta_i,\zeta)\right|\leq  
    \exp(L_{p,i}\delta) L_{p,i} \|\zeta-\zeta^*\|.
\end{eqnarray}
On the other hand, by the definition of total variation metric \eqref{eqn:tv-definition-2},
\begin{eqnarray} \label{eqn:joint-BPD-proof-1}
    &&\dd_{TV} (\eta(\cdot;\zeta), \eta(\cdot;\zeta^*) ) =\sup_{h\in \mathcal{M}, \|h\|_\infty\leq 1} \left| \int_{\Theta}h(\theta) (p(\theta;\zeta) - p(\theta;\zeta^*)) d\theta\right|\nonumber\\
    & &= \int_{\Theta} |p(\theta;\zeta) - p(\theta;\zeta^*)| d\theta= \int_{\Theta} |p_i(\theta_i;\zeta)p(\theta_{-i}|\theta_i;\zeta) - p_i(\theta_i;\zeta^*)p(\theta_{-i}|\theta_i;\zeta^*)| d\theta\nonumber\\
    &&\leq  \int_{\Theta} | (p_i(\theta_i;\zeta) - p_i(\theta_i;\zeta^*) )p(\theta_{-i}|\theta_i;\zeta)| d\theta
    + 
    \int_{\Theta} |p_i(\theta_i;\zeta^*) (p(\theta_{-i}|\theta_i;\zeta) - p(\theta_{-i}|\theta_i;\zeta^*))| d\theta\nonumber\\
    && \leq \int_{\Theta_i} | p_i(\theta_i;\zeta) - p_i(\theta_i;\zeta^*) | d\theta_i 
    +
    \int_{\Theta_i} p_i(\theta_i;\zeta^*) \dd_{TV}(\eta(\cdot|\theta_i;\zeta), \eta(\cdot|\theta_i;\zeta^*))d\theta_{i},  
\end{eqnarray}
where the last inequality is obtained by the Fubini-Tonelli theorem and integrating each term over $\Theta_{-i}$.
By the definition of $l_i$ and  \eqref{eqn:joint-BPD-proof-0}, 
\begin{eqnarray} \label{eqn:joint-BPD-proof-2}
    \int_{\Theta_i} | p_i(\theta_i;\zeta) - p_i(\theta_i;\zeta^*) | d\theta_i 
    &=& \int_{\Theta_i} p_i(\theta_i;\zeta^*) \left| l_i(\theta_i,\zeta) - 1  \right| d\theta_i\nonumber\\
    &\leq &\exp(L_{p,i}\delta) L_{p_i} \|\zeta-\zeta^*\|.
\end{eqnarray}
Moreover, by \eqref{eqn:lipschitz_posterior_distribution} in Assumption~\ref{assumption:lipschitz_posterior_distribution},
\begin{eqnarray} \label{eqn:joint-BPD-proof-3}
    \int_{\Theta_i} p_i(\theta_i;\zeta^*) \dd_{TV}(\eta(\cdot|\theta_i;\zeta), \eta(\cdot|\theta_i;\zeta^*))d\theta_{i} 
    \leq L_{\eta,i} \|\zeta -\zeta^*\|^{r_i}.
\end{eqnarray}
Substituting \eqref{eqn:joint-BPD-proof-2} and \eqref{eqn:joint-BPD-proof-3} into \eqref{eqn:joint-BPD-proof-1}, we arrive at
\begin{eqnarray} \label{eqn:joint-BPD-proof-4}
    \dd_{TV} (\eta(\cdot;\zeta), \eta(\cdot;\zeta^*) ) \leq \exp(L_{p,i}\delta) L_{p_i} \|\zeta-\zeta^*\| + L_{\eta,i} \|\zeta -\zeta^*\|^{r_i},
\end{eqnarray}
for all $\zeta\in \mathbb{B}(\zeta^*,\delta)$.

Finally, by the convexity of TV distance, we have 
\begin{eqnarray*}
    &&\dd_{TV}( v_i^t, \eta(\cdot;\zeta^*) ) =  \dd_{TV}\left( \int_\mathcal{Z}\eta(\cdot;\zeta)d\mu_i^t(\zeta), \eta(\cdot;\zeta^*) \right)  
    \leq  \int_\mathcal{Z} \dd_{TV}( \eta(\cdot;\zeta), \eta(\cdot;\zeta^*) ) d\mu_i^t(\zeta)\\
    && = \int_{\mathbb{B}(\zeta^*,\delta)} \dd_{TV}( \eta(\cdot;\zeta), \eta(\cdot;\zeta^*) ) d\mu_i^t(\zeta) + \int_{\mathcal{Z}\backslash \mathbb{B}(\zeta^*,\delta)} \dd_{TV}( \eta(\cdot;\zeta), \eta(\cdot;\zeta^*) ) d\mu_i^t(\zeta) \\
    && \leq \int_\mathcal{Z} \exp(L_{p,i}\delta) L_{p_i} \|\zeta-\zeta^*\| + L_{\eta,i} \|\zeta -\zeta^*\|^{r_i} d\mu_i^t + 2 \mu_i^t (\|\zeta - \zeta^*\|> \delta)  \\
    && \leq \exp(L_{p,i}\delta) L_{p_i} \mathbb{E}_{\mu_i^t} \left[ \|\zeta-\zeta^*\|^{2} \right]^{1/2} + L_{\eta,i}\mathbb{E}_{\mu_i^t} \left[ \|\zeta-\zeta^*\|^{2} \right]^{r_i/2} +  2 \frac{\mathbb{E}_{\mu_i^t} \left[ \|\zeta-\zeta^*\|^{2} \right]}{\delta^2},
\end{eqnarray*}
where the second inequality follows by \eqref{eqn:joint-BPD-proof-4} and the fact that total variation metric is less than or equal to $2$, 
and the third inequality follows by the H\"older inequality and Markov inequality.
By 
Assumption~\ref{assumption:conv_rate_post_dist},
we arrive at \eqref{eqn:convergence-bpd}.

\underline{Part (ii).}
Fix $\theta_i \in \Theta_{i}$.
The conditional density function of BPD is written as 
\begin{eqnarray} \label{eqn:cond-BPD-proof-1}
    q_i^t(\theta_{-i}\mid \theta_i)
    &:=&
    \frac{q_i^t(\theta_i,\theta_{-i})}
    {\int_{\Theta_{-i}}q_i^t(\theta_i,\theta_{-i})d\theta_{-i}}
    =
    \frac{\int_{\mathcal{Z}}p(\theta_i,\theta_{-i};\zeta)m_i^t(\zeta)\,d\zeta}
    {\int_{\Theta_{-i}}\int_{\mathcal{Z}}
        p(\theta_i,\theta_{-i};\zeta)m_i^t(\zeta)\,d\zeta\,d\theta_{-i}
    } \nonumber\\
    & = & \frac{\int_{\mathcal{Z}}p(\theta_{-i}|\theta_{i};\zeta) p_i(\theta_i;\zeta) m_i^t(\zeta)\,d\zeta}
    {\int_{\Theta_{-i}}\int_{\mathcal{Z}}p(\theta_{-i}|\theta_{i};\zeta) p_i(\theta_i;\zeta) m_i^t(\zeta)\,d\zeta d\theta_{-i}} 
    = \frac{\int_{\mathcal{Z}}p(\theta_{-i}|\theta_{i};\zeta) p_i(\theta_i;\zeta) m_i^t(\zeta)\,d\zeta}
    {\int_{\mathcal{Z}} p_i(\theta_i;\zeta) m_i^t(\zeta)\,d\zeta }\nonumber\\
    &=& \frac{\int_{\mathcal{Z}}p(\theta_{-i}|\theta_{i};\zeta) l_i(\theta_i,\zeta) m_i^t(\zeta)\,d\zeta}
    {\int_{\mathcal{Z}} l_i(\theta_i,\zeta) m_i^t(\zeta)\,d\zeta },
\end{eqnarray}
where the third equation comes from Fubini-Tonelli theorem.
By \eqref{eqn:joint-BPD-proof}, 
\begin{eqnarray} \label{eqn:cond-BPD-proof-2}
    \int_{\mathcal{Z}} l_i(\theta_i,\zeta) m_i^t(\zeta)\,d\zeta 
    &\geq&
    \int_{\mathbb{B}(\zeta^*,\delta)} l_i(\theta_i,\zeta) m_i^t(\zeta)\,d\zeta
    \geq
    \exp(-L_{p,i}\delta)\mu_i^t(\|\zeta-\zeta^*\|\leq \delta)\nonumber\\
    &\geq& \exp(-L_{p,i}\delta) \left( 1- \frac{\mathbb{E}_{\mu_i^t}[\|\zeta-\zeta^*\|^2]}{\delta^2} \right),
\end{eqnarray}
where the last inequality comes from the Markov inequality.
For $t$ sufficiently large, i.e., for $t>\hat{T}(\delta,\epsilon)$ with some $\hat{T}(\delta,\epsilon)>2$ such that $\mathbb{E}_{\mu_i^t}[\|\zeta-\zeta^*\|^2]<\frac{\delta^2}{2}$ holds with probability $1-\epsilon$, it follows that
\begin{eqnarray} \label{eqn:cond-BPD-proof-3}
    &&\dd_{TV} \left( \nu_i^t(\cdot|\theta_i), \eta(\cdot|\theta_i;\zeta^*) \right) \nonumber\\
    && = \int_{\Theta_{-i}} \left| \frac{\int_{\mathcal{Z}}p(\theta_{-i}|\theta_{i};\zeta) l_i(\theta_i,\zeta) m_i^t(\zeta)\,d\zeta}
    {\int_{\mathcal{Z}} l_i(\theta_i,\zeta) m_i^t(\zeta)\,d\zeta } - p(\theta_{-i}|\theta_i;\zeta^*) \right| d\theta_{-i}\nonumber\\
    &&\leq \frac{\exp(L_{p,i}\delta)\delta^2}{\delta^2 - \mathbb{E}_{\mu_i^t}[\|\zeta-\zeta^*\|^2]} 
    \int_{\Theta_{-i}} \left| \int_{\mathcal{Z}} \big( p(\theta_{-i}|\theta_{i};\zeta) - p(\theta_{-i}|\theta_i;\zeta^*) \big) l_i(\theta_i,\zeta) m_i^t(\zeta)\,d\zeta  \right| d\theta_{-i}\nonumber\\
    && \leq 2\exp(L_{p,i}\delta) 
     \int_{\mathcal{Z}}  \dd_{TV} \Big( \eta(\cdot|\theta_i;\zeta), \eta(\cdot|\theta_i;\zeta^*) \Big) l_i(\theta_i,\zeta) m_i^t(\zeta)\,d\zeta,
\end{eqnarray}
where the first inequality follows from \eqref{eqn:cond-BPD-proof-2}.
Splitting the integral into two parts, one over the ball $\mathbb{B}(\zeta^*,\delta)$ 
and the other over its complement  $\mathcal{Z}\backslash \mathbb{B}(\zeta^*,\delta)$,
we have 
\begin{eqnarray} \label{eqn:cond-BPD-proof-4}
    &&\int_{\mathcal{Z}}  \dd_{TV} \Big( \eta(\cdot|\theta_i;\zeta), \eta(\cdot|\theta_i;\zeta^*) \Big) l_i(\theta_i,\zeta) m_i^t(\zeta)\,d\zeta\nonumber\\
    &= & \int_{\mathbb{B}(\zeta^*,\delta)}  \dd_{TV} \Big( \eta(\cdot|\theta_i;\zeta), \eta(\cdot|\theta_i;\zeta^*) \Big) l_i(\theta_i,\zeta) m_i^t(\zeta)\,d\zeta\nonumber\\
    &&
    + \int_{\mathcal{Z}\backslash \mathbb{B}(\zeta^*,\delta)}  \dd_{TV} \Big( \eta(\cdot|\theta_i;\zeta), \eta(\cdot|\theta_i;\zeta^*) \Big) l_i(\theta_i,\zeta) m_i^t(\zeta)\,d\zeta\nonumber\\
    &\leq & \int_{\mathbb{B}(\zeta^*,\delta)}  L_{\eta,i}\|\zeta - \zeta^*\|^{r_i} \exp(L_{p,i}\delta) m_i^t(\zeta)\,d\zeta
    + \int_{\mathcal{Z}\backslash \mathbb{B}(\zeta^*,\delta)}  2 \bar{l} m_i^t(\zeta)\,d\zeta\nonumber\\
    & \leq & L_{\eta,i} \exp(L_{p,i}\delta) \mathbb{E}_{\mu_i^t} \left[ \|\zeta - \zeta^*\|^{r_i} \right] + 2 \bar{l} \mu_i^t (\|\zeta - \zeta^*\|> \delta),
\end{eqnarray}
where, in the first inequality, the first term is due to Assumption~\ref{assumption:lipschitz_posterior_distribution} and \eqref{eqn:joint-BPD-proof}, 
and the second term is due to \eqref{eqn:assumption-6.5-2} in Assumption~\ref{assumption:6.5} and the fact that total variation metric is less than or equal to $2$.
Substituting \eqref{eqn:cond-BPD-proof-4} into \eqref{eqn:cond-BPD-proof-3}, and using the H\"older inequality and Markov inequality, 
we arrive at
\begin{eqnarray*}
    &&\dd_{TV} \left( \nu_i^t(\cdot|\theta_i), \eta(\cdot|\theta_i;\zeta^*) \right) \nonumber\\
    && \leq 2\exp(L_{p,i}\delta) 
    \left( L_{\eta,i} \exp(L_{p,i}\delta) \mathbb{E}_{\mu_i^t} \left[ \|\zeta - \zeta^*\|^{2} \right]^{r_i/2} + 2 \frac{\bar{l}}{\delta^2} \mathbb{E}_{\mu_i^t} \left[ \|\zeta - \zeta^*\|^{2} \right] \right).
\end{eqnarray*}
By Assumption~\ref{assumption:conv_rate_post_dist},
we arrive at \eqref{eqn:convergence-cond-bpd}.
\hfill $\Box$

Up to this point, we have obtained the non-asymptotic convergence rates of the BPD and conditional BPD, given the convergence rate of posterior distribution specified in Assumption~\ref{assumption:conv_rate_post_dist}.
This result enables us to establish the convergence rate of BNE-BPD by arguments parallel to those in Section~\ref{sec:convergence-bne-rabl}.
To this end, we require $\rho^{\rm bp}$ to satisfy the regularity condition in Definition~\ref{def:gradient-regular}.
Note that the BPD coincides with the true underlying distribution when the posterior distribution reduces to a Dirac measure at the true parameter, i.e., $\mu = \delta_{\zeta^*}.$
Hence, in the following discussion we write $\eta(\cdot|\theta_i;\zeta^*)$ for the BPD induced by $\mu = \delta_{\zeta^*}$.

\begin{assumption} \label{assumption:gradient-regular-bp}
    For each player $i\in N$, 
    assume that $\rho^{\rm bp}$ is a gradient regular risk measure with 
    $\omega_{i,\nu_i^t(\cdot\mid\theta_i)}^{\rm bp} (\theta_{-i})\in \left[0,  \bar{\omega}_{i}^{\rm bp}\right]$ for all $\theta_{-i}$. 
    Moreover, there exists a constant $\lambda_i^{\rm bp}$ such that 
    \begin{eqnarray} \label{eqn:gradient-regular-tvnorm-bp}
        \mathbb{E}_{\eta(\cdot\mid\theta_i;\zeta^*)} \left[ \left| \omega_{i,\nu_i^t(\cdot|\theta_i)}^{\rm bp}(\theta_{-i}) - \omega_{i,\eta(\cdot|\theta_i;\zeta^*)}^{\rm bp} (\theta_{-i}) \right| \right] 
        \leq 
        \lambda_i^{\rm bp}\, \dd_{TV} \left( \nu_i^t(\cdot|\theta_i), \eta(\cdot|\theta_i;\zeta^*) \right).
    \end{eqnarray}
\end{assumption}

\begin{theorem} \label{thm:convergence-rate-bne-bpd}
Consider the interim BNE-BPD in Definition~\ref{def:BNE-BPD}. 
Under Assumptions~\ref{assumption:bpd-existence}, \ref{assumption:bpd-uniqueness},  \ref{assumption:marginal-cost-bounded}, and \ref{assumption:gradient-regular-bp}, the following assertions hold.
\begin{itemize}
    \item[(i)] For each $i\in N$, $\theta_i\in \Theta_i$, 
\begin{eqnarray*}
    &&\left \|\nabla_{a_i} v_{i,\nu_i^t(\cdot|\theta_i)}^{\text{bp},t}(a_i,f_{-i},\theta_i) - \nabla_{a_i} v_{i,\eta(\cdot|\theta_i;\zeta^*)}^{\text{bp},t}(a_i,f_{-i},\theta_i)
    \right\|\\
    &&\qquad \leq  \left( \bar\omega_i^{\rm bp}\bar g_i  + \bar g_i\lambda_i^{\rm bp}\right)
    \dd_{TV}(\nu_i^t(\cdot\mid\theta_i),\eta(\cdot\mid\theta_i;\zeta^*)).
\end{eqnarray*}

    \item[(ii)] Let $f^\nu$ and $f^{\nu'}$ be two BNE-BPDs corresponding to the posterior profiles $\mu = (\mu_1,\dots, \mu_n)$ and $\mu' = (\mu_1',\dots, \mu_n')$, respectively. Then 
\begin{eqnarray} \label{eqn:theorem-convergence-bpd}
    \|f^{\nu} - f^{\nu'}\|_\infty \leq \max_{i\in N} \sum_{j\in N} \left[(1-\Gamma^{\rm bp})^{-1}\right]_{ij} \bar{d}_j^{\rm bp}
\end{eqnarray}
with
$
    \bar{d}_i^{\rm bp}
    := \frac{1}{M_i} \sup_{\theta_i\in \Theta_i}
    \left\|
\nabla_{a_i} v_{i,\nu_i(\cdot|\theta_i)}^{\text{bp}}(f_i^\nu(\theta_i),f_{-i}^\nu,\theta_i)
-
\nabla_{a_i} v_{i,\nu_i'(\cdot|\theta_i)}^{\text{bp}}(f_i^\nu(\theta_i),f_{-i}^\nu,\theta_i)
\right\|,
$
where $M_i$ is defined as in Assumption~\ref{assump:rabl-common-primitive}.

\item[(iii)] Let $f^{t*}$ denote the BNE-BPD at round $t$, and $f^*$ the BNE under the true underlying distribution $\eta(\cdot;\zeta^*)$. 
If, in addition, Assumptions~\ref{assumption:conv_rate_post_dist}, \ref{assumption:lipschitz_posterior_distribution}, and \ref{assumption:6.5} hold,
then for any $\epsilon\in (0,1)$,
    there exist positive constants $c_1>0$, $c_2(\epsilon)>0$, and $T(\epsilon), \hat{T}(\delta,\epsilon)\geq 2$ such that, for all $t>\max\{T(\epsilon), \hat{T}(\delta,\epsilon)\}$,
    \begin{eqnarray}
        \inmat{Prob}_{\eta(\cdot;\zeta^*)}^{t-1}\left( \|f^{t*} - f^*\|_\infty \leq \Big( \max_{i\in N} \sum_{j\in N} \left[(1-\Gamma^{\rm bp})^{-1}\right]_{ij} D_j^{\rm bp}(\e,\delta) \Big) (t-1)^{-\underline{r}c_1/2} \right) \geq 1- \epsilon, \quad
    \end{eqnarray}
    where
    $D_i^{\rm bp}(\e,\delta):=\frac{2}{M_i}
\left(\bar{\omega}_i^{\rm bp} + \lambda_i^{\rm bp}
        \right) \bar{g}_i \exp(L_{p,i} \delta) 
        \left( L_{\eta,i} \exp(L_{p,i} \delta) + 2 \frac{\bar l}{\delta^2} \right)  
        c_2(\epsilon)$
        and $\underline{r} = \min_{i\in N} r_i$.
\end{itemize}
\end{theorem}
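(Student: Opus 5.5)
The proof mirrors Theorem~\ref{thm:convergence-rate-bne-rabl}, with the nested objective replaced by $v^{\text{bp},t}_{i,\nu_i^t(\cdot|\theta_i)}$.

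\emph{Part (i).} By gradient regularity of $\rho^{\rm bp}$ (Assumption~\ref{assumption:gradient-regular-bp}), both gradients have the representation $\mathbb{E}_P[\omega^{\rm bp}_{i,P}(\theta_{-i})\nabla_{a_i}c_i(a_i,f_{-i}(\theta_{-i}),\theta_i,\theta_{-i})]$, with $P=\nu_i^t(\cdot|\theta_i)$ and $P=\eta(\cdot|\theta_i;\zeta^*)$ respectively. As in \eqref{eqn:proof-gradient-aleatoric-tv}, insert the cross term $\mathbb{E}_{\eta(\cdot|\theta_i;\zeta^*)}[\omega^{\rm bp}_{i,\nu_i^t(\cdot|\theta_i)}\nabla_{a_i}c_i]$ and split the difference into two pieces.
\begin{itemize}
\item[(a)] The change of measure with the weight fixed. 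The integrand is bounded by $\bar\omega_i^{\rm bp}\bar g_i$ (Assumption~\ref{assumption:marginal-cost-bounded}), so the definition \eqref{eqn:tv-definition-2} bounds this piece by $\bar\omega_i^{\rm bp}\bar g_i\,\dd_{TV}$. A factor of $2$ may appear from the normalization, but it is absorbed in the same way as in Lemma~\ref{lemma:gradient-al-rm-tv}.
\item[(b)] The change of weight under the true conditional. Bound $\|\nabla c_i\|\le \bar g_i$ and apply \eqref{eqn:gradient-regular-tvnorm-bp} to get $\bar g_i\lambda_i^{\rm bp}\,\dd_{TV}$.
\end{itemize}
Summing the two pieces gives (i).

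\emph{Part (ii).} Here I repeat the argument of Theorem~\ref{thm:convergence-rate-bne-rabl}(i) verbatim, with $\Psi$ replaced by the BPD best-response operator. First, the BPD analogue of Lemma~\ref{lem:rabl-induced-cross-gradient} holds: Assumption~\ref{assumption:bpd-uniqueness}(a) combined with Assumption~\ref{assump:rabl-contraction}(b) gives the cross-gradient bound $\sum_{j\ne i}K^{\rm bp}_{ij}\|f_j-f_j'\|_\infty$. Strong convexity of the BPD objective follows exactly as in Lemma~\ref{lem:rabl-common-regularity}(ii), using only convexity, monotonicity and cash invariance of $\rho^{\rm bp}$. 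The BPD best response is therefore $\Gamma^{\rm bp}$-Lipschitz componentwise. This gives the triangle decomposition $d\le\tilde d+\Gamma^{\rm bp}d$. Since $r(\Gamma^{\rm bp})<1$, the Neumann series $(I-\Gamma^{\rm bp})^{-1}\ge 0$ yields $d\le (I-\Gamma^{\rm bp})^{-1}\tilde d$. Finally, $\tilde d_i\le\bar d_i^{\rm bp}$ follows from the first-order optimality and strong monotonicity argument of \eqref{eqn:proof-nonexpansive-1}--\eqref{eqn:proof-nonexpansive-6}.

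\emph{Part (iii).} Take $\mu=\mu^t$ and $\mu'_i=\delta_{\zeta^*}$. The BPD induced by $\delta_{\zeta^*}$ is $\eta(\cdot;\zeta^*)$, so $f^{\nu'}=f^*$. Part (ii) together with part (i) then gives
\[
\bar d_i^{\rm bp}\le \frac{1}{M_i}(\bar\omega_i^{\rm bp}+\lambda_i^{\rm bp})\bar g_i\sup_{\theta_i}\dd_{TV}(\nu_i^t(\cdot|\theta_i),\eta(\cdot|\theta_i;\zeta^*)).
\]
Now invoke Lemma~\ref{lemma:6}(ii), which bounds this total variation distance on an event of probability at least $1-\epsilon$, for $t>\max\{T(\epsilon),\hat T(\delta,\epsilon)\}$, by
\[
2e^{L_{p,i}\delta}\bigl(L_{\eta,i}e^{L_{p,i}\delta}+2\bar l/\delta^2\bigr)c_2(\epsilon)(t-1)^{-c_1r_i/2}.
\]
Two details need care here.
\begin{itemize}
\item[(1)] The bound in Lemma~\ref{lemma:6}(ii) must be uniform in $\theta_i$. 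Inspecting its proof, the constants do not depend on $\theta_i$, and the random event is defined only through $\mathbb{E}_{\mu_i^t}\|\zeta-\zeta^*\|^2$, which does not involve $\theta_i$. So the supremum over $\theta_i$ can be taken inside the event.
\item[(2)] The events must be combined across players. I would use Assumption~\ref{assumption:conv_rate_post_dist} per player and either rescale $\epsilon$ by $n$ or, as the statement implicitly does, treat the event as common.
\end{itemize}
Replacing $(t-1)^{-c_1r_i/2}$ by $(t-1)^{-\underline r c_1/2}$, valid since $t-1\ge1$, and multiplying by $(I-\Gamma^{\rm bp})^{-1}$ yields $D_i^{\rm bp}$ and the claim.

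The main obstacle is Part (i), specifically keeping the total variation constants consistent with the paper's normalization. Uniformity in $\theta_i$ and the intersection of the high-probability events in (iii) are the remaining delicate points; the rest is a transcription of the BNE-RABL argument.
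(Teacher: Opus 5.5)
Your proposal is correct and follows the paper's own route, which the paper only sketches. Part (i) repeats the two-term split of Lemma~\ref{lemma:gradient-al-rm-tv}, part (ii) repeats Theorem~\ref{thm:convergence-rate-bne-rabl}(i) with $\Gamma^{\rm bp}$, and part (iii) plugs in Lemma~\ref{lemma:6}(ii), which is exactly where $D_i^{\rm bp}(\epsilon,\delta)$ comes from. The details you flag are left implicit by the paper and your handling of them is sound: the event is indeed independent of $\theta_i$; the cross-player union bound is handled by redefining $c_1$, $c_2(\epsilon)$ and $T(\epsilon)$ at level $\epsilon/n$, which is allowed because the statement quantifies them existentially; and the factor $2$ in (i) is not really absorbed but removed by using the unnormalized convention $\dd_{TV}(P,Q)=\int|p-q|$, which is the one Lemma~\ref{lemma:6}'s proof actually uses.
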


\noindent
\textbf{Proof.}
The proofs follow arguments analogous to those used in Lemma~\ref{lemma:gradient-al-rm-tv} and Theorem~\ref{thm:convergence-rate-bne-rabl}.
We skip the details.
\hfill $\Box$

So far, we have established the non-asymptotic convergence of 
the BNE-RABL in Theorem~\ref{thm:convergence-rate-bne-rabl} and the BNE-BPD in Theorem~\ref{thm:convergence-rate-bne-bpd}.
Both convergence rates are of order $(t-1)^{-\underline{r}c_1/2}$. 
Based on these results, we can quantify the discrepancy between the BNE-RABL and BNE-BPD strategies at any given round $t$; see the following corollary.

\begin{corollary}
\label{corollary:bne-rabl-and-bne-bpd}
Let $f^{t,\rm{RABL}}$ be a BNE-RABL at round $t$, and $f^{t,\rm{BPD}}$ be a BNE-BPD at round $t$.
    Suppose that the settings and conditions of Theorems~\ref{thm:convergence-rate-bne-rabl} and \ref{thm:convergence-rate-bne-bpd} hold.
    Then, for any $\epsilon>0$, there exist positive constants $c_1$, $c_2(\epsilon)$, and $T(\epsilon), \hat{T}(\delta,\epsilon)\geq 2$ such that, for all $t>\max\{T(\epsilon), \hat{T}(\delta,\epsilon)\}$,
    \begin{eqnarray*}
        \inmat{Prob}_{\eta(\cdot;\zeta^*)}^{t-1}\left( \|f^{t,\rm{RABL}} - f^{t,\rm{BPD}}\|_\infty \leq C(\delta,\epsilon) (t-1)^{-\underline{r}c_1/2} \right) \geq 1- \epsilon,
    \end{eqnarray*}
    where 
$
        C(\delta,\epsilon) =  
        \max_{i\in N} \sum_{j\in N} \left[(1-\Gamma)^{-1}\right]_{ij} D_j(\e)
        +
        \max_{i\in N} \sum_{j\in N} \left[(1-\Gamma^{\rm bp})^{-1}\right]_{ij} D_j^{\rm bp}(\e,\delta),
$
    and $D_j(\e)$ and $D_j^{\rm bp}(\e,\delta)$ are defined in Theorems~\ref{thm:convergence-rate-bne-rabl} and \ref{thm:convergence-rate-bne-bpd} respectively.
\end{corollary}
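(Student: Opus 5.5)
The plan is a triangle inequality through the oracle equilibrium $f^*$ under the true distribution $\eta(\cdot;\zeta^*)$, combined with a union bound over the two high-probability events. The preliminary step is to make sure both theorems compare with the same $f^*$. For this I would take $\rho^{\rm bp}=\rho^{\rm al}$, as in Remark~\ref{remark:connection-difference}(i). With that choice, setting $\mu_i=\delta_{\zeta^*}$ makes the BNE-RABL objective reduce to $\rho^{\rm al}_{\eta(\cdot|\theta_i;\zeta^*)}(c_i)$ (since $\rho^{\rm ep}$ is cash invariant on a Dirac measure), while the BNE-BPD objective becomes $\rho^{\rm bp}_{\eta(\cdot|\theta_i;\zeta^*)}(c_i)$. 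Both oracle problems then have the same unique equilibrium $f^*$, unique by Theorems~\ref{thm:rabl-contraction} and \ref{thm:bpd-contraction}.

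The pointwise step is then
\[
\|f^{t,\rm RABL}-f^{t,\rm BPD}\|_\infty\le \|f^{t,\rm RABL}-f^*\|_\infty+\|f^{t,\rm BPD}-f^*\|_\infty .
\]
Let $E_1$ be the event in \eqref{eq:thm6.1-part(ii)}. On $E_1$, the first term is at most $\big(\max_i\sum_j[(1-\Gamma)^{-1}]_{ij}D_j\big)(t-1)^{-\underline r c_1/2}$. Let $E_2$ be the event in Theorem~\ref{thm:convergence-rate-bne-bpd}(iii). On $E_2$, the second term is at most $\big(\max_i\sum_j[(1-\Gamma^{\rm bp})^{-1}]_{ij}D^{\rm bp}_j\big)(t-1)^{-\underline r c_1/2}$. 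Adding the two bounds gives exactly $C(\delta,\epsilon)(t-1)^{-\underline r c_1/2}$ on $E_1\cap E_2$.

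The probability bookkeeping is the main (mild) obstacle. Both events are driven by the same posterior-concentration event from Assumption~\ref{assumption:conv_rate_post_dist}, namely
\[
\mathbb{E}_{\mu_i^t}\|\zeta-\zeta^*\|^2\le c_2(\epsilon)(t-1)^{-c_1}\quad\text{for all } i .
\]
If the bounds in Theorems~\ref{thm:convergence-rate-bne-rabl}(ii) and \ref{thm:convergence-rate-bne-bpd}(iii) are read off deterministically on this event, then $E_1\cap E_2$ contains it. The two theorems also need $t$ beyond their respective thresholds, so I take $t>\max\{T(\epsilon),\hat T(\delta,\epsilon)\}$.

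The remaining subtlety is that the concentration event must hold simultaneously for all $n$ players. Assumption~\ref{assumption:conv_rate_post_dist} is stated per player. I would either apply it with $\epsilon/n$ and absorb the resulting change in $c_2(\cdot)$ into the constants, or note that the proofs of the two theorems already implicitly use the joint event. Should these events not coincide, the fallback is to apply each theorem at level $\epsilon/2$ and use a union bound, giving $\mathrm{Prob}(E_1\cap E_2)\ge 1-\epsilon$. The constants are then $c_2(\epsilon/2)$, which is harmless since the corollary only asserts the existence of some constants $c_2(\epsilon)$.
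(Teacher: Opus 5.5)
Your proposal is correct and is essentially the argument the paper intends (it gives no separate proof). The paper also uses the triangle inequality $\|f^{t,\rm RABL}-f^{t,\rm BPD}\|_\infty\le\|f^{t,\rm RABL}-f^*\|_\infty+\|f^{t,\rm BPD}-f^*\|_\infty$ together with Theorems~\ref{thm:convergence-rate-bne-rabl}(ii) and \ref{thm:convergence-rate-bne-bpd}(iii) on the shared posterior-concentration event, which is why $C(\delta,\epsilon)$ is the sum of the two constants. Your two additions are worthwhile and correct: the paper only implicitly requires $\rho^{\rm bp}=\rho^{\rm al}$ (cf.\ Remark~\ref{remark:connection-difference}(i)) so that both theorems refer to the same oracle $f^*$, and without such a condition the two oracles differ and the bound cannot hold; and your treatment of the joint event across players (or the $\epsilon/2$ union bound with $c_2$ re-indexed) makes precise a probability step the paper glosses over.
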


In particular, 
this corollary
shows that the strategies obtained from the two models can be regarded as effective approximations to each other.
Moreover, the discrepancy vanishes as $t$ grows, and therefore both strategies converge to a common limit, which is the oracle BNE under the true underlying distribution.

\section{Application in the price competition}

\label{sec:app_price_competition}

In this section, we apply 
the established 
Bayesian learning framework 
to
price competition.
In practice, a firm usually has more accurate information about its own current cost than its competitors, which motivates modeling a firm's current cost as private information \citep{ui2016bayesian,liu2025bayesian}. 
These costs may be statistically dependent because they are affected by common input prices and supply chain conditions, and hence it is reasonable to consider a joint distribution of players' costs \citep{lagerlof2024bertrand}.
These firms make pricing decisions over multiple selling periods,
and public disclosures may provide information about 
firms' previously realized production costs \citep{bagnoli2010oligopoly}.
For tractability, we 
therefore assume 
that the realized cost profiles are publicly observed at the end of  each period.
We further consider the situation where 
each firm minimizes the risk associated with its loss in the current period. 
The setting is standard 
in Bayesian learning-in-games models 
to isolate the learning process from 
investment, reputation, collusion, and other 
intertemporal incentives 
\citep{wu2025convergence}.
The stage-wise model is consistent with evidence of the short-horizon competition pressure documented for pricing and operating decisions
where
managers may focus on current performance targets, even at the expense of long-run value \citep{graham2005economic}.

\subsection{The setup}
\label{subsec:price-competition-application}

Consider an oligopolistic market with $n$ differentiated substitutable products,
where each product is offered by a distinct firm.
Firm $i$ chooses a price
$p_i\in\mathcal A_i:=[\underline p_i,\bar p_i],$
and its marginal cost type is
$\theta_i\in\Theta_i:=[\underline\theta_i,\bar\theta_i].$
We assume that the type profile $\theta:=(\theta_1,\ldots,\theta_n)$ is drawn from an unknown joint distribution $\eta(\cdot;\zeta^*).$
Firm $i$’s marginal cost is privately known: it is observed by firm $i$ itself but remains unknown to its competitors at the time of decision-making.
Once this stage of the game is completed, the  cost profile becomes common knowledge to all firms.
The firms learn the joint distribution of marginal costs through the repeated Bayesian learning process specified in Model Specification~\ref{model:repeated-game}.

We use an affine inverse demand function, which is standard in oligopoly models with differentiated products; see \cite{farahat2011comparison,ui2016bayesian}.
For a price vector $p:=(p_1,\ldots,p_n),$
the demand of firm $i$ is
\begin{eqnarray}
\label{eqn:linear-price-demand}
    D_i(p)
    &=&
    A_i-b_i p_i+\sum_{j\neq i}d_{ij}p_j,
\end{eqnarray}
where $b_i$ measures the own-price sensitivity of firm $i$'s demand, 
and $d_{ij}$ measures the substitution effect from firm $j$'s price to firm $i$'s demand.
The parameters $b_i$ and $d_{ij}$ are known to all players.
We assume that the action sets are chosen so that
$D_i(p)\geq0,\forall p\in\mathcal A.$
The firm $i$'s profit is then $(p_i-\theta_i)D_i(p).$
Since our equilibrium model is formulated to minimize the player's loss, 
we define firm $i$'s loss as the negative profit:
\begin{eqnarray}
\label{eqn:linear-price-loss}
    c_i(p_i,p_{-i},\theta_i,\theta_{-i})
    :=
    -(p_i-\theta_i)D_i(p_i,p_{-i}) = -(p_i-\theta_i) \left( A_i-b_i p_i+\sum_{j\neq i}d_{ij}p_j \right).
\end{eqnarray}
This loss function depends on rivals' types through their strategy profile $f_{-i}(\theta_{-i})$ via the inverse demand function.
At round $t$, firm $i$ has a posterior belief $\mu_i^t$ over the unknown parameter $\zeta$.

\subsection{Verification of key assumptions.}
\label{sec:veri-price-competition}

We first verify the conditions on the loss function in Assumption~\ref{basic-assumption}(b),  Assumption~\ref{assump:rabl-common-primitive}(a) and Assumption~\ref{assump:rabl-contraction}(b).

\begin{lemma}
\label{lem:price-competition-constants}
Consider the price competition model \eqref{eqn:linear-price-demand}--\eqref{eqn:linear-price-loss}.
Then the following assertions hold. 
\begin{itemize}
    \item[(i)] The loss function $c_i$ is bounded and uniformly continuous on $\mathcal A\times\Theta$.

    \item[(ii)] For each $i\in N$, $c_i(p_i,p_{-i},\theta_i,\theta_{-i})$ is $M_i$-strongly convex in $p_i$ with $M_i = 2b_i$.

    \item[(iii)] For each $j\neq i$, the blockwise Lipschitz constants in Assumption~\ref{assump:rabl-contraction}(b) can be chosen as $L_{ij}^{c,1} = d_{ij}$, and $L_{ij}^{c,0} = d_{ij} \sup_{p_i \in \mathcal A_i,\theta_i\in\Theta_i} |p_i-\theta_i| \leq d_{ij}(\bar{p}_i - \underline{\theta}_i).$
\end{itemize}

\end{lemma}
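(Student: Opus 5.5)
The plan is to exploit the explicit quadratic structure of the loss \eqref{eqn:linear-price-loss}. Expanding gives
\[
c_i(p_i,p_{-i},\theta_i,\theta_{-i}) = b_i p_i^2 - \Big(A_i + b_i\theta_i + \sum_{j\neq i} d_{ij}p_j\Big)p_i + \theta_i\Big(A_i+\sum_{j\neq i}d_{ij}p_j\Big),
\]
which is a polynomial in $(p,\theta)$. Note that it does not depend on $\theta_{-i}$ except through $p_{-i}=f_{-i}(\theta_{-i})$. All three assertions will be read off from this expression.

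For part (i), I will use the fact that $c_i$ is a polynomial, hence continuous on $\mathcal A\times\Theta$. This set is compact because each $\mathcal A_j=[\underline p_j,\bar p_j]$ and $\Theta_j=[\underline\theta_j,\bar\theta_j]$ is a compact interval. By the Weierstrass theorem, $c_i$ is bounded, so a constant $\bar c_i$ as in \eqref{eqn:c-bound} exists. By the Heine--Cantor theorem, it is uniformly continuous. An explicit bound is also available: $|c_i|\le \max_{p_i,\theta_i}|p_i-\theta_i|\cdot\max_{p}|D_i(p)|$, using $D_i\ge 0$ on $\mathcal A$.

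For part (ii), I will differentiate twice in $p_i$: $\partial_{p_i}c_i = 2b_ip_i - A_i - b_i\theta_i - \sum_{j\neq i}d_{ij}p_j$ and $\partial^2_{p_i}c_i = 2b_i$. Since $c_i$ is exactly quadratic in $p_i$ with leading coefficient $b_i$, the identity
\[
\lambda c_i(p_i)+(1-\lambda)c_i(p_i')-c_i(\lambda p_i+(1-\lambda)p_i') = b_i\lambda(1-\lambda)(p_i-p_i')^2
\]
holds for all $(p_{-i},\theta)$. This is \eqref{eqn:primitive-strong-convexity-cost} with $M_i=2b_i$, uniformly in the remaining arguments, provided $b_i>0$, which I will assume as the natural own-price sensitivity condition.

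For part (iii), I will take differences in $p_{-i}$. For the gradient, $\nabla_{p_i}c_i(p_i,p_{-i}')-\nabla_{p_i}c_i(p_i,p_{-i}) = -\sum_{j\neq i}d_{ij}(p_j'-p_j)$, so $L_{ij}^{c,1}=d_{ij}$ (using $|d_{ij}|$ if signs are not fixed; for substitutes $d_{ij}\ge0$). For the level, $c_i(p_i,p_{-i}')-c_i(p_i,p_{-i}) = -(p_i-\theta_i)\sum_{j\neq i}d_{ij}(p_j'-p_j)$, which is bounded by $\sum_{j\neq i} d_{ij}\sup_{p_i,\theta_i}|p_i-\theta_i|\,|p_j'-p_j|$. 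The final inequality uses $\sup|p_i-\theta_i|=\max\{\bar p_i-\underline\theta_i,\ \bar\theta_i-\underline p_i\}$, which equals $\bar p_i-\underline\theta_i$ under the economically natural assumption that prices lie above costs, i.e.\ $\underline p_i\ge\underline\theta_i$ and $\bar p_i\ge \bar\theta_i$.

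None of these steps is a real obstacle. The only point needing care is that last bound in (iii): it requires this implicit ordering between the price and cost intervals, and I would state it explicitly (or replace the bound by the max expression) instead of leaving it implicit.
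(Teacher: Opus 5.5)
Your proposal is correct and follows essentially the same route as the paper: expand $c_i$ as a quadratic in $p_i$, then use polynomiality on a compact set for (i), read off the curvature $2b_i$ for (ii), and difference the loss and its gradient in $p_{-i}$ for (iii). Your caveat on the last bound in (iii) is well taken. The paper asserts $\sup|p_i-\theta_i|\le\bar p_i-\underline\theta_i$ without comment, but this requires $\bar p_i+\underline p_i\ge\bar\theta_i+\underline\theta_i$; it does hold in the paper's numerical example, where $\mathcal A_i=[1,3]$ and $\Theta_i=[0.5,1.5]$.
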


\noindent
\textbf{Proof.}
\underline{Part (i).} Since $\mathcal A$ and $\Theta$ are compact and $c_i$ is polynomial in $(p_i,p_{-i},\theta_i)$, $c_i$ is bounded and uniformly continuous on
$\mathcal A\times\Theta$.

\underline{Part (ii).} By \eqref{eqn:linear-price-loss}, we obtain
\begin{eqnarray*}
    c_i(p_i,p_{-i},\theta_i,\theta_{-i}) =
    b_i p_i^2 - \left( A_i+\sum_{j\neq i}d_{ij}p_j+b_i\theta_i \right) p_i
    +
    \theta_i \left( A_i+\sum_{j\neq i}d_{ij}p_j \right),
\end{eqnarray*}
which is strongly convex in $p_i$ with $M_i = 2b_i$.

\underline{Part (iii).} Next, 
\begin{eqnarray*}
    \nabla_{p_i}c_i(p_i,p_{-i},\theta_i,\theta_{-i})
    &=& 2b_i p_i - A_i - \sum_{j\neq i}d_{ij}p_j - b_i\theta_i .
\end{eqnarray*}
and thus, for $p_{-i}\neq p_{-i}'$,
\begin{eqnarray*}
\left\| \nabla_{p_i}c_i(p_i,p_{-i},\theta_i,\theta_{-i}) - \nabla_{p_i}c_i(p_i,p_{-i}',\theta_i,\theta_{-i}) \right\| \leq \sum_{j\neq i} d_{ij}|p_j-p_j'|.
\end{eqnarray*}
Thus $L_{ij}^{c,1}=d_{ij}$.
Similarly
\begin{eqnarray*}
\left| c_i(p_i,p_{-i},\theta_i,\theta_{-i}) - c_i(p_i,p_{-i}',\theta_i,\theta_{-i}) \right| 
= d_{ij}|p_i-\theta_i|\,|p_j-p_j'|.
\end{eqnarray*}
Since $\sup_{p_i \in \mathcal A_i,\theta_i\in\Theta_i} |p_i-\theta_i| \leq \bar{p}_i - \underline{\theta}_i$, we arrive at the result.
\hfill $\Box$

We now plug these constants into the contraction matrix to verify the assumptions.
To ease the notation, let 
$ G_i:= \sup_{p\in\mathcal A,\theta_i\in\Theta_i} \left\| \nabla_{p_i}c_i(p_i,p_{-i},\theta_i,\theta_{-i}) \right\|.$
This quantity is finite since $\mathcal A$ and $\Theta_i$ are compact.

\begin{proposition}
\label{cor:price-competition-uniqueness}
Consider the price competition model \eqref{eqn:linear-price-demand}--\eqref{eqn:linear-price-loss}.
Under Assumptions~\ref{assump:rabl-common-primitive}--\ref{assump:rabl-contraction}, the following assertions hold.

\begin{enumerate}[(i)]
\item If both $\rho^{\rm al}$ and $\rho^{\rm ep}$ are expectations, then $\kappa_i^{{\rm al},1}=\kappa_i^{{\rm ep},1}=1,$ and $\kappa_i^{{\rm al},0}=\kappa_i^{{\rm ep},0}=0$, and thus 
$ \Gamma_{ij}= \frac{d_{ij}}{2b_i}, i\neq j.$
In this case, a sufficient condition for uniqueness of the BNE-RABL is 
$$ \max_{i\in N} \frac{\sum_{j\neq i}d_{ij}}{2b_i} <1.$$

\item If $\rho^{\rm ep}$ is the expectation and $\rho^{\rm al}$ is the entropic risk measure defined by 
$$ \rho_{\gamma_i}^{\rm ent}(X) := \frac{1}{\gamma_i}\log\mathbb E[\exp(\gamma_i X)], $$
with $\gamma_i>0$,
then
$ 
\kappa_i^{{\rm ep},1}= 1,
\kappa_i^{{\rm ep},0}= 0,
\kappa_i^{{\rm al},1}= 1,
\kappa_i^{{\rm al},0}= \gamma_iG_i,
$
and
$\Gamma_{ij}=\frac{d_{ij}\left(1+\gamma_iG_i (\bar{p}_i - \underline{\theta}_i)\right)}{2b_i}, i\neq j.$
A sufficient condition for uniqueness is
\[
\max_{i\in N}  \frac{ \left( 1+ \gamma_iG_i(\bar p_i-\underline\theta_i) \right) \sum_{j\neq i}d_{ij}} {2b_i} <1.
\]

\item If $\rho^{\rm ep}$ is the expectation and
$\rho^{\rm al}={\rm CVaR}_{\beta_i}$ with confidence level
$\beta_i\in(0,1)$, and
for every $(p_i, f_{-i}, \theta_i)$ and conditional distribution of $\theta_{-i}$, the density function of the loss distribution induced by
$ c_i\left(p_i, f_{-i}(\theta_{-i}), \theta_i, \theta_{-i}\right)$
is uniformly bounded by $\bar h_i/2$ in a neighborhood of $\beta_i$-quantile,
then
$
    \kappa_i^{{\rm ep},1}=1,
    \kappa_i^{{\rm ep},0}=0,
    \kappa_i^{{\rm al},1}=1,
    \kappa_i^{{\rm al},0} = \frac{2G_i\bar h_i}{1-\beta_i},
$
and
$
    \Gamma_{ij} =
    \frac{ d_{ij} \left( 1+ \frac{2G_i\bar h_i(\bar p_i-\underline\theta_i)} {1-\beta_i} \right)} {2b_i},i\neq j.
$
A sufficient condition for uniqueness is
\[
\max_{i\in N} \frac{ \left( 1+ \frac{2G_i\bar h_i(\bar p_i-\underline\theta_i)} {1-\beta_i} \right) \sum_{j\neq i}d_{ij}} {2b_i} <1.
\]

\end{enumerate}
\end{proposition}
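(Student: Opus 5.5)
The plan is to reduce every case to computing the four constants $\kappa_i^{{\rm al},0},\kappa_i^{{\rm al},1},\kappa_i^{{\rm ep},0},\kappa_i^{{\rm ep},1}$ in Assumption~\ref{assump:rabl-contraction}(c)--(d). We then substitute them, together with $M_i=2b_i$, $L_{ij}^{c,1}=d_{ij}$ and $L_{ij}^{c,0}\le d_{ij}(\bar p_i-\underline\theta_i)$ from Lemma~\ref{lem:price-competition-constants}, into
\[
K_{ij}=\kappa_i^{{\rm ep},1}\bigl(\kappa_i^{{\rm al},1}L_{ij}^{c,1}+\kappa_i^{{\rm al},0}L_{ij}^{c,0}\bigr)+\kappa_i^{{\rm ep},0}L_{ij}^{c,0}.
\]
Finally, we invoke the row-sum bound \eqref{eqn:contraction-condition-sufficient} on the spectral radius, so that Theorem~\ref{thm:rabl-contraction} yields uniqueness.

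Since $\rho^{\rm ep}$ is the expectation in all three cases, we first treat the outer layer. Differentiation under the integral (justified by the uniform gradient bound $G_i$ and the compactness) gives $\nabla_{a_i}\mathbb E_{\mu_i^t}[\cdot]=\mathbb E_{\mu_i^t}[\nabla_{a_i}\cdot]$. Bounding the expectation by the supremum over $\zeta$ gives (d) with $\kappa_i^{{\rm ep},1}=1$ and $\kappa_i^{{\rm ep},0}=0$. Consequently $K_{ij}=d_{ij}+\kappa_i^{{\rm al},0}d_{ij}(\bar p_i-\underline\theta_i)$ in every case. For case (i), the same argument applied to $\rho^{\rm al}$ gives $\kappa_i^{{\rm al},1}=1$ and $\kappa_i^{{\rm al},0}=0$, hence $\Gamma_{ij}=d_{ij}/(2b_i)$.

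For case (ii), we write $X=c_i(a_i,f_{-i}(\theta_{-i}),\theta_i,\theta_{-i})$ and $X'$ for the analogue under $f'_{-i}$. The gradient of the entropic risk is $\mathbb E[w\nabla X]$ with $w=e^{\gamma_iX}/\mathbb E e^{\gamma_iX}$. The difference of the two gradients then splits as
\[
\mathbb E[w(\nabla X-\nabla X')]+\mathbb E[(w-w')\nabla X'].
\]
The first term is at most $\sup\|\nabla X-\nabla X'\|$, since $w\ge0$ and $\mathbb E w=1$; this gives $\kappa^{{\rm al},1}=1$. For the second term, we view $w_s\propto e^{\gamma_i(X'+s(X-X'))}$ along the segment. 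Its derivative in $s$ is $\gamma_i w_s(Z-\mathbb E[w_sZ])$ with $Z=X-X'$, so
\[
\mathbb E|w-w'|\le \gamma_i\,\sup_s\mathbb E_{w_s}|Z-\mathbb E_{w_s}Z|.
\]
The crude bound here is $2\gamma_i\sup|Z|$. To reach the stated constant $\gamma_iG_i$, one must instead write the second term as
\[
\int_0^1\gamma_i\,\mathrm{Cov}_{w_s}(Z,\nabla X')\,ds,
\]
bound the covariance by $\sup|Z|\cdot\mathbb E_{w_s}\|\nabla X'-\mathbb E_{w_s}\nabla X'\|$, and control the latter by $G_i$. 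The intended reading of $G_i$ is as a bound on the deviation, and if the factor of $2$ appears it can be absorbed into the definition of $G_i$. This gives $\kappa_i^{{\rm al},0}=\gamma_iG_i$ and hence the stated $\Gamma_{ij}$.

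For case (iii), the CVaR gradient is $\frac{1}{1-\beta_i}\mathbb E[\mathbb 1_{\{X>q\}}\nabla X]$, where $q$ is the $\beta_i$-quantile of $X$. The difference of gradients splits as
\[
\frac{1}{1-\beta_i}\mathbb E[\mathbb 1_{\{X>q\}}(\nabla X-\nabla X')]+\frac{1}{1-\beta_i}\mathbb E[(\mathbb 1_{\{X>q\}}-\mathbb 1_{\{X'>q'\}})\nabla X'].
\]
The first piece is bounded by $\sup\|\nabla X-\nabla X'\|$ because $P(X>q)=1-\beta_i$. In the second piece, the indicators differ only on an event contained in $\{|X-q|\le |q-q'|+\|Z\|_\infty\}$. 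Monotonicity of quantiles gives $|q-q'|\le\|Z\|_\infty$, so this event lies in a band of width $4\|Z\|_\infty$ around $q$. The density bound $\bar h_i/2$ then gives probability at most $2\bar h_i\|Z\|_\infty$, which yields $\kappa^{{\rm al},0}=2G_i\bar h_i/(1-\beta_i)$. The main obstacle is this band estimate: making the neighborhood argument rigorous requires $\|Z\|_\infty$ to be small relative to the neighborhood on which the density bound holds, or a separate treatment of large perturbations. The entropic covariance constant in case (ii) is a secondary obstacle.
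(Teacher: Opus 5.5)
Your argument is correct and, for the outer expectation layer, case (i), the substitution into $K_{ij}$, the row-sum bound, and the CVaR case (iii), it matches the paper's. In (iii) that means non-expansiveness of VaR, the band $|X-q|\le 2\|Z\|_\infty$ around the quantile, and the density bound $\bar h_i/2$.

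The genuine difference is in case (ii).

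\textbf{The paper's route.} It writes the entropic gradient as $\mathbb{E}_{Q_{f_{-i}}}[\nabla_{p_i}c_i]$ under the exponentially tilted measure and bounds the cross term by $2G_i\,\dd_{\rm TV}(Q_{f_{-i}},Q_{f'_{-i}})$. It then proves the auxiliary Lemma~\ref{lem:tv-bound-exponential-tilting} by bounding the oscillation of $\log(dQ_X/dQ_{X'})$ and optimizing over its essential infimum and supremum. This gives $\dd_{\rm TV}\le\tanh(\gamma\|X-X'\|_\infty/2)\le\gamma\|X-X'\|_\infty/2$.

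\textbf{Your route.} You interpolate the tilt and obtain the exact identity $\mathbb{E}[(w-w')\nabla X']=\int_0^1\gamma_i\,\mathrm{Cov}_{w_s}(Z,\nabla X')\,ds$. This is shorter and bypasses total variation altogether. The paper's route buys the sharper nonlinear $\tanh$ bound, which only matters for large perturbations.

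\textbf{The factor-$2$ hedge is unnecessary.} You also cannot redefine $G_i$, since it is fixed as $\sup\|\nabla_{p_i}c_i\|$. For $Y=\nabla X'$, Jensen gives $\mathbb{E}_{w_s}\|Y-\mathbb{E}_{w_s}Y\|\le(\mathbb{E}_{w_s}\|Y\|^2-\|\mathbb{E}_{w_s}Y\|^2)^{1/2}\le G_i$. Hence $\|\mathrm{Cov}_{w_s}(Z,Y)\|\le\|Z\|_\infty G_i$, and you get exactly $\kappa_i^{{\rm al},0}=\gamma_iG_i$.

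The same bound applied to $Z$ repairs your ``crude'' estimate. Since $\mathbb{E}_{w_s}|Z-\mathbb{E}_{w_s}Z|\le\|Z\|_\infty$, you get $\mathbb{E}_P|w-w'|\le\gamma_i\|Z\|_\infty$. This is precisely the paper's $\dd_{\rm TV}\le\gamma\|Z\|_\infty/2$ without the $\tanh$ refinement.

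\textbf{The band issue in (iii).} The issue you flag is real, but the paper's proof has it too: it applies the density bound on the band $|X-{\rm VaR}|\le2\Delta$ without checking that this band lies in the stated neighborhood. Since $\Delta\le2\bar c_i$ by \eqref{eqn:c-bound}, there are two ways out. You can read the hypothesis as a density bound over the whole range of the loss. Alternatively, for large $\Delta$ you can use the trivial bound $2(1-\beta_i)$ on the probability that the indicators differ, at the cost of a different constant.
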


\noindent
\textbf{Proof.}
By Lemma~\ref{lem:price-competition-constants}, we have
$M_i=2b_i, L_{ij}^{c,1}=d_{ij}, L_{ij}^{c,0}=d_{ij}(\bar{p}_i - \underline{\theta}_i).$
Substituting these constants into the definition of $\Gamma_{ij}$ in Assumption~\ref{assump:rabl-contraction}(e), we have 
\begin{eqnarray*}
    \Gamma_{ij}=\frac{K_{ij}}{M_i}
    = \frac{ d_{ij} \left[ \kappa_i^{{\rm ep},1} \left( \kappa_i^{{\rm al},1} + \kappa_i^{{\rm al},0}(\bar{p}_i - \underline{\theta}_i) \right) + \kappa_i^{{\rm ep},0}(\bar{p}_i - \underline{\theta}_i) \right]} {2b_i}.
\end{eqnarray*}
Since $\Gamma$ is nonnegative, 
$\max_{i\in N}\sum_{j\neq i}\Gamma_{ij}<1$ implies $r(\Gamma)<1$ by Perron-Frobenius theorem.
Therefore, by substituting the corresponding risk-measure constants into this formula, we immediately arrive at the formula for $\Gamma_{ij}$ and the sufficient condition for uniqueness. 

Next, we derive the risk-measure constants 
$\kappa_i^{{\rm ep},1},
\kappa_i^{{\rm ep},0},
\kappa_i^{{\rm al},1},
\kappa_i^{{\rm al},0}$ for $\rho^{\rm ep}$ and $\rho^{\rm al}$ in these cases.
To make the implications transparent, consider the symmetric case where $b_i=b, d_{ij}=d,\bar{p}_i = \bar{p},$ and $\underline{\theta}_i = \underline{\theta}$.

\underline{Part (i).}
When both risk measures are expectations, the gradient of the risk value is the
expectation of the gradient.
Indeed, for any relevant probability measure,
\begin{eqnarray*}
&& \left\| \nabla_{p_i} \mathbb E_{\eta(\cdot\mid\theta_i;\zeta)} \left[ c_i\left( p_i, f_{-i}(\theta_{-i}), \theta_i, \theta_{-i} \right) \right]
-
\nabla_{p_i} \mathbb E_{\eta(\cdot\mid\theta_i;\zeta)} \left[ c_i\left( p_i, f_{-i}'(\theta_{-i}), \theta_i, \theta_{-i} \right) \right] \right\|\\
&=&
\left\| \mathbb E_{\eta(\cdot\mid\theta_i;\zeta)} \left[ \nabla_{p_i} c_i\left( p_i, f_{-i}(\theta_{-i}), \theta_i, \theta_{-i} \right)
-
\nabla_{p_i} c_i\left(  p_i, f_{-i}'(\theta_{-i}), \theta_i, \theta_{-i} \right) \right] \right\|\\
&\leq&
\sup_{\theta_{-i}\in\Theta_{-i}}
\left\| \nabla_{p_i} c_i\left( p_i, f_{-i}(\theta_{-i}), \theta_i,  \theta_{-i} \right) -
\nabla_{p_i} c_i\left( p_i, f_{-i}'(\theta_{-i}),\theta_i, \theta_{-i}\right) \right\|.
\end{eqnarray*}
Thus, the expectation does not amplify the change in the marginal loss and does not
require the loss-level term.
Therefore, we immediately have
$
    \kappa_i^{{\rm al},1} = \kappa_i^{{\rm ep},1} = 1,
    \kappa_i^{{\rm al},0} = \kappa_i^{{\rm ep},0} = 0.
$

\underline{Part (ii).}
When $\rho^{\rm ep}$ is the expectation and $\rho^{\rm al}$ is the entropic risk measure,
we have $\kappa_i^{{\rm ep},1}=1, \kappa_i^{{\rm ep},0}=0$ for expectation as in  part (i).
It remains to derive the constants for the entropic risk measure.
By the definition of the entropic risk measure,
\begin{eqnarray}
&&
\nabla_{p_i}
\rho_{\gamma_i,{\eta(\cdot\mid\theta_i;\zeta)}}^{\rm ent}
\left[c_i\left( p_i, f_{-i}(\theta_{-i}), \theta_i, \theta_{-i}\right)\right] \nonumber \\
&=&
\nabla_{p_i} \left( \frac{1}{\gamma_i}  \log  \mathbb{E}_{\eta(\cdot\mid\theta_i;\zeta)}\left[\exp\left(\gamma_i c_i\left( p_i, f_{-i}(\theta_{-i}), \theta_i, \theta_{-i} \right) \right)\right]  \right)
\nonumber \\
&=&
\frac{
\mathbb E_{\eta(\cdot\mid\theta_i;\zeta)}\left[\exp\left(\gamma_i c_i\left( p_i, f_{-i}(\theta_{-i}), \theta_i, \theta_{-i} \right) \right) \nabla_{p_i} c_i\left( p_i, f_{-i}(\theta_{-i}), \theta_i, \theta_{-i}\right)\right]}
{ \mathbb E_{\eta(\cdot\mid\theta_i;\zeta)} \left[ \exp\left( \gamma_i c_i\left( p_i, f_{-i}(\theta_{-i}), \theta_i, \theta_{-i} \right)\right)\right]}. \label{eqn:proof-kappa-riskmeasure}
\end{eqnarray}
To ease the notation, we denote $P := \eta(\cdot\mid\theta_i;\zeta)$. For fixed $p_i,\theta_i$ and $f_{-i}$, let 
\begin{eqnarray*}
    dQ_{f_{-i}}
    :=
    \frac{ \exp\left( \gamma_i c_i\left( p_i, f_{-i}(\theta_{-i}), \theta_i, \theta_{-i} \right) \right)} 
    {\mathbb E_P\left[\exp\left(\gamma_i c_i\left( p_i, f_{-i}(\theta_{-i}), \theta_i, \theta_{-i} \right) \right)\right]} dP.
\end{eqnarray*}
Since 
\begin{eqnarray*}
    \int_{\Theta_{-i}} dQ_{f_{-i}} =
    \int_{\Theta_{-i}}\frac{ \exp\left( \gamma_i c_i\left( p_i, f_{-i}(\theta_{-i}), \theta_i, \theta_{-i} \right) \right)} 
    {\mathbb E_P\left[\exp\left(\gamma_i c_i\left( p_i, f_{-i}(\theta_{-i}), \theta_i, \theta_{-i} \right) \right)\right]} dP =1,
\end{eqnarray*}
$Q_{f_{-i}}$ is also a probability distribution of $\theta_{-i}$.
By \eqref{eqn:proof-kappa-riskmeasure} and the definition of $Q_{f_{-i}}$, 
we have 
\begin{eqnarray*}
    \nabla_{p_i}
\rho_{\gamma_i,P}^{\rm ent}
\left[c_i\left( p_i, f_{-i}(\theta_{-i}), \theta_i, \theta_{-i}\right)\right] 
= \mathbb{E}_{Q_{f_{-i}}} \left[ \nabla_{p_i} c_i\left( p_i, f_{-i}(\theta_{-i}), \theta_i, \theta_{-i}\right)\right]
\end{eqnarray*}
and similarly 
\begin{eqnarray*}
    \nabla_{p_i}
\rho_{\gamma_i,P}^{\rm ent}
\left[c_i\left( p_i, f_{-i}'(\theta_{-i}), \theta_i, \theta_{-i}\right)\right] 
= \mathbb{E}_{Q_{f_{-i}'}} \left[ \nabla_{p_i} c_i\left( p_i, f_{-i}'(\theta_{-i}), \theta_i, \theta_{-i}\right)\right].
\end{eqnarray*}
Thus, we have 
\begin{eqnarray*}
&&
\left\|
\nabla_{p_i} \rho_{\gamma_i, P}^{\rm ent} \left[ c_i\left( p_i, f_{-i}(\theta_{-i}), \theta_i, \theta_{-i} \right) \right]
-
\nabla_{p_i} \rho_{\gamma_i, P}^{\rm ent} \left[ c_i\left( p_i,  f_{-i}'(\theta_{-i}), \theta_i, \theta_{-i} \right) \right]
\right\| \\
&= &
\left\|
\mathbb{E}_{Q_{f_{-i}}} \left[ \nabla_{p_i} c_i\left( p_i, f_{-i}(\theta_{-i}), \theta_i, \theta_{-i}\right)\right]
-
\mathbb{E}_{Q_{f_{-i}}} \left[ \nabla_{p_i} c_i\left( p_i, f_{-i}'(\theta_{-i}), \theta_i, \theta_{-i}\right)\right]
\right\| \\
& & +
\left\|
\mathbb{E}_{Q_{f_{-i}}} \left[ \nabla_{p_i} c_i\left( p_i, f_{-i}'(\theta_{-i}), \theta_i, \theta_{-i}\right)\right]
-
\mathbb{E}_{Q_{f_{-i}'}} \left[ \nabla_{p_i} c_i\left( p_i, f_{-i}'(\theta_{-i}), \theta_i, \theta_{-i}\right)\right]
\right\| \\
&\leq&
\sup_{\theta_{-i}\in\Theta_{-i}} \left\|\nabla_{p_i} c_i\left( p_i, f_{-i}(\theta_{-i}), \theta_i, \theta_{-i}\right)
-
\nabla_{p_i} c_i\left(p_i, f_{-i}'(\theta_{-i}),\theta_i, \theta_{-i}\right)\right\|\\
&& +2G_i\dd_{\rm TV}\left(Q_{f_{-i}},Q_{f_{-i}'}\right),
\end{eqnarray*}
where the last inequality comes from \eqref{eqn:tv-definition-2} and $G_i:= \sup_{p\in\mathcal A,\theta_i\in\Theta_i} \left\| \nabla_{p_i}c_i(p_i,p_{-i},\theta_i,\theta_{-i}) \right\|.$
By Lemma~\ref{lem:tv-bound-exponential-tilting}, 
$$\dd_{\rm TV}(Q_X,Q_{X'}) \leq\frac{\gamma_i}{2}\sup_{\theta_{-i}\in\Theta_{-i}} \left| c_i\left( a_i, f_{-i}(\theta_{-i}), \theta_i, \theta_{-i} \right)
-
c_i\left( a_i, f_{-i}'(\theta_{-i}), \theta_i, \theta_{-i} \right) \right|.$$
Therefore, we have $\kappa_i^{{\rm al},1}=1$ and $\kappa_i^{{\rm al},0}=\gamma_iG_i .$

\underline{Part (iii).}
Since $\rho^{\rm ep}$ is expectation,
we immediately have
$\kappa_i^{{\rm ep},1}=1,\kappa_i^{{\rm ep},0}=0.$
Then we derive constants for $\rho^{\rm al}={\rm CVaR}_{\beta_i}$.
Fix a relevant probability measure and two rivals' strategy profiles
$f_{-i}$ and $f_{-i}'$.
Let
\[
\Delta
=
\sup_{\theta_{-i}\in\Theta_{-i}}
\left|
c_i\left(p_i,f_{-i}(\theta_{-i}),\theta_i,\theta_{-i}\right)
-
c_i\left(p_i,f_{-i}'(\theta_{-i}),\theta_i,\theta_{-i}\right)
\right|.
\]
By the non-expansive property of monetary risk measures \cite[Lemma 4.3]{follmer2011stochastic},
\[
\left|
{\rm VaR}_{\beta_i}
\left(c_i\left(p_i,f_{-i}(\theta_{-i}),\theta_i,\theta_{-i}\right)\right)
-
{\rm VaR}_{\beta_i}
\left(c_i\left(p_i,f_{-i}'(\theta_{-i}),\theta_i,\theta_{-i}\right)\right)
\right|
\leq
\Delta .
\]
Next, for every $\theta_{-i}\in\Theta_{-i}$, we have
\begin{eqnarray}\label{eqn:proof-diff-of-indicator}
&&
\left|
\mathds{1}_{\left\{
c_i\left(p_i,f_{-i}(\theta_{-i}),\theta_i,\theta_{-i}\right)
\geq
{\rm VaR}_{\beta_i}
\left(c_i\left(p_i,f_{-i}(\theta_{-i}),\theta_i,\theta_{-i}\right)\right)
\right\}}
-
\mathds{1}_{\left\{
c_i\left(p_i,f_{-i}'(\theta_{-i}),\theta_i,\theta_{-i}\right)
\geq
{\rm VaR}_{\beta_i}
\left(c_i\left(p_i,f_{-i}'(\theta_{-i}),\theta_i,\theta_{-i}\right)\right)
\right\}}
\right|
\nonumber\\
&\leq&
\mathds{1}_{\left\{
\left|
c_i\left(p_i,f_{-i}(\theta_{-i}),\theta_i,\theta_{-i}\right)
-
{\rm VaR}_{\beta_i}
\left(c_i\left(p_i,f_{-i}(\theta_{-i}),\theta_i,\theta_{-i}\right)\right)
\right|
\leq
2\Delta
\right\}} .
\end{eqnarray}
Indeed, if
\[
c_i\left(p_i,f_{-i}(\theta_{-i}),\theta_i,\theta_{-i}\right)
-
{\rm VaR}_{\beta_i}
\left(c_i\left(p_i,f_{-i}(\theta_{-i}),\theta_i,\theta_{-i}\right)\right)
>
2\Delta,
\]
then 
\[
c_i\left(p_i,f_{-i}'(\theta_{-i}),\theta_i,\theta_{-i}\right) > {\rm VaR}_{\beta_i}
\left(c_i\left(p_i,f_{-i}'(\theta_{-i}),\theta_i,\theta_{-i}\right)\right).
\]
Similarly, if
\[
c_i\left(p_i,f_{-i}(\theta_{-i}),\theta_i,\theta_{-i}\right)
-
{\rm VaR}_{\beta_i}
\left(c_i\left(p_i,f_{-i}(\theta_{-i}),\theta_i,\theta_{-i}\right)\right)
< -2\Delta,
\]
then 
\[
c_i\left(p_i,f_{-i}'(\theta_{-i}),\theta_i,\theta_{-i}\right) < {\rm VaR}_{\beta_i}
\left(c_i\left(p_i,f_{-i}'(\theta_{-i}),\theta_i,\theta_{-i}\right)\right).
\]
Thus the two indicators in \eqref{eqn:proof-diff-of-indicator} can differ only when $c_i\left(p_i,f_{-i}(\theta_{-i}),\theta_i,\theta_{-i}\right)$ lies within a
$2\Delta$-neighborhood of ${\rm VaR}_{\beta_i}
\left(c_i\left(p_i,f_{-i}(\theta_{-i}),\theta_i,\theta_{-i}\right)\right)$.
Moreover, since the density function induced by
$ c_i\left(p_i, f_{-i}(\theta_{-i}), \theta_i, \theta_{-i}\right)$
is uniformly bounded by $\bar h_i/2$ in a neighborhood of $\beta_i$-quantile, we have
\[
P\left(
\left|
c_i\left(p_i,f_{-i}(\theta_{-i}),\theta_i,\theta_{-i}\right)
-
{\rm VaR}_{\beta_i}
\left(c_i\left(p_i,f_{-i}(\theta_{-i}),\theta_i,\theta_{-i}\right)\right)
\right|
\leq
2\Delta
\right)
\leq
2\bar h_i\Delta .
\]

On the other hand, by the definition of CVaR,
\begin{eqnarray*}
&&
\nabla_{p_i}
{\rm CVaR}_{\beta_i}
\left[
c_i\left(p_i,f_{-i}(\theta_{-i}),\theta_i,\theta_{-i}\right)
\right]\\
&=&
\frac{1}{1-\beta_i}
\mathbb E
\left[
\nabla_{p_i}c_i\left(p_i,f_{-i}(\theta_{-i}),\theta_i,\theta_{-i}\right)
\mathds{1}_{\left\{
c_i\left(p_i,f_{-i}(\theta_{-i}),\theta_i,\theta_{-i}\right)
\geq
{\rm VaR}_{\beta_i}
\left(c_i\left(p_i,f_{-i}(\theta_{-i}),\theta_i,\theta_{-i}\right)\right)
\right\}}
\right].
\end{eqnarray*}
The same formula holds when $f_{-i}$ is replaced by $f_{-i}'$.
Therefore,
\begin{eqnarray*}
&&
\left\|
\nabla_{p_i}
{\rm CVaR}_{\beta_i}
\left[
c_i\left(p_i,f_{-i}(\theta_{-i}),\theta_i,\theta_{-i}\right)
\right]
-
\nabla_{p_i}
{\rm CVaR}_{\beta_i}
\left[
c_i\left(p_i,f_{-i}'(\theta_{-i}),\theta_i,\theta_{-i}\right)
\right]
\right\|\\
&\leq&
\frac{1}{1-\beta_i}
\mathbb E
\left\|
\nabla_{p_i}c_i\left(p_i,f_{-i}(\theta_{-i}),\theta_i,\theta_{-i}\right)
-
\nabla_{p_i}c_i\left(p_i,f_{-i}'(\theta_{-i}),\theta_i,\theta_{-i}\right)
\right\|\\
&&\qquad \qquad
\mathds{1}_{\left\{
c_i\left(p_i,f_{-i}(\theta_{-i}),\theta_i,\theta_{-i}\right)
\geq
{\rm VaR}_{\beta_i}
\left(c_i\left(p_i,f_{-i}(\theta_{-i}),\theta_i,\theta_{-i}\right)\right)
\right\}}
\Bigg]\\
&&\quad + \frac{1}{1-\beta_i}
\mathbb E
\Bigg[
\left\|
\nabla_{p_i}c_i\left(p_i,f_{-i}(\theta_{-i}),\theta_i,\theta_{-i}\right)
\right\| 
\left|
\mathds{1}_{\left\{
c_i\left(p_i,f_{-i}(\theta_{-i}),\theta_i,\theta_{-i}\right)
\geq
{\rm VaR}_{\beta_i}
\left(c_i\left(p_i,f_{-i}(\theta_{-i}),\theta_i,\theta_{-i}\right)\right)
\right\}}\right.\\
&&\quad \quad \left.-
\mathds{1}_{\left\{
c_i\left(p_i,f_{-i}'(\theta_{-i}),\theta_i,\theta_{-i}\right)
\geq
{\rm VaR}_{\beta_i}
\left(c_i\left(p_i,f_{-i}'(\theta_{-i}),\theta_i,\theta_{-i}\right)\right)
\right\}}
\right|
\Bigg]\\
&\leq&
\mathbb E
\Bigg[
\left\|
\nabla_{p_i}c_i\left(p_i,f_{-i}(\theta_{-i}),\theta_i,\theta_{-i}\right)
-
\nabla_{p_i}c_i\left(p_i,f_{-i}'(\theta_{-i}),\theta_i,\theta_{-i}\right)
\right\|
\Bigg]\\
&&\quad+
\frac{G_i}{1-\beta_i}
P\left(
\left|
c_i\left(p_i,f_{-i}(\theta_{-i}),\theta_i,\theta_{-i}\right)
-
{\rm VaR}_{\beta_i}
\left(c_i\left(p_i,f_{-i}(\theta_{-i}),\theta_i,\theta_{-i}\right)\right)
\right|
\leq
2\Delta
\right)\\
&\leq&
\sup_{\theta_{-i}\in \Theta_{-i}}
\left\|
\nabla_{p_i}c_i\left(p_i,f_{-i}(\theta_{-i}),\theta_i,\theta_{-i}\right)
-
\nabla_{p_i}c_i\left(p_i,f_{-i}'(\theta_{-i}),\theta_i,\theta_{-i}\right)
\right\| + \frac{2G_i\bar h_i}{1-\beta_i}\Delta.
\end{eqnarray*}
Therefore, we may take
$
    \kappa_i^{{\rm al},1}=1,
    \kappa_i^{{\rm al},0}=\frac{2G_i\bar h_i}{1-\beta_i}.
$
\hfill $\Box$

Proposition~\ref{cor:price-competition-uniqueness} indicates 
how the uniqueness of the risk-averse BNE depends on the inter-player effect and player's own risk attitude.
In the risk-neutral case, the coefficient is given by
$ \Gamma_{ij}=\frac{d_{ij}}{2b_i},$
which is exactly the result in \cite{su2025existence}.
A larger $b_i$ indicates a stronger own-price effect on %
the residual 
demand for firm $i$,
and 
a larger $d_{ij}$ indicates a stronger cross-price effect of firm $j$'s price on the residual demand.
Thus, the condition
$
    \max_{i\in N}\frac{\sum_{j\neq i}d_{ij}}{2b_i}<1
$
requires that  the aggregate cross-price effects from rivals is dominated by twice of the firm's own-price effect. 
Under the risk measures, the corresponding amplification factors increase with risk aversion, which makes the sufficient condition for uniqueness more restrictive.
Under the entropic risk measure, the cross-price effect is multiplied by
$1+\gamma_iG_i(\bar p_i-\underline\theta_i).$
The parameter $\gamma_i$ measures firm $i$'s degree of entropic risk aversion,
$G_i$ bounds the sensitivity of firm $i$'s marginal loss with respect to its own price, and $\bar p_i-\underline\theta_i$ bounds the largest possible price-cost {\color{blue}} gap. 
Therefore, stronger risk aversion, steeper marginal losses, or a larger feasible price-cost range all amplify the strategic effect of rivals' prices. 
As $\gamma_i$ decreases to zero, the amplification factor approaches one, and the risk-neutral condition is recovered.
By contrast, as $\gamma_i\rightarrow +\infty$ the entropic risk measure approaches the essential supremum, and $\Gamma_{ij}$ diverges.
In this case, the uniqueness is no longer guaranteed by theory.
Similarly,
under CVaR, the cross-price effect is multiplied by
$1+\frac{2G_i\bar h_i(\bar p_i-\underline\theta_i)}{1-\beta_i}.$
This factor has a similar interpretation, and it is driven by tail-risk sensitivity. 
A higher confidence level $\beta_i$ places more weight on extreme losses and increases the amplification factor. 
The constant $\bar h_i$ measures how concentrated the loss distribution is around its VaR level. 
Hence, a high confidence level or a high local density around the VaR level makes uniqueness harder to establish.
Consequently, equilibrium uniqueness is more likely in markets where the firm's own price has a stronger effect on its residual demand, products are less substitutable, and the players are less risk averse.

\subsection{Numerical tests with a linear inverse demand function}
\label{sec:numerical_test}

We perform numerical tests on a two-firm price-competition game to examine non-asymptotic  convergence of BNE-RABL and BNE-BPD 
as the number of games increases,
and to compare the CPU times.
The experiments are carried out on a MacBook Air (Apple M4 chip, 16 GB RAM) using GAMS and the extended mathematical programming framework developed in \cite{kim2019solving}, 
which is a widely used software for solving
optimization and game-theoretic problems.

Consider the setting described in Section~\ref{subsec:price-competition-application}.
We assume that
each firm \(i\in\{1,2\}\) observes its own marginal cost (type) \(\theta_i\in\Theta_i:=[0.5,1.5]\) and then sets a price (action) \(p_i\in\mathcal A:=[1,3]\).
For \(i\in\{1,2\}\),
the loss function in \eqref{eqn:linear-price-loss} is given by
\begin{eqnarray}
    c_i(p_i,p_{-i},\theta_i,\theta_{-i})
:=
-(p_i-\theta_{-i})\left(3.8-1.5p_i+p_{-i}\right).
\label{eqn:cost-linear}
\end{eqnarray}
We discretize firm \(i\)'s type space using \(K\) equally spaced grid points with
$
\theta_i^k
=
0.5+\frac{k-1}{K-1}$ for $
k=1,\ldots,K,
$
and restrict its response function \(f_i\) to the set
 \(\mathcal F_{iK}\)
of piecewise-linear functions
with breakpoints\footnote{We use terminologies ``breakpoints'' and ``grid points'' interchangeably depending on the context.
The former is used in piecewise-linear function and the latter is used in discretization of distributions.
} 
$\theta_i^k, k= 1,\dots, K$.
Let $p_{ik}= f_i(\theta_i^k)$.
Then player $i$ chooses optimal 
price $p_{ik}$ when its marginal price is $\theta_i^t$.



We assume that the joint distribution of 
$\theta = (\theta_1,\theta_2)$,
denoted by \(\eta(\cdot;\zeta)\), 
is a bivariate Gaussian distribution with identical marginal mean value $\zeta$ and standard deviation \(\sigma=0.18\),
and correlation coefficient \(\rho=0.5\), truncated to \(\Theta_1\times\Theta_2\).
In this setup, the joint distribution of $\theta$ is parameterized by 
$\zeta$. The true value of $\zeta$ is 
\(\zeta^*=1\).
The two firms observe the same sequence of joint type profiles at the end of games,
use heterogeneous truncated Gaussian priors for $\zeta$,
\[
\mu_1^1
=
\mathcal N(0.65,0.15^2)\big|_{[0.5,1.5]},
\qquad
\mu_2^1
=
\mathcal N(1.35,0.15^2)\big|_{[0.5,1.5]}.
\]
After observing \(T\) profiles, firm \(i\)'s posterior is updated with
$
\mu_i^{T+1}(d\zeta)
\propto
\mu_i^1(d\zeta)
\prod_{s=1}^{T}p(\theta^s;\zeta).
$

We first consider the BNE-RABL model.
Suppose that firm $i$ evaluates epistemic uncertainty using the expectation and aleatoric uncertainty using the entropic risk measure (\(\gamma=0.1\)):
\begin{eqnarray}
p_{ik}^{T,\mathrm{RABL}}
\in
\arg\min_{p_i\in[1,3]}
\mathbb E_{\mu_i^{T+1}}
\left[
\rho_{\eta(\cdot\mid\theta_i^k;\zeta)}^{\mathrm{ent}}
\left(
c_i(p_i,p_{-i}^{T,\mathrm{RABL}},\theta_i^k,\theta_{-i})
\right)
\right],\quad \text{for } i=1,2,\; k=1,\ldots,K.\quad
\end{eqnarray}
Next, we consider the BNE-BPD model where firm $i$ 
evaluates
interim uncertainty in its rival's type using the entropic risk measure (\(\gamma=0.1\)):
\begin{eqnarray}
p_{ik}^{T,\mathrm{BPD}}
\in
\arg\min_{p_i\in[1,3]}
\rho_{\nu_i^{T+1}(\cdot\mid\theta_i^k)}^{\mathrm{ent}}
\left(
c_i(p_i,p_{-i}^{T,\mathrm{BPD}},\theta_i^k,\theta_{-i})
\right)
\quad \text{for } i=1,2,
\; k=1,\ldots,K.
\end{eqnarray}
As a benchmark, we compute the oracle BNE where both firms know the true parameter \(\zeta^*=1\) and use the entropic risk measure with \(\gamma=0.1\) to evaluate uncertainty in their rivals' types:
\begin{eqnarray}
p_{ik}^*
\in
\arg\min_{p_i\in[1,3]}
\rho_{\eta(\cdot\mid\theta_i^k;\zeta^*)}^{\mathrm{ent}}
\left(
c_i(p_i,p_{-i}^*,\theta_i^k,\theta_{-i})
\right),\quad \text{for } i=1,2,\; k=1,\ldots,K.
\end{eqnarray}

In the first experiment, we examine the convergence of BNE-RABL and BNE-BPD as \(T\) increases.
We fix the number of type-grid points with \(K=31\)
and approximate continuous distributions with discrete
distributions supported at the grid points.
We conduct \(200\) independent simulations  and report the median of each metric together with the \(10\)th--\(90\)th percentile band.
Figure~\ref{fig:experiment-convergence} reports the following metrics at \(T\in\{1,2,5,10,20,50,100,200,500\}\) and Figure~\ref{fig:experiment-strategy-error} reports the deviations of the BNE-RABL and BNE-BPD from the oracle BNE at \(T\in\{1,10,100\}\):
\begin{enumerate} [(i)]
    \item The posterior mean-squared error of $\zeta$:
    \begin{eqnarray}
    P_T
    = 
\max_{i} \mathbb{E}_{\mu_i^{T+1}}\left[\| \zeta - \zeta^*\|^2\right].
    \end{eqnarray}

    \item The response function errors of the BNE-RABL and the BNE-BPD relative to the oracle BNE:
    \begin{eqnarray}
    \max_{i,k}
    \left|
    p_{ik}^{T,\mathrm{RABL}}-p_{ik}^*
    \right|
    \quad\text{and}\quad
    \max_{i,k}
    \left|
    p_{ik}^{T,\mathrm{BPD}}-p_{ik}^*
    \right|.
    \end{eqnarray}

    \item The response function discrepancy between BNE-RABL and BNE-BPD:
    \begin{eqnarray}
    \Delta_T
    :=
    \max_{i,k}
    \left|
    p_{ik}^{T,\mathrm{RABL}}-p_{ik}^{T,\mathrm{BPD}}
    \right|.
    \end{eqnarray}
\end{enumerate}

\begin{figure}
    \centering
    \includegraphics[width=0.9\linewidth]{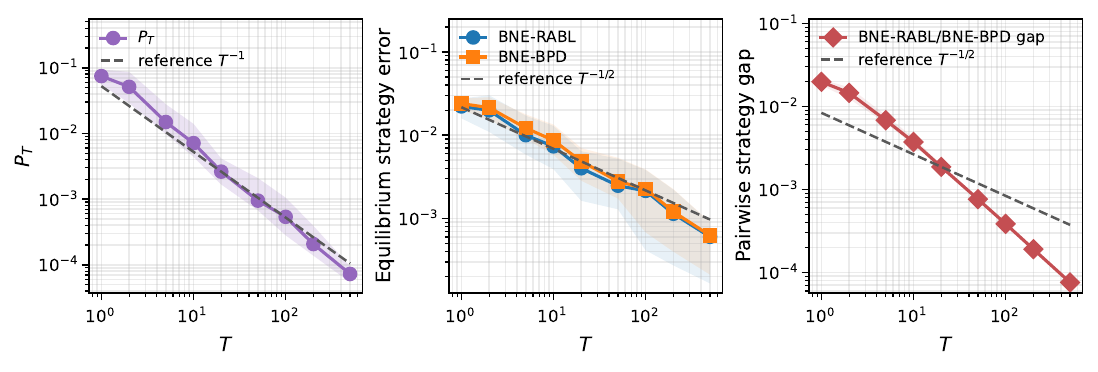}
    \caption{Posterior mean-squared  error of $\zeta$, response function errors relative to the oracle BNE, and the discrepancy between BNE-RABL and BNE-BPD. 
    The dashed lines represent the rates of convergence derived in Theorems~\ref{thm:convergence-rate-bne-rabl} and \ref{thm:convergence-rate-bne-bpd} and Corollary~\ref{corollary:bne-rabl-and-bne-bpd}.
    Specifically, each dashed line is plotted with \(m_{20}(T/20)^\alpha\), where \(\alpha=-1\) in the first \(P_T\) panel, \(\alpha=-1/2\) in the second strategy-error and third pairwise-gap panels, and \(m_{20}\) denotes the corresponding empirical median at \(T=20\).}
    \label{fig:experiment-convergence}
\end{figure}

\begin{figure}
    \centering
    \includegraphics[width=0.9\linewidth]{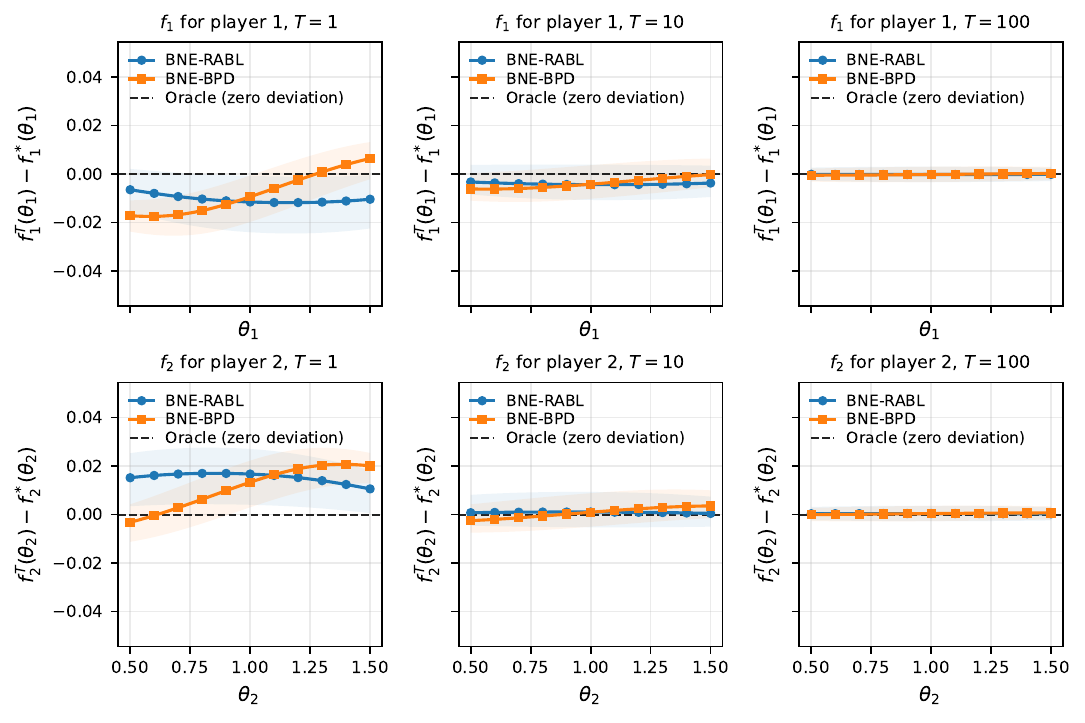}
    \caption{Deviations of the BNE-RABL and BNE-BPD strategies from the oracle strategies at \(T\in\{1,10,100\}\).}
    \label{fig:experiment-strategy-error}
\end{figure}

The results in Figures~\ref{fig:experiment-convergence}-\ref{fig:experiment-strategy-error} are consistent with the convergence results established in Section~\ref{sec:asymptotic-converge}.
In Figure~\ref{fig:experiment-convergence},
the posterior error \(P_T\) decreases from approximately \(7\times10^{-2}\) when \(T=1\) to \(8\times10^{-5}\) when \(T=500\).
The response function errors of both BNE-RABL and BNE-BPD decrease from approximately \(2\times10^{-2}\) to \(8\times10^{-4}\).
The decay of the response function errors is consistent with the convergence rates derived in Theorems~\ref{thm:convergence-rate-bne-rabl} and~\ref{thm:convergence-rate-bne-bpd}.
Moreover, the pairwise gap \(\Delta_T\) decreases from approximately \(2\times10^{-2}\) to \(8\times10^{-5}\), which suggests that the discrepancy between the equilibria generated by two models goes to $0$
faster than the theoretical 
rate
derived in Corollary~\ref{corollary:bne-rabl-and-bne-bpd}.
Likewise, Figure~\ref{fig:experiment-strategy-error} shows that the strategy profiles \(f_i^T(\theta_i)\) 
exhibit visible deviations from the oracle strategy \(f_i^*(\theta_i)\) at \(T=1\), become substantially closer at \(T=10\), and are nearly indistinguishable from the oracle profiles at \(T=100\).
These results 
highlight 
the impact of learning on 
the convergence of the equilibria.

Moreover, Figure~\ref{fig:experiment-strategy-error} shows that
the response functions generated by the two equilibrium models intersect:
the response function in the BNE-BPD model sets a lower price than that in the BNE-RABL model when player $i$'s realized marginal cost $\theta_i^T$ is low, and a higher price when $\theta_i^T$ is large.
This is because the BNE-BPD model uses the realized marginal cost $\theta_i^T$ 
to implicitly reweigh the posterior distribution of $\zeta$
via $\frac{p_i(\theta_i;\zeta)}
{\int_\mathcal{Z} p_i(\theta_i;\zeta)m_i^t(\zeta)d\zeta}$
as shown in \eqref{eqn:bpd-condition}. 
Since $\zeta$ is the mean value of the Gaussian distribution of $\theta_i$,
a smaller value of $\zeta$ assigns a higher density to a low observed value of $\theta_i$ (when $\theta_i<\zeta$);
conversely,
a low observed value of $\theta_i^T$ 
induces a higher likelihood with a smaller value of $\zeta$.
Consequently, a low realized marginal cost $\theta_i^T$ shifts the reweighted posterior in the BNE-BPD model toward small values of $\zeta$.
Since $\zeta$ is also the mean value of the Gaussian distribution of $\theta_{-i}$,
this shift also moves the predicted distribution of the rival's cost $\theta_{-i}$ downward and hence lowers the predicted distribution of the rival's price.
Under strategic complementarity characterized in \eqref{eqn:cost-linear},
player $i$ responds by lowering its own price in order to remain competitive.
Conversely, a larger realization of marginal cost $\theta_i^T$ shifts the reweighted posterior in the BNE-BPD model toward larger values of $\zeta$, which yields a higher predicted distribution of the rival's price and hence a higher own price.


To examine the observations above, we introduce the following two metrics.
\begin{enumerate} [(i)]
\item 
The expected signed deviation of the posterior used in the BNE-RABL model:
\begin{equation}
\mathcal B_{i,T}^{\mathrm{RABL}}(\theta_i)
:=
\int_{\mathcal Z}
(\zeta-\zeta^*)
m_i^{T+1}(\zeta)d\zeta,
\end{equation}
and the expected signed deviation of the reweighted posterior used in the BNE-BPD model:
\begin{equation}
\mathcal B_{i,T}^{\mathrm{BPD}}(\theta_i)
:=
\int_{\mathcal Z}
(\zeta-\zeta^*)
\frac{p_i(\theta_i;\zeta) }{\int_\mathcal{Z}p_i(\theta_i;\zeta) m_i^{T+1}(\zeta)d\zeta}
m_i^{T+1}(\zeta)d\zeta.
\label{eq}
\end{equation}
Since $\mathcal B_{i,T}^{\mathrm{RABL}}(\theta_i)$ is independent of $\theta_i$, it is a constant.

\item Deviation of the conditional expectation of the rival's marginal cost in the BNE-RABL model relative to the oracle counterpart
\begin{align}
\mathcal R_{i,T}^{\mathrm{RABL}}(\theta_i)
&:=
\int_{\Theta_{-i}}
\theta_{-i}
\widetilde q_i^{T+1}
(\theta_{-i}\mid\theta_i)\,d\theta_{-i} -
\int_{\Theta_{-i}}
\theta_{-i}
p(\theta_{-i}\mid\theta_i;\zeta^*)\,d\theta_{-i},
\label{eq:rival-type-deviation-rabl}
\end{align}
and deviation of the conditional expectation of the rival's marginal cost in the BNE-BPD model relative to the oracle counterpart:
\begin{align}
\mathcal R_{i,T}^{\mathrm{BPD}}(\theta_i)
&:=
\int_{\Theta_{-i}}
\theta_{-i}
q_i^{T+1}
(\theta_{-i}\mid\theta_i)\,d\theta_{-i} -
\int_{\Theta_{-i}}
\theta_{-i}
p(\theta_{-i}\mid\theta_i;\zeta^*)\,d\theta_{-i},
\label{eq:rival-type-deviation-bpd}
\end{align}
where $\widetilde q_i^{T+1}$ and $q_i^{T+1}$ are given in \eqref{eqn:rabl-conditional} and \eqref{eqn:bpd-condition}, respectively.

\end{enumerate}

Figures~\ref{fig:parameter_belief_deviations} and \ref{fig:implied_rival_type_deviations} report these metrics for round $T\in\{1,10,100\}$.
When player $i$'s marginal cost $\theta_i$ is low,
the reweighted posterior distribution used in the BNE-BPD model yields lower estimates of $\zeta$ and of rival's marginal cost $\theta_{-i}$ than the standard posterior distribution used in the BNE-RABL model when the player's own marginal cost $\theta_i$ is low;
conversely, when $\theta_i$ is high, the the reweighted posterior yields higher estimates than standard posterior.
The discrepancy between the two models diminishes as $T$ increases. 
These observations support the explanation proposed above for the intersection of the response functions generated by the two models.

\begin{figure}[h]
\centering
\includegraphics[width=0.9\linewidth]
{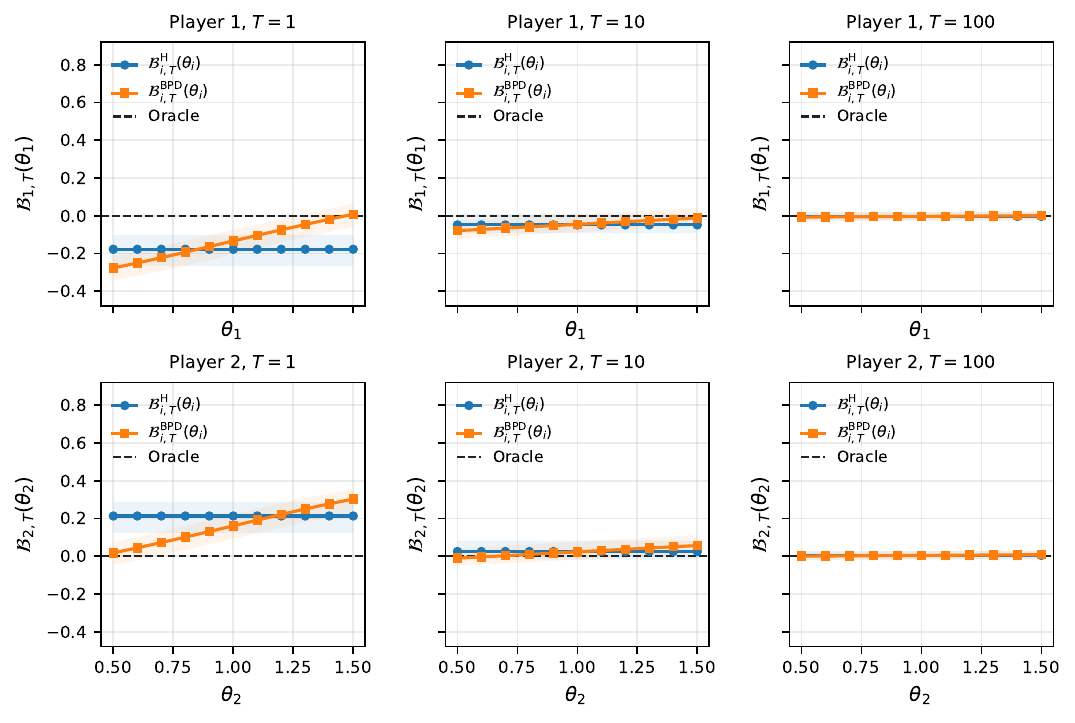}
\caption{$\mathcal B_{i,T}^{\mathrm{RABL}}(\theta_i)$ and $\mathcal B_{i,T}^{\mathrm{BPD}}(\theta_i)$ at round $T\in\{1,10,100\}$.}
\label{fig:parameter_belief_deviations}
\end{figure}

\begin{figure}[h]
\centering
\includegraphics[width=0.9\linewidth]
{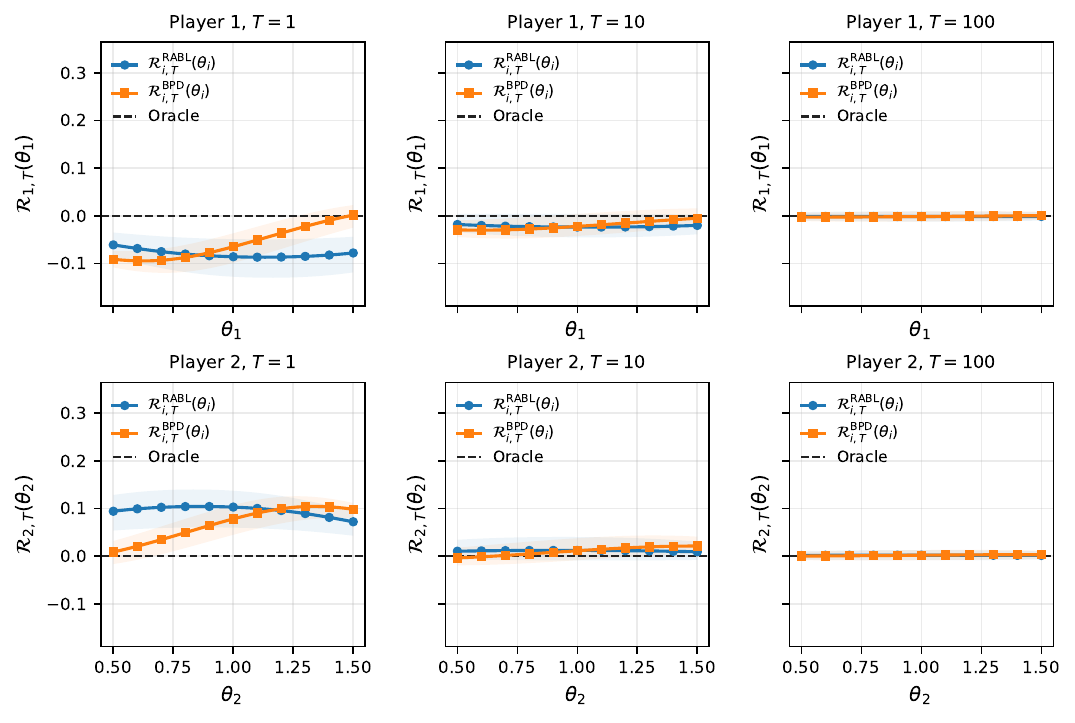}
\caption{
$\mathcal R_{i,T}^{\mathrm{RABL}}(\theta_i)$ and $\mathcal R_{i,T}^{\mathrm{BPD}}(\theta_i)$ at $T\in\{1,10,100\}$.
}
\label{fig:implied_rival_type_deviations}
\end{figure}

In the second experiment, we examine the computational time of the BNE-RABL and BNE-BPD models. 
We fix the round \(T=20\) and vary the number of grid points \(K\) from \(11\) to \(201\) to assess the effect of the discretization size on the CPU time.
The results are reported in Figure~\ref{fig:experiment-solve-time}.
The CPU time for solving the BNE-RABL model increases from approximately \(0.10\) seconds at \(K=11\) to \(18\) seconds at \(K=201\), whereas that for solving the BNE-BPD model increases from approximately \(0.05\) seconds to \(1.4\) seconds.
These results show that the BNE-BPD model is more computationally efficient than the BNE-RABL model, especially for large discretization sizes.

\begin{figure}
    \centering
    \includegraphics[width=0.6\linewidth]{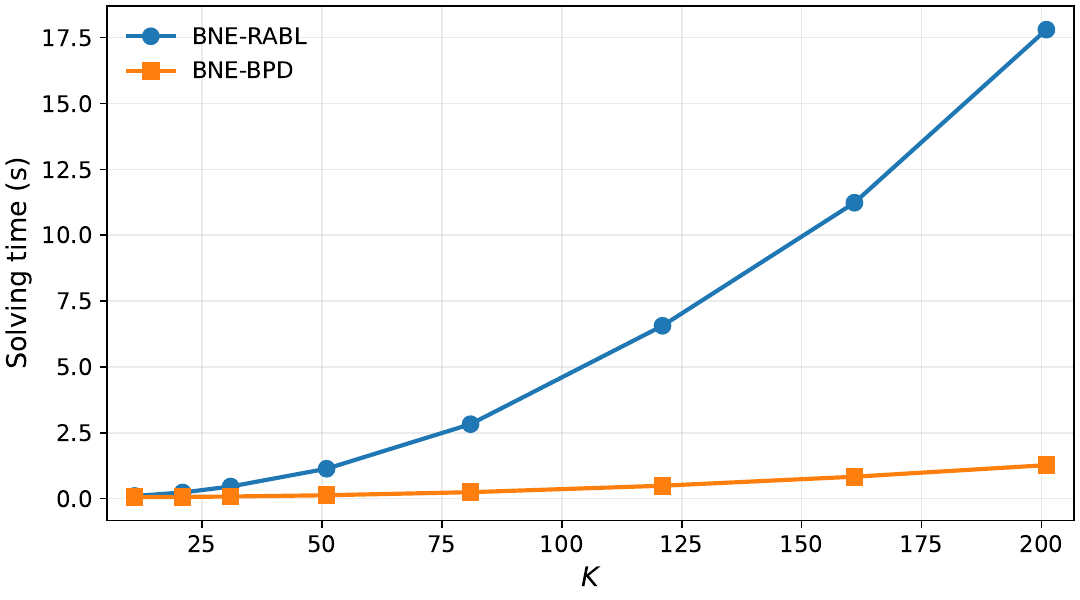}
    \caption{GAMS CPU times for BNE-RABL and BNE-BPD at different discretization sizes $K$.}
    \label{fig:experiment-solve-time}
\end{figure}

\subsection{Application in price competition: numerical tests with multinomial logit demand}

\label{sec:price-competition-mnl}

To further assess the effectiveness of the proposed Bayesian learning framework,
we replace the linear demand function \eqref{eqn:linear-price-demand} used in the preceding experiment
with a multinomial-logit (MNL) demand model, and consider a setting involving two heterogeneous customer groups with distinct preference parameters, indexed by $h\in \{1,2\}$.
We assume that the two customer groups $h=1$ and $h=2$ have proportions $q_1 =0.8$ and $q_2 = 0.2$, respectively.
For customer group $h$, firm $i$'s market share is defined as
\begin{equation}
 D_{hi}(p)
 =
 \frac{\exp(v_{hi}-\beta_{hi}p_i)}
 {1+\sum_{j=1}^{2}\exp(v_{hj}-\beta_{hj}p_j)},
 \label{eq:mnl-robustness-share}
\end{equation}
where
\[
 (v_{1i},\beta_{1i})=(3.0,0.85),\qquad
 (v_{2i},\beta_{2i})=(6.0,3.50).
\]
Firm $i$'s loss function is then given by
\begin{eqnarray}
    c_i(p_i,p_{-i},\theta_i,\theta_{-i})
 &=&-(p_i-\theta_i) \sum_{h=1}^{2}q_h D_{hi}(p) \nonumber\\
 &=& -(p_i-\theta_i) \sum_{h=1}^{2}q_h \frac{\exp(v_{hi}-\beta_{hi}p_i)} 
 {1+\sum_{j=1}^{2}\exp(v_{hj}-\beta_{hj}p_j)}.
 \label{eq:mnl-robustness-loss}
\end{eqnarray}
The information structure, 
Bayesian updating procedure,
firms' risk attitudes,
and the formulations of the BNE-RABL and BNE-BPD models remain the same as in the preceding experiment.

Figures~\ref{fig:experiment3-convergence}--\ref{fig:experiment3-strategy-error} show that 
both the response functions of the BNE-RABL and BNE-BPD models
approach the response functions of the oracle equilibrium, 
and the discrepancy between the equilibria generated by two models goes to $0$ as $T$ increases.
In Figure~\ref{fig:experiment3-solve-time}, the CPU time of solving the BNE-BPD model is significantly shorter than that of the BNE-RABL model, which shows the computational advantage of the BNE-BPD formulation.

\begin{figure}
    \centering
    \includegraphics[width=0.9\linewidth]{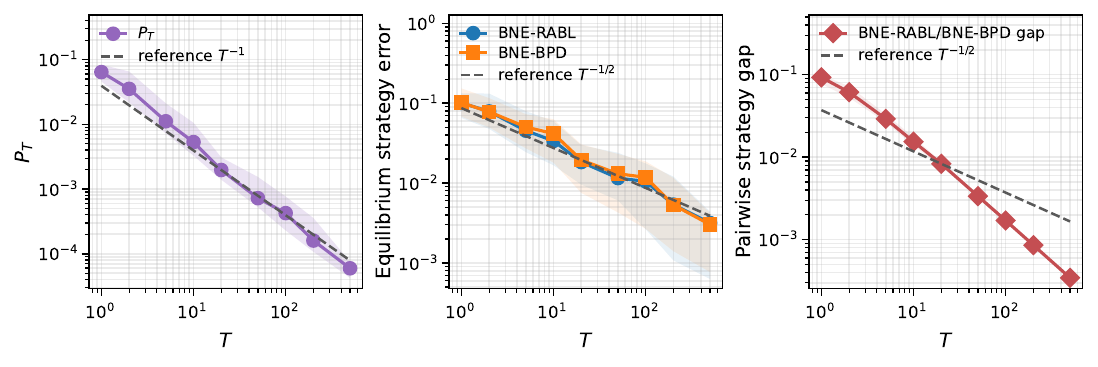}
    \caption{Posterior mean-squared  error of $\zeta$, response function errors relative to the oracle BNE, and the discrepancy between BNE-RABL and BNE-BPD.} 
    \label{fig:experiment3-convergence}
\end{figure}

\begin{figure}
    \centering
    \includegraphics[width=0.9\linewidth]{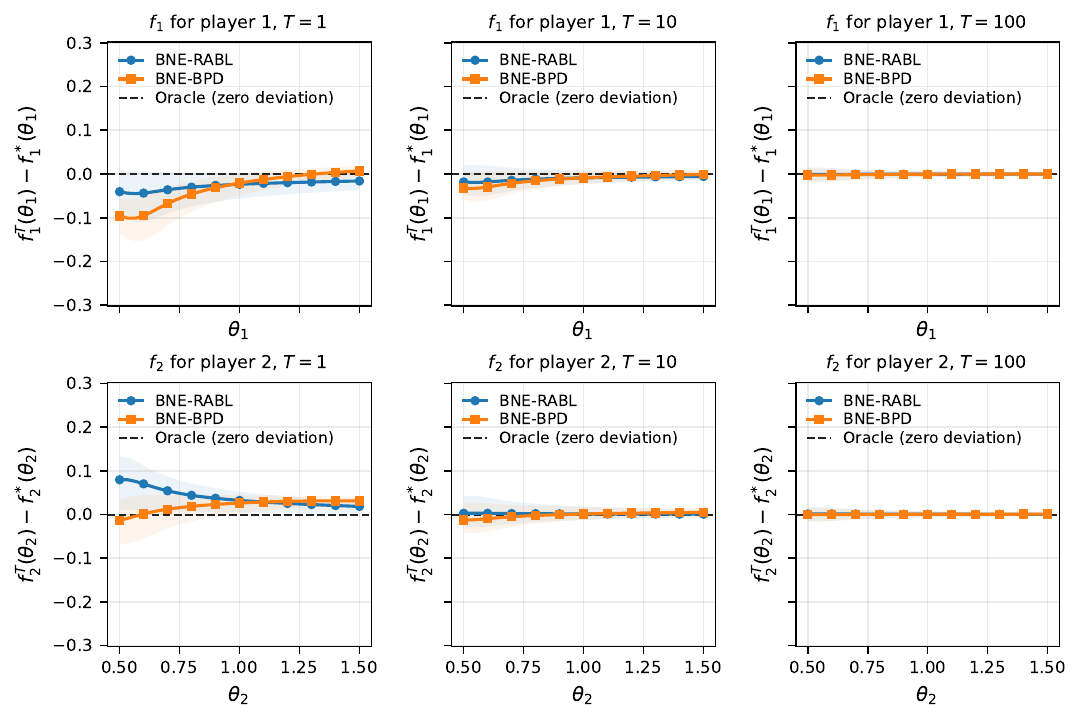}
    \caption{Deviations of the BNE-RABL and BNE-BPD strategies from the oracle strategies at \(T\in\{1,10,100\}\).}
    \label{fig:experiment3-strategy-error}
\end{figure}

\begin{figure}
    \centering
    \includegraphics[width=0.6\linewidth]{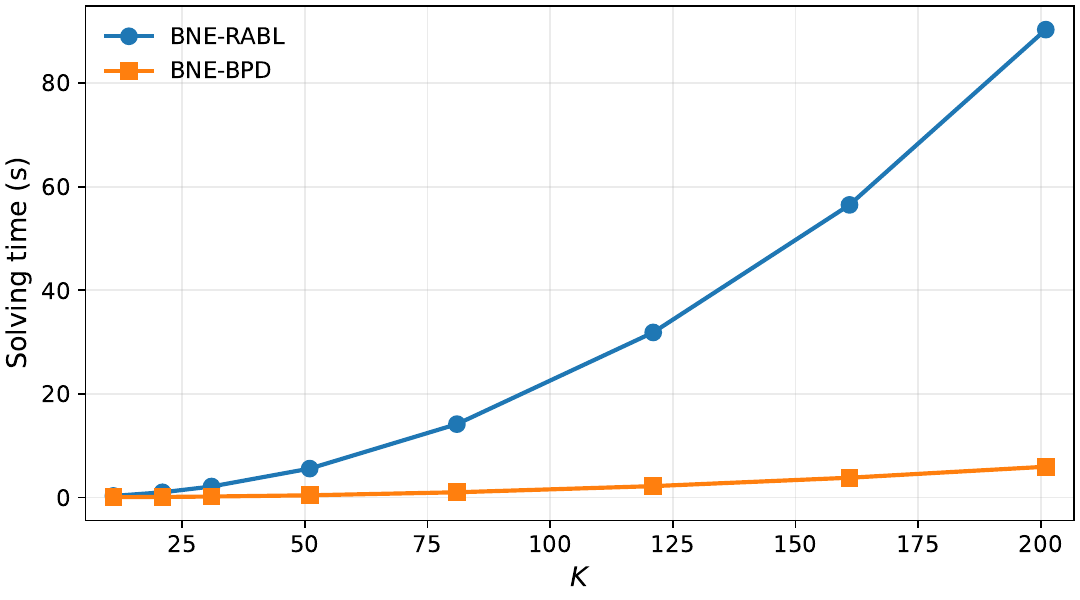}
    \caption{GAMS CPU times for BNE-RABL and BNE-BPD at different discretization sizes $K$.}
    \label{fig:experiment3-solve-time}
\end{figure}

\section{Concluding Remarks}
\label{sec:concluding_remarks}

In this paper, we study Bayesian games in which the true joint distribution of the players' types is initially unknown and must be learned from the type profiles observed over time. 
Within a Bayesian learning framework, we propose two equilibrium models, BNE-RABL and BNE-BPD. The former explicitly distinguishes the players' risk attitudes toward the two sources of uncertainty, namely aleatoric and epistemic uncertainty, whereas the latter aggregates them by marginalizing out the distributional parameter.
Our numerical experiments reveal a trade-off between tractability and modelling fidelity. On the one hand, the BNE-BPD model is substantially easier to solve, since marginalization removes the outer risk functional and thus reduces the problem to a single-level equilibrium condition. 
On the other hand, when the available information about the epistemic uncertainty is limited, 
the 
BNE-BPD encapsulates
a large dispersion across realizations of the distributional parameter $\zeta$, 
which may 
result in 
significant modelling errors. 
Once sufficient information about 
$\zeta$ has been accumulated, this dispersion becomes small, and the BNE-BPD model is obviously a better choice.

It is worth noting that we focus on myopic players in order to study the convergence of equilibrium behavior.
A natural direction for future research is to consider strategic players who maximize a discounted sum of payoffs and to investigate how distribution learning influences the resulting equilibrium behavior; see e.g., \cite{noguchi2015bayesian,norman2022possibility}.
Another promising direction is to develop effective solution approaches based on the ex-ante equilibrium model discussed in Section~\ref{sec:ex-ante} and \cite{liu2026games}.
It might also be interesting to couple the learning process of the unknown distributional parameter with the equilibrium-seeking process; see, e.g., \cite{lei2020asynchronous,dey2026analysis}.
We leave all these for future research.

\begin{appendices}

\section{Auxiliary results}
\begin{lemma}
\label{lem:tv-bound-exponential-tilting}
Let $(\Omega,\mathcal G,P)$ be a probability space.
Let $X$ and $X'$ be two bounded measurable real-valued random variables on
$\Omega$, and let $\gamma>0$.
Define two probability measures $Q_X$ and $Q_{X'}$ by
$$ dQ_X = \frac{\exp(\gamma X)}{\mathbb E_P[\exp(\gamma X)]}dP, \quad dQ_{X'} = \frac{\exp(\gamma X')} {\mathbb E_P[\exp(\gamma X')]} dP. $$
Then, 
\begin{eqnarray*}
    \dd_{\rm TV}(Q_X,Q_{X'})
    \leq
    \frac{\gamma\|X-X'\|_\infty}{2}.
\end{eqnarray*}
\end{lemma}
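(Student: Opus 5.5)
We will prove the bound by interpolating between the two exponential tilts and controlling the derivative of $\mathbb E_{Q}[h]$ along the path. Let $D:=X-X'$ and $M:=\|D\|_\infty$; we may assume $M>0$, since otherwise $Q_X=Q_{X'}$. For $s\in[0,1]$ put $X_s:=X'+sD$, which is bounded, and define
\[
dQ_s:=\frac{\exp(\gamma X_s)}{\mathbb E_P[\exp(\gamma X_s)]}\,dP .
\]
Then $Q_0=Q_{X'}$ and $Q_1=Q_X$. Fix a measurable $h$ with $\|h\|_\infty\le 1$ and set $\phi(s):=\mathbb E_{Q_s}[h]=\mathbb E_P[h e^{\gamma X_s}]/\mathbb E_P[e^{\gamma X_s}]$. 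By the definition \eqref{eqn:tv-definition-2}, it suffices to show $|\phi(1)-\phi(0)|\le \gamma M$; taking the supremum over $h$ and multiplying by $\tfrac12$ then gives $\dd_{\rm TV}(Q_X,Q_{X'})\le \gamma M/2$.

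\emph{Step 1 (differentiation).} The integrands $h e^{\gamma X_s}$ and $e^{\gamma X_s}$ are differentiable in $s$. Their derivatives $\gamma D h e^{\gamma X_s}$ and $\gamma D e^{\gamma X_s}$ are bounded uniformly in $s\in[0,1]$, because $X$, $X'$ and $h$ are bounded. Dominated convergence therefore allows differentiation under $\mathbb E_P$. The quotient rule then gives
\[
\phi'(s)=\gamma\bigl(\mathbb E_{Q_s}[hD]-\mathbb E_{Q_s}[h]\,\mathbb E_{Q_s}[D]\bigr)=\gamma\,\mathbb E_{Q_s}\bigl[h\,(D-\mathbb E_{Q_s}D)\bigr].
\]

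\emph{Step 2 (covariance bound, the key step).} Since $|h|\le1$, we have $|\phi'(s)|\le \gamma\,\mathbb E_{Q_s}|D-\mathbb E_{Q_s}D|$, which is $\gamma$ times the mean absolute deviation of $D$ under $Q_s$. The crude estimate $\mathbb E|D|+|\mathbb E D|\le 2M$ loses a factor $2$ and is therefore not good enough. Instead we use Jensen's inequality and Popoviciu's inequality. The variable $D$ takes values in $[-M,M]$ $Q_s$-a.s., because $Q_s\ll P$. Hence
\[
\mathbb E_{Q_s}|D-\mathbb E_{Q_s}D|\le \sqrt{\mathrm{Var}_{Q_s}(D)}\le \tfrac12\bigl(M-(-M)\bigr)=M .
\]
Therefore $|\phi'(s)|\le\gamma M$ for all $s\in[0,1]$.

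\emph{Step 3 (conclusion).} By the mean value theorem, $|\phi(1)-\phi(0)|\le \gamma M$ uniformly over $\|h\|_\infty\le1$. The definition \eqref{eqn:tv-definition-2} then yields $\dd_{\rm TV}(Q_X,Q_{X'})\le \tfrac12\gamma M=\tfrac{\gamma}{2}\|X-X'\|_\infty$, which is the claimed bound. The only delicate point is Step 2: keeping the sharp constant requires bounding the mean absolute deviation by the half-range $M$ rather than by $2M$.
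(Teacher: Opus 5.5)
Your argument is correct, and it takes a genuinely different route from the paper's.

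The paper works directly with the likelihood ratio \(R=dQ_X/dQ_{X'}\). Since \(\log R=\gamma(X-X')+\mathrm{const}\), the ratio \(\operatorname*{ess\,sup}R/\operatorname*{ess\,inf}R\) is at most \(K:=e^{2\gamma\|X-X'\|_\infty}\). An explicit two-stage optimization, first over \(Q_{X'}(\{R\ge 1\})\) and then over \(m=\operatorname*{ess\,inf}R\), then gives \(\dd_{\rm TV}(Q_X,Q_{X'})\le(\sqrt K-1)/(\sqrt K+1)=\tanh(\gamma\|X-X'\|_\infty/2)\). This is finally relaxed to the linear bound.

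You instead interpolate along the tilts \(Q_s\) and use the identity \(\frac{d}{ds}\mathbb{E}_{Q_s}[h]=\gamma\,\mathrm{Cov}_{Q_s}(h,X-X')\). The proof then reduces to a uniform bound on the mean absolute deviation of \(X-X'\) under \(Q_s\).

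What the paper's route buys is a sharper statement. The \(\tanh\) bound never exceeds \(1\), and it is attained by two-point examples with \(X-X'=\pm\|X-X'\|_\infty\). So it is the best possible bound in terms of \(\|X-X'\|_\infty\) alone, whereas yours delivers only its linearization.

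What your route buys is brevity and flexibility. It needs only dominated convergence, the quotient rule, and Cauchy--Schwarz. Taking \(\sup_h\) inside the integral, it actually yields \(\dd_{\rm TV}(Q_X,Q_{X'})\le\frac{\gamma}{2}\sup_{s\in[0,1]}\sqrt{\mathrm{Var}_{Q_s}(X-X')}\). This can be much smaller than \(\frac{\gamma}{2}\|X-X'\|_\infty\) when \(X-X'\) is concentrated.

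One simplification: for the stated bound you do not need Popoviciu, since \(\mathrm{Var}_{Q_s}(D)\le\mathbb{E}_{Q_s}[D^2]\le M^2\). Popoviciu only helps if you want to replace \(\|X-X'\|_\infty\) by half the oscillation of \(X-X'\). That quantity is invariant under adding constants to \(X\) or \(X'\), as the tilts themselves are.
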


\noindent
\textbf{Proof.}
Let $R = \frac{dQ_X}{dQ_{X'}}.$
By the definition $dQ_X$ and $dQ_{X'}$,
\begin{eqnarray*}
\log R = \log\left( \exp(\gamma(X-X')) \frac{\mathbb E_P[\exp(\gamma X')}{\mathbb E_P[\exp(\gamma X)}\right)= \gamma(X-X') + \log \frac{ \mathbb E_P[\exp(\gamma X')]} {\mathbb E_P[\exp(\gamma X)]} .
\end{eqnarray*}
The second term on the right-hand side is a constant.
Hence
$$
    \operatorname*{ess\,sup}\log R
    -
    \operatorname*{ess\,inf}\log R
    \leq 2\gamma\|X-X'\|_\infty,
$$
and thus
$$
\frac{\operatorname*{ess\,sup} R}{\operatorname*{ess\,inf}R} \leq e^{2\gamma\|X-X'\|_\infty}.
$$
Let
$
    m:=\operatorname*{ess\,inf}R,
    M:=\operatorname*{ess\,sup}R .
$
Then $M/m\leq e^{2\gamma\|X-X'\|_\infty}$.
Moreover, since $\mathbb E_{Q_{X'}}[R]=1$, we have $m\leq1\leq M$ unless
$Q_X=Q_{X'}$.

For any measurable set $A$, we have 
\begin{eqnarray*}
    Q_X(A)-Q_{X'}(A)
    &=&
    \int_A \,dQ_{X} - \int_A \,dQ_{X'} = \int_A R\,dQ_{X'} - \int_A \,dQ_{X'}\\
    &=&
    \mathbb E_{Q_{X'}}[(R-1)\mathbf 1_A].
\end{eqnarray*}
By the definition of total-variation distance,
\begin{eqnarray*}
    \dd_{\rm TV}(Q_X,Q_{X'})=\sup_{A\in \mathcal{G}}|Q_X(A)-Q_{X'}(A)| = \sup_{A\in \mathcal{G}} \mathbb E_{Q_{X'}}[(R-1)\mathbf 1_A],
\end{eqnarray*}
where the supremum is obtained with $A^*=\{R:R-1\geq0\}$.
Since $R\leq M$ on $A^*$ and $R\geq m$ on $(A^*)^c$, we have 
\begin{eqnarray*}
    \mathbb E_{Q_{X'}}[R\mathbf 1_{A^*}]
    &\leq&
    MQ_{X'}(A^*),
    \\
    \mathbb E_{Q_{X'}}[R\mathbf 1_{A^*}]
    &=&
    1-\mathbb E_{Q_{X'}}[R\mathbf 1_{{(A^*)}^c}]
    \leq
    1-m(1-Q_{X'}(A^*)).
\end{eqnarray*}
Therefore
\begin{eqnarray*}
    \dd_{\rm TV}(Q_X,Q_{X'})= \mathbb E_{Q_{X'}}[(R-1)\mathbf 1_{A^*}]
    \leq
    \min\{(M-1)Q_{X'}(A^*),\,(1-m)(1-Q_{X'}(A^*))\}.
\end{eqnarray*}
Since $(M-1)Q_{X'}(A^*)$ is increasing in $Q_{X'}(A^*)\in [0,1]$ and $(1-m)(1-Q_{X'}(A^*))$ is decreasing in $Q_{X'}(A^*)\in [0,1]$, $\min\{(M-1)Q_{X'}(A^*),\,(1-m)(1-Q_{X'}(A^*))\}$ has its maximum at $(M-1)Q_{X'}(A^*) = (1-m)(1-Q_{X'}(A^*))$, which gives 
$Q_{X'}(A^*) = \frac{1-m}{M-m}$ and thus
\begin{eqnarray}
\label{eqn:tv-bound-m-M}
    \dd_{\rm TV}(Q_X,Q_{X'})
    &\leq&
    \frac{(M-1)(1-m)}{M-m}.
\end{eqnarray}
Let $K:=M/m$ and thus $M=Km$. Since $m\leq1\leq M$, we have $m\in[1/K,1]$. 
The right-hand side of \eqref{eqn:tv-bound-m-M} becomes
\begin{eqnarray*}
    F(m,K)
    &:=&
    \frac{(Km-1)(1-m)}{m(K-1)}
    =
    \frac{K+1-Km-1/m}{K-1}.
\end{eqnarray*}
Differentiating with respect to $m$ gives
\begin{eqnarray*}
    \frac{dF(m,K)}{dm}
    &=&
    \frac{-K+1/m^2}{K-1}.
\end{eqnarray*}
Thus $F(\cdot,K)$ is maximized at $m=1/\sqrt K\in[1/K,1]$, and the maximum is
$\frac{\sqrt K-1}{\sqrt K+1}.$
Since this expression is increasing in $K$ and $K=M/m\leq e^{2\gamma\|X-X'\|_\infty}$, we obtain
\begin{eqnarray}
\label{eqn:tv-tanh-bound-general}
    \dd_{\rm TV}(Q_X,Q_{X'})
    &\leq&
    \frac{e^{\gamma\|X-X'\|_\infty}-1}{e^{\gamma\|X-X'\|_\infty}+1}
    = \tanh\left(\frac{\gamma\|X-X'\|_\infty}{2}\right) \leq
    \frac{\gamma\|X-X'\|_\infty}{2}.
\end{eqnarray}
The proof is complete.
\hfill$\Box$
\end{appendices}

\bibliographystyle{apalike}
\bibliography{sample}

\end{document}